\def\fixtikzforbreqn#1#2{%
	\protected\edef#1{\noexpand\ifmmode\mathchar\the\mathcode`#2 \noexpand\else#2\noexpand\fi}%
}
\newtcolorbox{activitybox}[1][]{%
	breakable,
	enhanced,
	colback=lightergray,
	boxrule=3pt,
	arc=5pt,
	outer arc=5pt,
	boxsep=10pt,
	colframe=darkergray,
	coltitle=white,
	#1
}
\newcommand{\quot}[2]{%
	\raise1ex\hbox{$#1$}\Big/\lower1ex\hbox{$#2$}%
}
\newcommand{\colim}{\varinjlim}
\renewcommand{\lim}{\varprojlim}
\def\Rvarlim@#1#2{%
	\vtop{\m@th\ialign{##\cr
			\hfil$#1\operator@font Rlim$\hfil\cr
			\noalign{\nointerlineskip\kern1.5\ex@}#2\cr
			\noalign{\nointerlineskip\kern-\ex@}\cr}}%
}
\def\Rlim{%
	\mathop{\mathpalette\Rvarlim@{\leftarrowfill@\textstyle}}\nmlimits@
}
\newcommand{\expl}[2]{\underset{\mathclap{\minibox[c]{$\uparrow$\\ \fbox{\footnotesize #2}}}}{#1}}
\def\xto#1#2{\xrightarrow{#1}{#2}}
\def\xto#1{\xrightarrow{#1}}
\newcommand{\surj}{\twoheadrightarrow}
\newcommand{\inj}{\hookrightarrow}
\newcommand{\HHom}{\mathcal{H}om}
\newcommand{\EExt}{\mathcal{E}xt}
\newcommand{\et}{\acute{e}t}
\newcommand{\stacksproj}[1]{{\cite[Tag~\href{http://stacks.math.columbia.edu/tag/#1}{#1}]{Stacks_Project}}}
\newcommand{\inc}{\subseteq}
\DeclareMathAlphabet{\mathchanc}{OT1}{pzc}%
                                 {m}{it}
\newcommand{\bunit}{\mathbbm{1}}
\newcommand{\bA}{\mathbb{A}}
\newcommand{\bD}{\mathbb{D}}
\newcommand{\bF}{\mathbb{F}}
\newcommand{\bQ}{\mathbb{Q}}
\newcommand{\bR}{\mathbb{R}}
\newcommand{\bZ}{\mathbb{Z}}
\newcommand{\scr}{\mathcal}
\newcommand{\cA}{\scr{A}}
\newcommand{\cC}{\scr{C}}
\newcommand{\cD}{\scr{D}}
\newcommand{\cF}{\scr{F}}
\newcommand{\cG}{\scr{G}}
\newcommand{\cH}{\scr{H}}
\newcommand{\cI}{\scr{I}}
\newcommand{\cM}{\scr{M}}
\newcommand{\cN}{\scr{N}}
\newcommand{\cO}{\scr{O}}
\newcommand{\cQ}{\scr{Q}}
\newcommand{\cR}{\scr{R}}
\newcommand{\cT}{\scr{T}}
\DeclareMathOperator{\Tr}{Tr}
\DeclareMathOperator{\codim}{codim}
\DeclareMathOperator{\coker}{{coker}}
\DeclareMathOperator{\Ext}{Ext}
\DeclareMathOperator{\Frac}{Frac}
\DeclareMathOperator{\Hom}{Hom}
\DeclareMathOperator{\im}{{im}}
\DeclareMathOperator{\red}{red}
\DeclareMathOperator{\Spec}{{Spec}}
\DeclareMathOperator{\Supp}{{Supp}}
\DeclareMathOperator{\Cone}{{Cone}}
\DeclareMathOperator{\Crys}{Crys}
\DeclareMathOperator{\RGamma}{R\Gamma}
\DeclareMathOperator{\Mod}{Mod}
\DeclareMathOperator{\hol}{hol}
\DeclareMathOperator{\perf}{perf}
\DeclareMathOperator{\QCoh}{QCoh}
\DeclareMathOperator{\QCrys}{QCrys}
\DeclareMathOperator{\Coh}{Coh}
\DeclareMathOperator{\Sol}{Sol}
\DeclareMathOperator{\rig}{rig}
\DeclareMathOperator{\Spf}{Spf}
\DeclareMathOperator{\ShHom}{\mathscr{H}\text{\kern -3pt {\calligra\large om}}\,}
\DeclareFontFamily{OT1}{pzc}{}
\DeclareFontShape{OT1}{pzc}{m}{it}{<-> s * [1.200] pzcmi7t}{}
\DeclareMathAlphabet{\mathpzc}{OT1}{pzc}{m}{it}
	\mathchardef\phialt=\phi
	\mathchardef\phi=\varphi
\newcommand{\factor}[2]{\left. \raise 2pt\hbox{\ensuremath{#1}} \right/
        \hskip -2pt\raise -2pt\hbox{\ensuremath{#2}}}
\renewcommand\subsection{
  \renewcommand{\sfdefault}{pag}
  \@startsection{subsection}%
  {2}{0pt}{.8\baselineskip}{.4\baselineskip}{\raggedright
    \sffamily\itshape\small\bfseries
  }}
\renewcommand\section{
  \renewcommand{\sfdefault}{phv}
  \@startsection{section} %
  {1}{0pt}{\baselineskip}{.8\baselineskip}{\centering
    \sffamily
    \scshape
    \bfseries
}}
\definecolor{gr}{rgb}{0,0.5,0}
\newcommand{\Addresses}{{
		\bigskip
		\footnotesize
		
		\textsc{\'Ecole Polytechnique F\'ed\'erale de Lausanne, SB MATH CAG, MA C3 615 (B\^atiment MA), Station 8, CH-1015 Lausanne, Switzerland}\par\nopagebreak
		\textit{E-mail address}: \texttt{jefferson.baudin@epfl.ch}

}}
\author{Jefferson Baudin}
\date{}
\setlist{  
	listparindent=\parindent,
	parsep=0pt,
}
\subjclass[2020]{14G17, 14F17, 14F30}
\keywords{Grauert--Riemenschneider vanishing theorem, Witt vectors, rigid cohomology}
\title[Grauert--Riemenschndeider vanishing for Witt canonical sheaves]{A Grauert--Riemenschneider vanishing theorem for Witt canonical sheaves}
\begin{document}
	\maketitle
	\begin{abstract}
		We prove a Witt vector version of the usual Grauert--Riemenschneider vanishing theorem over perfect fields of positive characteristic, solving a question raised by Blickle, Esnault, Chatzistamatiou and Rülling. We then deduce some rationality consequences for $F$--rational singularities.
	\end{abstract}

	\tableofcontents

\section{Introduction}

\subsection{Main results}

It is now well--known that many vanishing theorems that hold in characteristic zero fail in positive characteristic. Perhaps the most common example is Kodaira vanishing \cite{Raynaud_Failure_Kodaira_vanishing}. A natural question that one can come up with is then: is there some sort of Kodaira--type vanishing that holds in positive characteristic? There are two main ways to tackle this question:

\begin{enumerate}
	\item Putting restrictions on varieties, e.g. asking that the characteristic of the base field is big enough, working in low dimensions, imposing some ordinarity conditions or focusing on certain special types of varieties (see e.g. \cite{Deligne_Illusie_Relevements_modulo_p_2_et_decomposition_du_complexe_de_de_Rham, Mehta_Ramanathan_Frobenius_splitting_and_cohomology_vanishing_for_Schubert_varieties, Tanaka_KVV_for_toric_varieties,  Gongyo_Takagi_Kollar_injectivity_for_globally_F_regular_varieties, Ekedahl_Canonical_models_of_surfaces_of_general_type_in_positive_characteristic, Di_Cerbo_Fanelli_Effective_Matsusaka_theorem_for_surfaces_in_char_p, Kawakami_On_the_KVV_for_log_CY_in_large_char, Bernasconi_Martin_Bounding_geom_integral_del_Pezzo_surfaces, Arvidsson_Bernasconi_Lacini_KVV_for_log_dP_surfaces_in_pos_char, Kawakami_Tanaka_Vanishing_Theorems_for_Fano_threefolds_in_positive_characteristic, Bernasconi_Kollar_vanishing_theorems_for_threefolds_in_char_>5,  Baudin_Bernasconi_Kawakami_Frobenius_GR_fails}).
	
	\item Working in greater generality (maybe even any characteristic and dimension), but weakening the vanishing statement we are looking for (see e.g. \cite{Esnault_Varieties_over_a_finite_field_with_trivial_Chow_group_of_0_cycles_have_a_rational_point, Berthelot_Bloch_Esnault_On_Witt_vector_cohomology_for_singular_varieties, Gongyo_Nakamura_Tanaka_Rational_points_on_log_fano_threefolds_over_a_finite_field, Tanaka_Vanishing_theorems_of_Kodaira_type_for_Witt_canonical_sheaves, Bhatt_Derived_Splinters_In_Positive_Characteristic, Bhatt&Co_Applications_of_perverse_sheaves_in_commutative_algebra, Baudin_Duality_between_perverse_sheaves_and_Cartier_crystals, Hacon_Pat_GV_Characterization_Ordinary_AV, Hacon_Pat_GV_Geom_Theta_Divs}). They often involve the Frobenius morphism or Witt vectors in some way.
\end{enumerate}

In this paper, we will take the second approach and try to find an analogue of \emph{Grauert--Riemenschneider vanishing}, which concerns the last piece of the de Rham complex:

\begin{thm_blank*}[\cite{Grauert_Riemenschneider_vanishing}]
	Let $\pi \colon Y \to X$ be a proper birational morphism of complex varieties, with $Y$ smooth. Then for all $i > 0$, we have \[ R^i\pi_*\omega_Y = 0, \] where $\omega_Y$ denotes the sheaf of top forms on $Y$.
\end{thm_blank*}

This theorem has many important consequences in characteristic zero, such as the rationality of klt singularities, the invariance of rational singularities under small deformations, etc... (\cite{Elkik_Rationalite_des_singularites_canoniques, Elkik_Singularites_rationnelles_et_deformations, Esnault_Viewheg_Two_dimensionaL_quotient_singularities_deform_to_quotient_singularities}). It could hence be fruitful to have a positive characteristic analogue.

As one would expect, this statement fails if we replace complex numbers by a field of positive characteristic (e.g. take a cone over a smooth projective variety violating Kodaira vanishing, see \cite[Proposition 3.13]{Hacon_Kovacs_GV_fails_in_pos_char}). A weakening that came up in \cite{Baudin_Bernasconi_Kawakami_Frobenius_GR_fails} was to ask that, even though the groups $R^i\pi_*\omega_Y$ may not vanish, they are nilpotent under the Cartier operator. Unfortunately, the main result of \emph{loc. cit.} is that even this weakening fails, already for threefolds. \\

Let us now focus on what \emph{could} hold. The Grauert--Riemenschneider vanishing theorem is analytic in nature; it follows from results in Hodge theory and their impact on the de Rham cohomology of complex varieties. The closest we are to such considerations in positive characteristic is certainly not with de Rham cohomology, which is famous for having pathological behaviour. For example, the Hodge--to--de Rham spectral sequence may not degenerate (\cite{Mumford_Pathologies_of_modular_alg_surfaces}), or the ``Betti numbers'' we obtain out of it may not be the correct ones (for example they are not invariant under smooth deformations, \cite{Kothari_Arbitrarily_large_jumps_in_de_Rham_and_Hodge_coh_of_families_in_char_p}). The right candidate seems to be its famous mixed characteristic deformation: \emph{crystalline cohomology}. There is a so--called de Rham--Witt complex, whose cohomology computes crystalline cohomology (\cite{Illusie_Complexe_de_de_Rham_Witt_et_cohomologie_cristalline}), and for example the analogue of the Hodge--to--de Rham spectral sequence degenerates after inverting $p$. Therefore, one might expect that the sheaf of Witt--top forms $W\omega_Y$ could satisfy a GR vanishing type statement (possibly after inverting $p$). This question first appeared implicitly in \cite{Blickle_Esnault_Rational_Singularities_and_rational_points}, and was then studied in \cite{Rulling_Chatzimatiaou_Hodge_Witt_cohomology_and_Witt_rational_singularities} and \cite{Tanaka_Vanishing_theorems_of_Kodaira_type_for_Witt_canonical_sheaves}. \\

The main goal of this paper is to solve it: 

\begin{thm_letter}\label{Witt-GR_intro}
	Let $\pi \colon Y \to X$ be a proper birational morphism of varieties over a perfect field, and assume that $Y$ is smooth. Then for all $i > 0$, we have that \[ R^i\pi_*W\omega_{Y, \bQ} = 0. \]
	
	Furthermore, if either $\dim(Y) \leq 3$, or $X$ is projective and has isolated singularities, then there exists $e > 0$ such that for all $i > 0$ and $n \geq 1$, we have that \[ p^e \cdot R^i\pi_*W_n\omega_Y = 0. \] 
\end{thm_letter}

\noindent The notation $(\cdot)_{\bQ}$ essentially means that we invert $p$ (see \autoref{sec_notations} for a more precise formulation). As kindly remarked by Kay Rülling, we obtain as a corollary a vanishing theorem for the étale sheaves $W_n\omega_{Y, \log}$ and the motivic complexes $\bZ/p^n(d)$, see \autoref{motivic_vanishing}.

\begin{rem_blank}
	A reader used to $W\cO$--rationality might be confused by the phrasing of the statement. For example, given any proper morphism $\pi \colon Y \to X$ and some $i > 0$, then $R^i\pi_*W\cO_{Y, \bQ}$ is zero if and only if each $R^i\pi_*W_n\cO_Y$ is $p^e$--torsion for some fixed $e > 0$ independent of $n \geq 1$ (see \cite[Proposition 3.10]{Patakfalvi_Zdanowicz_Ordinary_varieties_with_trivial_canonical_bundle_are_not_uniruled}). This is \emph{not} the case for the sheaves $W\omega_Y$ and $W_n\omega_Y$. A philosophical reason is that the Cartier operator is an isomorphism on $W\omega_Y$ (see \autoref{Witt_omega_coperfect}), while this is not at all the case for $W_n\omega_Y$, so vanishing between these objects should not relate so easily. See \autoref{name_Q_p_GR} for the precise statement that holds.
\end{rem_blank}

In characteristic zero, the main tool for proving this theorem is through the Kawamata--Viehweg vanishing theorem. We pursue the same approach in our case, so our first step is to prove the following strenghtening of \cite[Corollary 1.2]{Berthelot_Bloch_Esnault_On_Witt_vector_cohomology_for_singular_varieties}. 

\begin{thm_letter}\label{Witt_KVV_intro}
	Let $Y$ be a smooth proper variety over a perfect field, and let $D$ be an effective, big and semi--ample divisor on $Y$. Then for all $i < \dim(Y)$, we have that \[ H^i(Y, WI_{D, \bQ}) = 0. \]
\end{thm_letter}

Let us explain the notation: $D$ is effective, so $\cO_Y(-D)$ is an ideal of $\cO_Y$, which we call $I_D$. Then, we define $WI_D$ to be the ideal of $W\cO_X$ of elements whose components are in $I_D$ (the reason why we require $D$ to be effective is simply to define $I_D$). Note that this is a different sheaf than the one studied in \cite{Tanaka_Vanishing_theorems_of_Kodaira_type_for_Witt_canonical_sheaves}. As kindly suggested by Shunsuke Takagi, our proof can be generalized to show the vanishing of $H^i(Y, WI_{D, \bQ})$ with $i < \kappa(Y, D)$ for effective and semi--ample divisors $D$ on $Y$ (see \autoref{Witt_KVV}). Here, $\kappa(Y, D)$ denotes the Iitaka dimension of $D$.

We also thank Hélène Esnault for suggesting us that using arguments analogous to those of \cite{Berthelot_Esnault_Rulling_Rational_points_over_finite_fields_for_regular_modules_of_algebraic_varieties_of_Hodge_type}, one might be able to show the following: if $(Y, D)$ admits a lift to characteristic zero, then \autoref{Witt_KVV_intro} follows from the usual Kawamata--Viehweg vanishing theorem from characteristic zero.

\subsection{First applications}

\setcounter{theorem}{2}

Our applications in this paper concern singularity theory. In characteristic zero, it follows from Grauert--Riemenschneider vanishing that pseudo--rational singularities are rational, while this is open in positive characteristic. We prove a $\bQ_p$--version of this statement in full generality, and a Witt vector version in dimension up to $3$. We also prove a $\bQ_p$--version of Kov\'acs' characterization of rational singularities in characteristic zero \cite{Kovacs_Characteristion_of_rational_singularities}. This is all contained in \autoref{thm_notions_of_singularities}.

Another application is to show some Cohen--Macaulayness/rationality properties of certain singularities. It seems to be widely believed that $F$--rational singularities are rational, but apart from surfaces and \cite{Rulling_Chatsistamatiaou_Higher_direct_image_in_positive_characteristic, Ishii_Yoshida_On_vanishing_of_higher_direct_images_of_the_structure_sheaf}, we are not aware of any theorem that proves some sort of vanishing in this direction. We show the following:

\begin{thm_letter}
	Let $X$ be a variety over a perfect field with $F$--rational singularities. Then $X$ has $\bQ_p$--rational singularities.
	
	If furthermore $\dim(X) = 3$, then $X$ has both Witt--rational singularities and $\bF_p$--rational singularities (i.e. if $\pi \colon Y \to X$ is a resolution, then the sheaves $R^i\pi_*\cO_Y$ with $i > 0$ are nilpotent under the natural action of the Frobenius).
\end{thm_letter}

We find it particularly interesting that using Witt vector methods, we are able to deduce $\bF_p$--rationality; a notion that does not involve Witt vectors whatsoever. It would be interesting to push these ideas and show actual rationality of threefold $F$--rational singularities.

In fact, this theorem also holds to the recently defined quasi--$F$--rational singularities (see \cite{Quasi_F_splittings_III}). It would also be interesting to apply these techniques to pseudo--rational singularities. \\

Let us move to our last application. It is known that klt threefold singularities are rational in characteristic $p > 5$ (\cite{Bernasconi_Kollar_vanishing_theorems_for_threefolds_in_char_>5}) and both $W\cO$--rational and $\bF_p$--rational in all characteristic (\cite{Hacon_Witaszek_On_the_relative_MMP_for_threefolds_in_low_char, Baudin_Bernasconi_Kawakami_Frobenius_GR_fails}). However, there was no Cohen--Macaulayness statement known in characteristics $p \leq 5$. For example, they are \emph{not} Cohen--Macaulay in general, not even up to nilpotence under the Cartier operator (\cite{Totaro_The_failure_of_KV_and_terminal_singularities_that_are_not_CM, Totaro_Terminal_3folds_that_are_not_CM, Baudin_Bernasconi_Kawakami_Frobenius_GR_fails}). Using Witt--GR vanishing, we are finally able to obtain a Cohen--Macaulayness statement that holds in every characteristic.

\begin{thm_letter}
	Let $X$ be a threefold with klt singularities over a perfect field. Then $X$ is Witt--Cohen--Macaulay (see \autoref{def_Witt-CM_and_so_on}).
\end{thm_letter}

We hope that, analogously to the characteristic zero picture, we will be able to show that klt singularities are Witt--rational and Witt--Cohen--Macaulay in arbitrary dimension (perhaps up to the existence of one log resolution). Taking cones and using the trace formula for crystalline/rigid cohomology, this should show that a klt Fano variety over a finite field always has a rational point. The smooth case was solved in \cite{Esnault_Varieties_over_a_finite_field_with_trivial_Chow_group_of_0_cycles_have_a_rational_point}. It would also be interesting to see whether these vanishing theorems can be used to study deformations of Witt--rational and $\bQ_p$--rational singularities. \\

In \cite{Baudin_On_the_Euler_characteristic_of_weakly_ordinary_varieties_of_maximal_albanese_dimension}, we will apply $\bQ_p$--GR vanishing to study the Euler characteristic of ordinary varieties of maximal Albanese dimension in positive characteristic. 

\subsection{Ideas from the proof}

The key theory to tackle our Witt version of Kawamata--Viehweg vanishing (in short, Witt--KVV) is the use of crystalline cohomology (or more precisely rigid cohomology in our case). The idea is as follows: the cohomology of $WI_D$ on $Y$ is intimately related to the rigid cohomology of $V = Y \setminus D$ (see \cite{Berthelot_Bloch_Esnault_On_Witt_vector_cohomology_for_singular_varieties}). Furthermore by the assumption on $D$, there is a proper birational morphism $V \to U$ to some affine variety $U$. The steps are then the following:

\begin{enumerate}
	\item\label{step_Artin} Prove a rigid cohomology version of Artin vanishing for affine varieties, namely that an affine variety $U$ satisfies that $H^i_{\rig}(U/K) = 0$ for $i > \dim(U)$.
	\item\label{step_transfer_Artin} Given a proper birational morphism $\pi \colon V \to U$ with $U$ affine, transfer the Artin vanishing statement above via $\pi$ to deduce some vanishing on $V$.
	\item\label{step_Hodge} Rephrase this last vanishing into Witt--KVV (this is where smoothness is used).
\end{enumerate}

Step \autoref{step_transfer_Artin} is a consequence of descent results of rigid cohomology (\cite{Tsuzuki_Cohomological_descent_of_rigid_cohomology_for_proper_coverings}), while step \autoref{step_Hodge} is a consequence of the theory developed in \cite{Berthelot_Bloch_Esnault_On_Witt_vector_cohomology_for_singular_varieties} and Poincaré duality for rigid cohomology.

The main tool to complete step \autoref{step_Artin} is the theory of arithmetic $\cD$--modules (see \autoref{section_review_D_modules}). Very briefly, this is a theory of coefficients for rigid cohomology, meaning that there are objects on varieties, for which we can take pushforwards, pullbacks and so on (i.e. a six--functor formalism), and there is a ``constant object'' whose pushforward to the base field gives rigid cohomology. This is analogous to how regular holonomic $\cD$--modules are a theory of coefficients for de Rham cohomology in characteristic zero, or how constructible $l$--adic sheaves are a theory of coefficients for $l$--adic cohomology. For example, the proof of Artin vanishing in étale cohomology from \cite[Theorem 7.2]{Milne_Etale_Cohomology} relies heavily on such a formalism. 

It is also in the setup of arithmetic $\cD$--modules that a version of Artin vanishing was already known, solving the first step (note that if $V$ was smooth, then Artin vanishing is much more classical, since in this case rigid cohomology agrees with Monsky-Washnitzer cohomology). With all these tools, we are able to deduce Witt--KVV. \\

Let us finish with the proof of Witt--GR vanishing. As apparent in the phrasing of \autoref{Witt-GR_intro} and the remark thereafter, there are in fact two distinct vanishings in play. For reasons explained in \autoref{explanation_name}, we decide to name the vanishing of $R^i\pi_*W\omega_{Y, \bQ}$ as $\bQ_p$--GR vanishing, while we reserve the name Witt--GR vanishing for the statement that for some $e > 0$, we have $p^e \cdot R^i\pi_*W_n\omega_Y = 0$ for all $i > 0$ and $n \geq 1$. \\

With this in mind, let us discuss how we can deduce Witt--GR (or $\bQ_p$--GR) from Witt--KVV. In characteristic zero, the analogue is rather straightforward (\cite[Corollary 2.68]{Kollar_Mori_Birational_geometry_of_algebraic_varieties}): take an ample divisor $A$ such that all the sheaves $R^i\pi_*\omega_Y \otimes \cO_X(A)$ are globally generated and have no higher cohomology. Then by the degeneration of the Leray spectral sequence, $H^0(X, R^i\pi_*\omega_Y \otimes \cO_X(A)) = H^i(Y, \omega_Y \otimes \cO_Y(\pi^*A) )$. Since the latter group vanishes for $i > 0$ by KVV, we deduce that $R^i\pi_*\omega_Y = 0$ for all $i > 0$.

In our setup, the proof is significantly more complicated. To mimic the proof, we would want to find an divisor $A$ such that the ``twist'' of all the sheaves $R^i\pi_*W_n\omega_Y$ by 
$A$ have vanishing higher cohomology groups and are globally generated. The issue is that there are infinitely many such sheaves to deal with here, so it is very unclear whether such a divisor exists. This is the reason why we imposed additional assumptions in our theorem for Witt--GR vanishing.

Fortunately, the situation can be handled much better in the context of $\bQ_p$--GR vanishing, and such a divisor $A$ can be found. The point is that $\bQ_p$--GR vanishing is equivalent to the following statement: for some $e > 0$, the Cartier operator on $p^e \cdot R^i\pi_*W_n\omega_Y$ is nilpotent for all $i > 0$ and $n \geq 1$. This means that $\bQ_p$--GR is intimately related to the theory of \emph{Witt--Cartier--crystals} (see \autoref{section_Witt_Cartier_crystals}). A great feature of this theory is that, unlike mere coherent modules, every object in this category has \emph{finite length} (see \cite{Blickle_Bockle_Cartier_modules_finiteness_results, Baudin_Duality_between_Witt_Cartier_crystals_and_perverse_sheaves}). In particular, they are built from their simple subquotients, and there are only finitely many of them (this is very wrong in the category of coherent modules). Since all the simple subquotients of the \emph{crystal} $R^i\pi_*W_n\omega_Y$ (with $i$ fixed) are also simple subquotients of $R^i\pi_*\omega_Y$ (\autoref{everyone_has_the_same_simple_subquot}), there are only finitely many sheaves that we truly have to work with. This is the reason we are able to obtain $\bQ_p$--GR vanishing in full generality. 

\subsection{Acknowledgments}

I am very grateful to Kay Rülling, Hiromu Tanaka and Fabio Bernasconi for reading through a previous draft of this article, and to Bhargav Bhatt for simplifying my former proof of \autoref{iso_in_top_slopes_for_proper_birational_morphisms}. I would also like to thank Hélène Esnault, Pascal Fong, Tatsuro Kawakami, Christopher Lazda, Léo Navarro Chafloque, Zsolt Patakfalvi, Karl Schwede, Shunsuke Takagi and Jakub Witaszek for fruitful conversations related to the content of this article. I would also like to thank the anonymous referee for their careful comments.

Financial support was provided by grant $\#$200020B/192035 from the Swiss National Science Foundation (FNS/SNF), and by grant $\#$804334 from the European Research Council (ERC).

\subsection{Notations}\label{sec_notations}
\begin{itemize}
	\item Throughout, we fix a prime number $p > 0$, and a perfect field $k$ of characteristic $p$.
	
	\item Given a scheme $X$ and $n \geq 1$, $W_n\cO_X$ denotes the ($p$--typical) $n$--truncated Witt vectors, together with its usual operators Frobenius, Verschiebung and restriction operators (respectively denoted $F$, $V$ and $R$). We also write $W\cO_X = \lim W_n\cO_X$, and set $K \coloneqq \Frac(W(k))$.
	
	Furthermore, the pair consisting of the underlying topological space of $X$ and the sheaf of rings $W_n\cO_X$ gives a scheme, which we denote by $W_nX$. If $f \colon X \to Y$ is a morphism of schemes in positive characteristic, then the induced morphism $W_nX \to W_nY$ will still be denoted $f$.
	
	\item A variety is a connected, separated scheme of finite type over $k$.
	
	\item A generically finite (resp. birational) morphism $f \colon Y \to X$ of varieties is a morphism such that for some dense open $U \inc X$, the induced map $f^{-1}(U) \to U$ is finite (resp. an isomorphism). An alteration is a proper, surjective and generically finite morphism of varieties.
	
	\item Given a scheme $X$ and an open subscheme $U$ (resp. a closed subscheme $Z$) of $X$, we denote by $j_U \colon U \inj X$ the corresponding open immersion (resp. $i_Z \colon Z \inj X$ the corresponding closed immersion).

	\item Given a separated morphism of finite type of Noetherian schemes $f \colon X \to Y$, we denote by $f^!$ the upper--schriek functor defined in \stacksproj{0A9Y}.
	\item Given an abelian category $\cA$, we denote by $D(\cA)$ its derived category, and $D^b(\cA)$ denotes the full subcategory consisting of complexes $\cM^{\bullet} \in D(\cA)$ such that $\cH^i(\cM^{\bullet}) = 0$ for $\abs{i} \gg 0$.
	\item Given an abelian category $\cA$, consider the Serre subcategory of objects $M \in \cA$ such that $n \cdot M = 0$ for some $n \geq 1$. The associated quotient category (see \stacksproj{02MS}) is denoted $\cA_{\bQ}$. The image of $M \in \cA$ by $\cA \to \cA_{\bQ}$ is denoted $M_{\bQ}$. \\
	
	Let us make a remark here. Say we work with $\cA = \Mod(W\cO_X)$, and let $\cM \in \cA$. We want to point that there is an important difference between $\cM_{\bQ}$ and $\cM \otimes \bQ$. Namely, $\cM_{\bQ} = 0$ if and only if $\cM$ is killed by some fixed integer, while $\cM \otimes \bQ = 0$ if and only if $\cM$ is the union of sub--modules killed by some integer (but the integer may vary!). If we worked with objects with reasonable finiteness conditions, working in these two setups would be the same. However, for example if $\pi \colon Y \to X$ is a proper birational morphism, it is not known whether the sheaves $R^i\pi_*W\cO_Y$ satisfy some finiteness conditions, so we prefer working with this quotient category instead of twisting by $\bQ$ (see also the discussion in \cite[Section 3.1]{Patakfalvi_Zdanowicz_Ordinary_varieties_with_trivial_canonical_bundle_are_not_uniruled}).
\end{itemize}

\section{Preliminaries on Witt theory}

Here is a general lemma about Witt schemes that we will use throughout, without further mention.

\begin{lem}
	Let $f \colon X \to Y$ be a morphism of Noetherian, $F$--finite $\bF_p$--schemes.
	\begin{itemize}
		\item For all $n \geq 1$, the scheme $W_nX$ is also Noetherian, and the induced Frobenius $F \colon W_nX \to W_nX$ is finite.
		\item If $f$ is of finite type (resp. separated, proper), then so is the induced map $W_nX \to W_nY$.
	\end{itemize}
\end{lem}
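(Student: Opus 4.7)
Both bullets are proved by induction on $n$, with the base case $n=1$ holding by hypothesis (since $W_1 = \id$). The main tool for the inductive step is the short exact sequence of $W_n\cO_X$-modules
\[0 \to F^{n-1}_*\cO_X \xrightarrow{V^{n-1}} W_n\cO_X \xrightarrow{R} W_{n-1}\cO_X \to 0,\]
whose kernel is a square-zero ideal for $n \geq 2$, and which is compatible with the coordinate-wise Frobenius $F = W_n(F_X)$.

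For the first bullet, I would work affine-locally with $X = \Spec A$. By induction $W_{n-1}(A)$ is Noetherian, and $F^{n-1}_*A$ is a finite $A$-module by $F$-finiteness, hence a Noetherian $W_n(A)$-module via $W_n(A) \to A$. The sequence then presents $W_n(A)$ as an extension of two Noetherian $W_n(A)$-modules, so it is a Noetherian ring. The finiteness of the ring-theoretic Frobenius on $W_n(A)$, which coincides with $W_n(F_A)$, follows by the same inductive scheme using the $F$-equivariance of the sequence together with the finiteness of $F_A$.

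For the second bullet I again reduce to the affine case $A \to B$. Lifting generators of $W_{n-1}(B)$ over $W_{n-1}(A)$ (provided by induction) to $b_1, \ldots, b_r \in W_n(B)$, and choosing generators $b'_1, \ldots, b'_s$ of $F^{n-1}_*B$ over $B$ (by $F$-finiteness), a direct computation exploiting the square-zero property of $V^{n-1}(F^{n-1}_*B) = \ker(W_n(B) \twoheadrightarrow W_{n-1}(B))$ yields
\[W_n(B) = W_n(A)[b_1, \ldots, b_r, V^{n-1}(b'_1), \ldots, V^{n-1}(b'_s)],\]
whence finite type; the finite case is analogous (using that $F^{n-1}_*B$ is finitely generated as an $A$-module via the Frobenius-twisted action). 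Separatedness of $W_nf$ follows from surjectivity of the diagonal multiplication map $W_n(B) \otimes_{W_n(A)} W_n(B) \to W_n(B)$, combined with a gluing argument: one verifies that the ring maps on pairwise intersections of affine opens remain surjective after applying $W_n$, by decomposing Witt vectors as $(c_0, c_1, \ldots) = [c_0] + V([c_1]) + \cdots + V^{n-1}([c_{n-1}])$ and using that $V$ is additive.

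The main obstacle is properness: since $W_n$ does not commute with fiber products, universal closedness of $W_nf$ does not follow directly from the topological equality $|W_nX| = |X|$. I would handle this via the valuative criterion. Every DVR admitting a map into $W_nY$ must have residue field and fraction field of characteristic $p$ (since $p^n = 0$ in $W_nY$), so by $W_n$-functoriality such a lifting problem for $W_nf$ produces, after truncation via the closed immersion $Y \hookrightarrow W_nY$, an analogous lifting problem for $f$ itself; properness of $f$ then yields the required lift, which one re-promotes to $W_nX$ using that $W_n$ preserves closed immersions. Alternatively, Chow's lemma reduces to the projective case, and $W_n(\bP^N_A) \to \Spec W_n(A)$ can be verified proper by hand on the standard affine cover.
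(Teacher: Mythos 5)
The paper does not prove this lemma at all: it simply cites \cite[Appendix A.1]{Langer_Zink_De_Rham_Witt_cohomology_for_a_proper_and_smooth_morphism}. Your argument is essentially a reconstruction of the arguments given there: induction on $n$ along the $V$-filtration $0 \to F^{n-1}_*\cO_X \to W_n\cO_X \to W_{n-1}\cO_X \to 0$, with $F$-finiteness controlling the graded piece. The Noetherianity step, the finite-type/finite step (lifting algebra generators and using that the square-zero ideal $V^{n-1}(B)$ is the $W_n(B)$-module $F^{n-1}_*B$ via the first-component projection), and the finiteness of $W_n(F)$ are all correct as sketched. The valuative-criterion route to universal closedness is also sound: a DVR or field mapping to $W_nY$ is reduced, hence factors through $(W_nY)_{\red}\subseteq Y$, so the lifting problem descends to $f$ and the solution is re-promoted along $X\hookrightarrow W_nX$.

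The one step that does not hold up as written is separatedness. Surjectivity of $W_n(C)\otimes_{W_n(A)}W_n(D)\to W_n(E)$ is \emph{not} a formal consequence of surjectivity of $C\otimes_AD\to E$ together with the decomposition $\sum_i V^i[c_i]$ and additivity of $V$: Teichm\"uller representatives are multiplicative but not additive, and the interaction of $V$ with multiplication carries a Frobenius twist, $V^i(x)\cdot y=V^i(x\,F^i(y))$. Consequently the subring generated by the images of $W_n(C)$ and $W_n(D)$ visibly contains elements such as $V^i(c\,d^{p^i})$, but there is no direct way to produce $V^i(cd)$ from them; already for $C=\bF_p[x]$, $D=\bF_p[y]$, $E=\bF_p[x,y]$ and $p>2$ the element $V(xy)$ is not reachable by the manipulations you describe. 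You should either restrict to the situation actually needed for separatedness, where $C\to E$ and $D\to E$ are localizations along open immersions, and invoke the compatibility of $W_n$ with localization ($W_n(C_f)=W_n(C)_{[f]}$, as in \emph{loc.\ cit.}); or, more cleanly, observe that $X\times_YX\to W_nX\times_{W_nY}W_nX$ is a closed immersion cut out by a nilpotent ideal (it is generated by the images of the nilpotent ideals $VW_{n-1}\cO_X$), hence a homeomorphism identifying the image of $\Delta_f$ with that of $\Delta_{W_nf}$; since the diagonal is always an immersion, closedness of its image is inherited from $f$. The same nilpotent-thickening observation, applied after an arbitrary base change $T\to W_nY$, gives universal closedness directly and lets you dispense with the valuative criterion altogether.
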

\begin{proof}
	This is all contained in \cite[Appendix A.1]{Langer_Zink_De_Rham_Witt_cohomology_for_a_proper_and_smooth_morphism}.
\end{proof}

\subsection{Witt--Cartier modules and crystals}\label{section_Witt_Cartier_crystals}

Throughout, $X$ denotes a Noetherian and $F$--finite $\bF_p$-scheme.

\begin{defn}
	\begin{itemize}
		\item A \emph{$W_n$--Cartier module} is a pair ($\cM$, $\theta$), where $\cM$ is a quasi-coherent $W_n\cO_X$-module and $\theta \colon F_*\cM \to \cM$ is a morphism. We call $\theta$ the \emph{structural morphism} of $\cM$. 
		
		\item A morphism of $W_n$--Cartier modules $h \colon (\cM_1, \theta_1) \to (\cM_2, \theta_2)$ is a morphism of underlying $\cO_X$-modules $h \colon \cM_1 \to \cM_2$ making the square
		
		\[ \begin{tikzcd}
			F_*\cM_1 \arrow[rr, "F_*h"] \arrow[d, "\theta_1"'] && F_*\cM_2 \arrow[d, "\theta_2"] \\
			\cM_1 \arrow[rr, "h"']                                     && \cM_2                               
		\end{tikzcd} \]
		commute. The category of $W_n$--Cartier modules is denoted $\QCoh_{W_nX}^C$. The full subcategory of coherent $W_n$--Cartier modules (i.e. which are coherent as $W_n\cO_X$--modules) is denoted $\Coh_{W_nX}^C$.
	\end{itemize}	
\end{defn}

When $n = 1$, we will simply call them \emph{Cartier modules}. \\

\begin{example}
	Arguably the most important example is the canonical $W_n$--dualizing sheaf $W_n\omega_X$, together with the Cartier operator, see \autoref{section_Witt_dc}.
\end{example}

\begin{rem}\label{pushfoward}
	\begin{itemize}
		\item We can take the (higher) pushforward of a $W_n$--Cartier module: if $f \colon X \to Y$ is a morphism of Noetherian $F$--finite $\bF_p$-schemes and $(\cM, \theta)$ is a $W_n$--Cartier module on $X$, then $(R^if_*\cM, R^if_*\theta)$ defines a $W_n$--Cartier module on $Y$. 
		\item Let $\cM$ be a $W_n$--Cartier module. Since $F(p) = p \in W_n\cO_X$, the submodules $p\cM$ and $\cM[p]$ are $W_n$--Cartier submodules ($\cM[p]$ denotes the submodule of $p$--torsion elements of $\cM$).
		\item The category $\QCoh_{W_nX}^C$ is a Grothendieck category, has enough injectives, and injectives in this category are also injective in $\QCoh_{W_nX}$ and $\Mod(W_n\cO_X)$ (see \cite[Lemma 4.1.3 and Corollary 4.2.5]{Baudin_Duality_between_Witt_Cartier_crystals_and_perverse_sheaves}). In particular, taking derived functors (e.g. derived pushforwards and cohomology) gives the expected results.
	\end{itemize}
\end{rem}

\begin{notation}\label{rem:def_Cartier_mod}
	Let $(\cM, \theta)$ be a $W_n$--Cartier module. We will abuse notations as follows: \[ \theta^e \coloneqq \theta \circ F_*\theta \circ \dots \circ F^{e-1}_*\theta \colon F^{e}_*\cM \to \cM. \]
\end{notation}

\begin{defn}
	\begin{itemize}
		\item A $W_n$--Cartier module $(\cM, \theta)$ is said to be \emph{nilpotent} if its structural morphism is nilpotent, i.e. $\theta^e = 0$ for some $e \geq 1$. 
		\item A $W_n$--Cartier module $(\cM, \theta)$ is said to be \emph{locally nilpotent} if it is a union of nilpotent $W_n$--Cartier submodules. 
		\item A morphism $f \colon \cM \to \cN$ of $W_n$--Cartier modules is a \emph{nil--isomorphism} (resp. \emph{lnil--isomorphism}) if both $\ker(f)$ and $\coker(f)$ are nilpotent (resp. locally nilpotent).
	\end{itemize}
\end{defn}

\begin{defn}
	Let $(\cM, \theta)$ be a $W_n$--Cartier module. Then the \emph{perfection} of $(\cM, \theta)$ is $\cM^{\perf} \coloneqq \lim_e F^e_*\cM$.
\end{defn}

\begin{lem}\label{Gabber_finiteness}
	Let $\cM$ be a coherent $W_n$--Cartier module on $X$.
	\begin{enumerate}
		\item The inverse system $\{F^{e}_*\cM\}_{e \geq 1}$ satisfies the Mittag--Leffler condition. In particular, $\cM^{\perf} = \Rlim F^{e}_*\cM$ and perfection is exact on coherent $W_n$--Cartier modules.
		\item We have that $\cM$ is nilpotent if and only if $\cM^{\perf} = 0$.
		\item For any morphism $f \colon \cM \to \cN$ of coherent $W_n$--Cartier modules, $f$ is a nil--isomorphism if and only if $f^{\perf}$ is an isomorphism.
	\end{enumerate}
\end{lem}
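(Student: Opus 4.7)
The plan is to reduce everything to Gabber's stabilization principle: for any coherent $W_n$--Cartier module $(\cM, \theta)$ on $X$, the descending chain of coherent sub-$W_n\cO_X$--modules
\[ \cM \supseteq \theta(F_*\cM) \supseteq \theta^2(F_*^2\cM) \supseteq \theta^3(F_*^3\cM) \supseteq \cdots \]
stabilizes. This holds because $W_nX$ is Noetherian (since $X$ is Noetherian and $F$--finite), so $\cM$ is a Noetherian $W_n\cO_X$--module, and thus any descending chain of coherent submodules must stabilize. Write $\cM^s \coloneqq \bigcap_e \theta^e(F_*^e\cM)$; by construction $\theta$ restricts to a surjection $F_*\cM^s \twoheadrightarrow \cM^s$, and $\cM^s = \theta^{e_0}(F_*^{e_0}\cM)$ for $e_0 \gg 0$.

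For part (1), I would unwind what the Mittag--Leffler condition says for the inverse system $\{F_*^e \cM\}_e$ with transition maps $F_*^e \theta \colon F_*^{e+1}\cM \to F_*^e\cM$. The image of $F_*^{e'}\cM \to F_*^e\cM$ for $e' \geq e$ equals $F_*^e\bigl(\theta^{e'-e}(F_*^{e'-e}\cM)\bigr)$; as $e' \to \infty$ this stabilizes to $F_*^e \cM^s$ by the Gabber statement and the fact that $F_*^e$ is exact (since $F$ is finite, hence affine). This verifies Mittag--Leffler, hence $\Rlim^1 F_*^e \cM = 0$, which gives $\cM^{\perf} = \Rlim F_*^e\cM$. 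For exactness of perfection on coherent Cartier modules: given a short exact sequence $0 \to \cM' \to \cM \to \cM'' \to 0$ of coherent $W_n$--Cartier modules, the sequences $0 \to F_*^e\cM' \to F_*^e\cM \to F_*^e\cM'' \to 0$ are short exact, all three form Mittag--Leffler systems by the argument just given (applied to each term), and so the long exact sequence associated to $\Rlim$ collapses to a short exact sequence of limits.

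For part (2), if $\cM$ is nilpotent then $\theta^e = 0$ for some $e$, so $\cM^s = 0$, and since $\cM^{\perf}$ maps (via Mittag--Leffler and the fact that $\lim$ of a ML system surjects onto each stable image) surjectively onto $F_*^e \cM^s = 0$ for every $e$, we get $\cM^{\perf} = 0$. Conversely, the identification $\cM^{\perf} = \lim F_*^e\cM$ together with Mittag--Leffler shows that the canonical map $\cM^{\perf} \to F_*^e \cM$ has image exactly $F_*^e\cM^s$, so $\cM^{\perf} = 0$ forces $\cM^s = 0$; by construction of $\cM^s$ this says $\theta^e(F_*^e\cM) = 0$ for $e \gg 0$, which is precisely the nilpotence of $\theta$.

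For part (3), apply exactness of perfection from (1) to the four-term exact sequence $0 \to \ker(f) \to \cM \to \cN \to \coker(f) \to 0$. This yields $\ker(f^{\perf}) = \ker(f)^{\perf}$ and $\coker(f^{\perf}) = \coker(f)^{\perf}$ (both $\ker(f)$ and $\coker(f)$ remain coherent $W_n$--Cartier modules). Then $f$ is a nil--isomorphism iff both $\ker(f)$ and $\coker(f)$ are nilpotent, iff by part (2) both $\ker(f)^{\perf}$ and $\coker(f)^{\perf}$ vanish, iff $f^{\perf}$ is an isomorphism.

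The only non-formal step is the Gabber stabilization underlying part (1); once that is in hand, parts (2) and (3) are essentially book-keeping. The main subtlety to double-check is that exactness of $F_*$ (from finiteness of the Frobenius on $W_nX$) really is available in this $W_n$--truncated setting, so that the argument giving the Mittag--Leffler condition and the exactness of perfection goes through unchanged from the $n=1$ case treated in the Cartier module literature.
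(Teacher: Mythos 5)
Your reduction of parts (2) and (3) to part (1), and your deduction of the Mittag--Leffler condition and of the exactness of perfection from the stabilization of the image chain, are all correct and are essentially the standard route (the paper itself just cites \cite[Lemma 4.4.2]{Baudin_Duality_between_Witt_Cartier_crystals_and_perverse_sheaves}, which ultimately rests on the same stabilization statement from \cite{Blickle_Bockle_Cartier_modules_finiteness_results}).

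However, there is a genuine gap at the one step you yourself flag as non-formal. You justify the stabilization of the descending chain $\cM \supseteq \theta(F_*\cM) \supseteq \theta^2(F^2_*\cM) \supseteq \cdots$ by saying that $\cM$ is a Noetherian $W_n\cO_X$--module and ``thus any descending chain of coherent submodules must stabilize.'' Noetherianity gives the \emph{ascending} chain condition, not the descending one: $k[x] \supsetneq (x) \supsetneq (x^2) \supsetneq \cdots$ is a strictly decreasing chain of coherent submodules of a Noetherian module. The stabilization of the images $\theta^e(F^e_*\cM)$ is a genuine theorem (due to Gabber, and to Blickle--B\"ockle in this generality), and its proof really uses the $p^{-e}$--linear structure of $\theta$: one argues by Noetherian induction on the support of $\cM$, using that at a generic point of the support the stalk has finite length (so the chain does satisfy the descending chain condition there), and then spreads out. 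As written, your argument proves nothing at the crucial step; you must either cite the stabilization result or supply this induction. Everything downstream of it in your proposal is fine.
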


\begin{proof}
	See \cite[Lemma 4.4.2]{Baudin_Duality_between_Witt_Cartier_crystals_and_perverse_sheaves}.
\end{proof}

\begin{rem}
	The above tells us that taking perfection is a good way to get rid of nilpotent phenomena in the coherent case. However, coherence is \emph{crucial} here: the Cartier module $H^0(\bA_k^1, \omega_{\bA^1_k})$ is locally nilpotent over any $F$--finite field $k$, but its structural map is surjective! (hence its perfection is non--zero). 
\end{rem}

\begin{defn}[{\cite[Definition 4.3.5]{Baudin_Duality_between_Witt_Cartier_crystals_and_perverse_sheaves}}]
	We define the category of \emph{$W_n$--Cartier quasi--crystals} to be the quotient of $\QCoh_{W_nX}^C$ by locally nilpotent objects (see \stacksproj{02MS}), and denote it $\QCrys_{W_nX}^C$.
	
	Similarly, we define the category of \emph{$W_n$--Cartier crystals} to be the quotient of $\Coh_{W_nX}^C$ by nilpotent objects (note that being locally nilpotent and nilpotent agree for coherent $W_n$--Cartier modules), and denote it $\Crys_{W_nX}^C$.
\end{defn}

\begin{rem}
	\begin{itemize}
		\item Intuitively, working in the category of crystals allows us to forget about nilpotent phenomena. For example, a coherent $W_n$--Cartier module is the zero object as a $W_n$--Cartier crystal if and only if it is nilpotent. More generally, a morphism $f \colon \cM \to \cN$ of $W_n$--Cartier modules is an isomorphism at the level of crystals if and only if it is a nil--isomorphism. 
		\item If $\cM, \cN \in \QCoh_{W_nX}^C$ are isomorphic in $\QCrys_{W_nX}^C$, we will write $\cM \sim_C \cN$.
	\end{itemize}
\end{rem}

Here is a fundamental finiteness result about $W_n$--Cartier crystals that will be very useful to us.

\begin{thm}[{\cite{Blickle_Bockle_Cartier_modules_finiteness_results}}]\label{crystals_have_finite_length}
	Let $\cM$ be a $W_n$--Cartier crystal. Then $\cM$ is both a Noetherian and Artinian object. Hence, it has a finite filtration whose subquotients are simple, and given any two such filtrations, the graded pieces are isomorphic (up to reordering). In particular, the set of simple subquotients of $\cM$ (up to isomorphism) is finite. 
\end{thm}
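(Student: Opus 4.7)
The plan is to establish the Noetherian and Artinian properties of the abelian category $\Crys_{W_nX}^C$ separately, from which the existence of a finite composition series together with Jordan--H\"older uniqueness follows by standard abelian-category theory.

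Noetherianness is the easy half. Given any ascending chain $\cM_1 \subseteq \cM_2 \subseteq \cdots$ of coherent $W_n$--Cartier subobjects of a fixed representative coherent $\cM$, the underlying chain of $W_n\cO_X$--submodules of $\cM$ stabilizes because $W_n\cO_X$ is Noetherian (as $X$ is Noetherian and $F$--finite) and $\cM$ is coherent. A fortiori this chain stabilizes in the quotient category $\Crys_{W_nX}^C$.

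For Artinianness my plan is to reduce to the classical case $n=1$ by a devissage along the Verschiebung. For a coherent $W_n$--Cartier module $\cM$ one has a finite filtration by $W_n$--Cartier submodules whose successive quotients are annihilated by $p$ and therefore become coherent classical Cartier modules on $X$; compatibility of $\theta$ with the filtration follows from the relations $FV=VF=p$ in $W_n\cO_X$. Since an extension of Artinian objects is Artinian, it suffices to treat $n=1$, which is the theorem of Blickle--B\"ockle \cite{Blickle_Bockle_Cartier_modules_finiteness_results}. Their argument first replaces a coherent Cartier module $(\cM,\theta)$ by its nil-isomorphic surjective-$\theta$ piece $\cM^s \coloneqq \bigcap_e \theta^e(F^e_*\cM)$ (whose good behaviour comes from the Mittag--Leffler property of \autoref{Gabber_finiteness}), and then proceeds by Noetherian induction on $\Supp(\cM^s)$: at a generic point of an irreducible component of the support, a surjective Cartier module becomes a finite-dimensional vector space over the residue field equipped with a Frobenius-linear automorphism, and such an object admits only finitely many subobjects (for instance via the Artin--Schreier correspondence with $\bF_p$--local systems on that generic point), while the kernel of the restriction has strictly smaller support and is handled inductively.

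The main obstacle is clearly the Artinianness, since it has no analogue at the level of the underlying $W_n\cO_X$--module: a coherent module over a positive-dimensional Noetherian ring is never Artinian. It is precisely the presence of the structural morphism $\theta$ combined with passage to the crystal category that rigidifies $\cM$ enough to force the descending chain condition, and the local analysis at the generic points of the support is where the argument consumes its real input.
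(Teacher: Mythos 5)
Your proposal is correct and follows essentially the same route as the paper, whose proof simply delegates the Noetherian and Artinian properties to \cite[Proposition 4.4.8]{Baudin_Duality_between_Witt_Cartier_crystals_and_perverse_sheaves} (itself a d\'evissage to the $n=1$ case of \cite{Blickle_Bockle_Cartier_modules_finiteness_results}, which runs exactly as you sketch, using the stabilization of the images $\theta^e(F^e_*\cM)$ from \autoref{Gabber_finiteness} and Noetherian induction on the support) and notes that the Jordan--H\"older statements are formal consequences. The only point worth tightening is that a $p$-torsion coherent $W_n\cO_X$-module is a module over $W_n\cO_X/p$, which is a square-zero thickening of $\cO_X$ rather than $\cO_X$ itself; but since $F$ kills the ideal $VW_{n-1}\cO_X$ modulo $p$, the structural map annihilates the corresponding piece, which is therefore nilpotent as a Cartier module and invisible in the crystal category, so your reduction to $n=1$ goes through.
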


\begin{proof}
	The fact that $\cM$ is both Noetherian and Artinian is exactly \cite[Proposition 4.4.8]{Baudin_Duality_between_Witt_Cartier_crystals_and_perverse_sheaves}. The other statements are formal consequences.
\end{proof}

We finish this subsection by a construction which will be throughout useful for us.

\begin{constr}[{\cite[Construction 4.4.6]{Baudin_Duality_between_Witt_Cartier_crystals_and_perverse_sheaves}}]\label{construction_log_D}
	Let $(\cM, \theta)$ be a $W_n$--Cartier module, and let $I \subseteq \cO_X$ be a quasi--coherent ideal. Define a $W_n$--Cartier module structure on $\HHom_{W_n\cO_X}(W_nI, \cM)$ by sending $f \colon W_nI \to \cM$ to the function sending $s \in W_nI$ to $\theta(f(F(s))) \in \cM$.
\end{constr}

\begin{lem}\label{pushforward_from_open}
	Let $j \colon U \subseteq X$ be an open immersion, let $\cM$ be a $W_n$--Cartier module, and let $I$ be a quasi--coherent ideal cutting out the complement (with any closed subscheme structure). Then \[ \HHom_{W_n\cO_X}(W_nI, \cM) \sim_C j_*\cM|_U. \]
\end{lem}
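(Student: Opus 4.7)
The natural candidate for the comparison morphism is the restriction map $\Phi \colon \HHom_{W_n\cO_X}(W_nI, \cM) \to j_*\cM|_U$, well-defined because $I|_U = \cO_U$ forces $W_nI|_U = W_n\cO_U$, so a homomorphism from $W_nI$ over $U$ is determined by its value at $1$. A short check using $F(1) = 1$ shows that $\Phi$ intertwines the Cartier structure of \autoref{construction_log_D} with the natural Cartier structure on $j_*\cM|_U$. The goal is then to show that both $\ker \Phi$ and $\coker \Phi$ are locally nilpotent, so that $\Phi$ becomes an isomorphism in $\QCrys_{W_nX}^C$.

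The strategy is to factor $\Phi$ through the system $\{\HHom_{W_n\cO_X}(W_n(I^m), \cM)\}_{m \geq 1}$, where $W_n(I^m)$ is the ideal of Witt vectors with components in $I^m$. Since $W_nX$ is Noetherian and the descending filtrations $\{(W_nI)^m\}_m$ and $\{W_n(I^m)\}_m$ of ideals of $W_n\cO_X$ cutting out $Z = X \setminus U$ are cofinal (the inclusion $(W_nI)^m \subseteq W_n(I^m)$ is a direct computation with the Witt product, and the reverse inclusion up to a shift of $m$ follows from Artin--Rees on $W_nX$), we obtain the standard identification
\[
j_*\cM|_U \;=\; \colim_m \HHom_{W_n\cO_X}(W_n(I^m), \cM),
\]
under which $\Phi$ is the canonical map to the colimit at $m = 1$. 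It therefore suffices to prove, for every $m \geq 1$, that the restriction map
\[
\rho_m \colon \HHom_{W_n\cO_X}(W_nI, \cM) \longrightarrow \HHom_{W_n\cO_X}(W_n(I^m), \cM)
\]
is a nil-isomorphism of $W_n$-Cartier modules.

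The key input is that in characteristic $p$ the Witt Frobenius acts componentwise by the $p$-th power, so $F^e(W_nI) \subseteq W_n(I^{p^e})$ for all $e \geq 0$. Fix $e$ with $p^e \geq m$, giving $F^e(W_nI) \subseteq W_n(I^m)$. One application of $\theta^e$ then controls both the kernel and cokernel of $\rho_m$: if $f \in \ker \rho_m$, i.e.\ $f|_{W_n(I^m)} = 0$, then for every $s \in W_nI$ we have $F^e(s) \in W_n(I^m)$, so $\theta^e(f)(s) = \theta^e(f(F^e s)) = 0$; conversely, given $g \in \HHom_{W_n\cO_X}(W_n(I^m), \cM)$, the formula $\tilde g(s) \coloneqq \theta^e(g(F^e s))$ defines a $W_n\cO_X$-linear extension $\tilde g \colon W_nI \to \cM$ (linearity follows from $F^e(as) = F^e(a) F^e(s)$, from $W_n\cO_X$-linearity of $g$, and from $\theta^e(F^e(a)\,m) = a \, \theta^e(m)$), and by construction $\rho_m(\tilde g) = \theta^e(g)$, so $\theta^e$ annihilates $\coker \rho_m$.

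Passing to the filtered colimit over $m$ exhibits $\ker \Phi$ and $\coker \Phi$ as filtered unions of nilpotent $W_n$-Cartier modules, hence as locally nilpotent objects, so $\Phi$ is an isomorphism in $\QCrys_{W_nX}^C$. The main technical point to pin down in a full write-up is the cofinality statement needed to justify the colimit identification of $j_*\cM|_U$ (and the compatibility of the Cartier structures across this identification); everything else is a formal consequence of the componentwise description of the Witt Frobenius in characteristic $p$ and of \autoref{construction_log_D}.
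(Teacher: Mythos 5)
Your overall strategy is the right one, and it is essentially the standard argument (the paper itself only cites \cite[Lemma 4.4.7.(a)]{Baudin_Duality_between_Witt_Cartier_crystals_and_perverse_sheaves} rather than giving a proof): compare $\HHom_{W_n\cO_X}(W_nI, \cM)$ with a Deligne-type colimit computing $j_*\cM|_U$, and use that the Witt Frobenius pushes $W_nI$ into ideals cutting out $Z$ with high multiplicity so that each transition map is a nil--isomorphism. Your kernel and cokernel computations for $\rho_m$ are correct and complete: $F^e(W_nI)\subseteq W_n(I^{[p^e]})\subseteq W_n(I^{p^e})$, the formula $\tilde g(s)=\theta^e(g(F^e s))$ is $W_n\cO_X$--linear by $F^e$--semilinearity of $\theta^e$, and passing to the filtered colimit does exhibit $\ker\Phi$ and $\coker\Phi$ as unions of (images of) nilpotent Cartier submodules, hence as locally nilpotent objects.

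The one genuine soft spot is exactly the one you flag, and your proposed justification for it does not work. Deligne's formula on the Noetherian scheme $W_nX$ gives $j_*\cM|_U=\colim_m\HHom_{W_n\cO_X}((W_nI)^m,\cM)$, i.e.\ the colimit over \emph{powers} of the ideal $W_nI$; to replace this by $\colim_m\HHom_{W_n\cO_X}(W_n(I^m),\cM)$ you need, for each $m$, an inclusion $W_n(I^{m'})\subseteq (W_nI)^m$ for some $m'\gg m$. This is not an instance of Artin--Rees (which compares $\fa^jM\cap N$ with $\fa^{j-c}(\fa^cM\cap N)$ and says nothing about containments between two different descending chains of ideals), and it is the real content you have deferred: proving it requires the Teichm\"uller decomposition $(a_1,\dots,a_n)=\sum_i V^{i-1}[a_i]$, the identities $V^i(x)\cdot y=V^i(xF^i(y))$ and $F^i[u]=[u^{p^i}]$, local finite generation of $I$, and a downward induction on the Verschiebung filtration to handle the non-additivity of $[\,\cdot\,]$. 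I would instead avoid the auxiliary system $\{W_n(I^m)\}$ altogether: work directly with $\{(W_nI)^m\}$ and prove the single clean inclusion $F^e(W_nI)\subseteq (W_nI)^m$ whenever $p^e\ge (m-1)p^{n-1}+1$. This follows at once from the Teichm\"uller decomposition, since $F^e$ is additive and
\[
F^e\bigl(V^i[a]\bigr)=V^i\bigl[a^{p^e}\bigr]=V^i\bigl([a^{p^e-(m-1)p^i}]\cdot F^i([a]^{m-1})\bigr)=V^i\bigl[a^{p^e-(m-1)p^i}\bigr]\cdot[a]^{m-1}\in (W_nI)^m.
\]
With this, your kernel and cokernel arguments go through verbatim with $W_n(I^m)$ replaced by $(W_nI)^m$, and the colimit identification is exactly Deligne's formula with no cofinality discussion needed.
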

\begin{proof}
	See \cite[Lemma 4.4.7.(a)]{Baudin_Duality_between_Witt_Cartier_crystals_and_perverse_sheaves}.
\end{proof}

\subsection{Witt dualizing complexes}\label{section_Witt_dc}

Throughout, $X$ denotes a fixed variety with structural morphism $g \colon X \to \Spec k$. Here is our main object of study:

\begin{defn}
	The \emph{canonical $W_n$-dualizing complex} is the complex $W_n\omega_X^{\bullet} \coloneqq g^!\cO_{W_n(k)}$. By \stacksproj{0AA3}, this is a dualizing complex on $W_nX$, and the associated duality functor is written $\bD \coloneqq \cR\HHom_{W_n\cO_X}(-, W_n\omega_X^{\bullet})$.
\end{defn}

\begin{rem}\label{link_with_usual_de_Rham_Witt_cpx_theory}
	Assume that $X$ is smooth. Then this complex agrees with the last piece of the de Rham--Witt complex of Illusie (see \cite{Illusie_Complexe_de_de_Rham_Witt_et_cohomologie_cristalline}) by \cite[Section I]{Ekedahl_Duality_Hodge_Witt}) up to a shift. These objects carry natural maps $R \colon W_{n + 1}\omega_X \to W_n\omega_X$, $F \colon W_{n + 1}\omega_X \to F_*W_n\omega_X$, $V \colon F_*W_n\omega_X \to W_{n + 1}\omega_X$ and $\underline{p} \colon W_n\omega_X \inj W_{n + 1}\omega_X$ (see \cite{Illusie_Complexe_de_de_Rham_Witt_et_cohomologie_cristalline}). As explained in \cite[Step 1 of the proof of Proposition 9.11]{Quasi_F_splittings_III} (see also \cite[Lemma 2.2.4]{Ekedahl_Duality_Hodge_Witt}), applying $\bD$ to these maps respectively gives $\underline{p} \colon W_n\cO_X \to W_{n + 1}\cO_X$ (the unique morphism such that $p = \underline{p} \circ R$), $V$, $F$ (or more precisely $F \circ R$ under our notations) and $R$. In the following, we will treat the non--necessarily smooth case.
\end{rem}

Since $k$ is perfect, the Frobenius map on $W_n(k)$ is an isomorphism. In particular, $F^! = F^{-1}_*$ on $\Spec W_n(k)$, so there is a natural isomorphism \[ \cO_{W_n(k)} \to F^!\cO_{W_n(k)}. \] We can then apply $g^!$ to obtain a natural isomorphism \[ W_n\omega_X^{\bullet} \to F^!W_n\omega_X^{\bullet}. \] Since $F$ is finite on $W_nX$, we obtain by adjunction a morphism \[ C \colon F_*W_n\omega_X^{\bullet} \to W_n\omega_X^{\bullet}, \] which we call the \emph{Cartier operator} on $W_n\omega_X^{\bullet}$. Note that by construction, \[ \bD(C) \colon W_n\cO_X \to \cR\HHom(F_*W_n\omega_X^{\bullet}, W_n\omega_X^{\bullet}) \cong F_*\cR\HHom(W_n\omega_X^{\bullet}, F^!W_n\omega_X^{\bullet}) \cong F_*W_n\cO_X\] is simply the Frobenius map. Now, consider the composition \[ W_n\cO_X \xto{F} F_*W_n\cO_X \xto{V} W_{n + 1}\cO_X. \] Applying $\bD$ and standard properties of Grothendieck duality, we obtain a morphism \[ R \colon W_{n + 1}\omega_X^{\bullet} \xto{\bD(V)} F_*W_n\omega_X^{\bullet} \xto{C} W_n\omega_X^{\bullet}. \]
Note that the diagram \[ \begin{tikzcd}
	F_*W_{n + 1}\omega_X^{\bullet} \arrow[rr, "C"] 	\arrow[d, "F_*R"] &  & W_{n + 1}\omega_X^{\bullet} \arrow[d, "R"] \\
	F_*W_n\omega_X^{\bullet} \arrow[rr, "C"]                 &  & W_n\omega_X^{\bullet}                
\end{tikzcd} \]
commute, because its Grothendieck dual 
	\[ \begin{tikzcd}
	F_*W_{n + 1}\cO_X               &  & W_{n + 1}\cO_X \arrow[ll, "F"']          \\
	F^2_*W_n\cO_X \arrow[u, "F_*V"] &  & F_*W_n\cO_X \arrow[u, "V"']              \\
	F_*W_n\cO_X \arrow[u, "F_*F"]   &  & W_n\cO_X \arrow[ll, "F"] \arrow[u, "F"']
	\end{tikzcd} \] does. In particular, if we denote 
\[ (W_n\omega_X^{\bullet})^{\perf} \coloneqq \Rlim_e F^e_*W_n\omega_X^{\bullet}, \]
where the derived limit is taken over the Cartier operators, we obtain natural maps \[ R^{\perf} \colon (W_{n + 1}\omega_X^{\bullet})^{\perf} \to (W_n\omega_X^{\bullet})^{\perf}. \]

\begin{rem}
	Note that by exactness of perfection of coherent $W_n$--Cartier modules (see \autoref{Gabber_finiteness}), there is no potential clash of notations between what we used to denote $(\cdot)^{\perf}$ and what we write here.
\end{rem}

\begin{defn}
	The \emph{canonical Witt dualizing complex} on $X$ is by definition \[  
	W\omega_X^{\bullet} \coloneqq \Rlim_n W_n\omega_X^{\bullet}, \] where the derived limit is taken over the maps $R \colon W_{n + 1}\omega_X^{\bullet} \to W_n\omega_X^{\bullet}$.
\end{defn}

\begin{lem}\label{Witt_omega_coperfect}
	The induced morphism $C \colon F_*W\omega_X^{\bullet} \to W\omega_X^{\bullet}$ is an isomorphism. In particular, there is a natural isomorphism given by the composition \[ W\omega_X^{\bullet} \cong (W\omega_X^{\bullet})^{\perf} = \Rlim_e \Rlim_n F^e_*W_n\omega_X^{\bullet} \cong  \Rlim_n \: (W_n\omega_X^{\bullet})^{\perf}. \]
\end{lem}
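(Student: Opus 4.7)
The plan is to exhibit $F$ as a two--sided inverse of $C$ after passing to $\Rlim$. The construction of $R$ given above already yields $R = C \circ F$ at every finite level, so what remains is to establish the ``dual'' identity
\[ F \circ C \;=\; F_*R \quad \text{as morphisms } F_*W_{n+1}\omega_X^{\bullet} \to F_*W_n\omega_X^{\bullet}. \]
Granted this, the argument is formal: combining it with $R = C \circ F$ gives $F \circ R = F \circ C \circ F = F_*R \circ F$, so $F$ defines a morphism of inverse systems $\{W_{n+1}\omega_X^{\bullet}\}_n \to \{F_*W_n\omega_X^{\bullet}\}_n$ compatible with the transition maps $R$ and $F_*R$; the Cartier operator $C$ is already a morphism of inverse systems in the other direction by the commutative diagram displayed above; and since $F_*$ commutes with $\Rlim$ ($F$ being finite), we obtain morphisms $C \colon F_*W\omega_X^{\bullet} \to W\omega_X^{\bullet}$ and $F \colon W\omega_X^{\bullet} \to F_*W\omega_X^{\bullet}$ in the derived limit whose compositions are induced by $R$ and $F_*R$, each of which becomes the identity after taking $\Rlim$. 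Hence $C$ is an isomorphism, with inverse $F$.

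The main obstacle is therefore this single displayed identity. I would handle it by applying Grothendieck duality $\bD$: as recalled in the preceding paragraphs, $\bD$ sends $C$ to the Witt vector Frobenius $F^{\mathrm{abs}} \colon W_{n+1}\cO_X \to F_*W_{n+1}\cO_X$, sends $F$ to $V$, and sends $R$ to $\underline{p}$. The desired identity therefore reduces to
\[ F^{\mathrm{abs}} \circ V \;=\; F_*\underline{p} \quad \text{as morphisms } F_*W_n\cO_X \to F_*W_{n+1}\cO_X, \]
which can be checked by a direct computation in Witt coordinates: both sides send $(a_0, \ldots, a_{n-1})$ to $(0, a_0^p, \ldots, a_{n-1}^p)$. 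This verification is closely parallel to the Witt vector diagram already used above to establish the compatibility of $C$ with the restriction maps, so no new technology beyond what the paper has already set up is required.

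Once $C$ is known to be an isomorphism, the ``in particular'' assertion is purely formal. First, the natural map $W\omega_X^{\bullet} \to (W\omega_X^{\bullet})^{\perf} = \Rlim_e F^e_*W\omega_X^{\bullet}$ is an isomorphism, because every transition map in this tower is an iso (each is an iterate of $C$). Next, expanding $W\omega_X^{\bullet} = \Rlim_n W_n\omega_X^{\bullet}$ and using that each finite pushforward $F^e_*$ commutes with $\Rlim_n$ identifies $(W\omega_X^{\bullet})^{\perf}$ with $\Rlim_e \Rlim_n F^e_*W_n\omega_X^{\bullet}$. Finally, interchanging the two derived limits (legitimate for a double $\Rlim$) and recognising the inner one gives $\Rlim_n (W_n\omega_X^{\bullet})^{\perf}$, completing the claimed chain of natural isomorphisms.
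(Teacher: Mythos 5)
Your proof is correct and follows essentially the same route as the paper: exhibit $F=\bD(V)$ as a two--sided inverse of $C$ after $\Rlim$, using that $C\circ F=R$ and $F\circ C=F_*R$ are the transition maps of the two towers, and then deduce the ``in particular'' chain by the standard commutations of $F^e_*$ with $\Rlim$ and of the two derived limits. The only difference is that you explicitly verify the identity $F\circ C=F_*R$ (via its Grothendieck dual $F\circ V=F_*\underline{p}$ in Witt coordinates), which the paper simply asserts as part of its commutative diagram; this is a welcome extra check, not a different argument.
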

\begin{proof}
	We start with the first statement. Consider the commutative diagram 
	\[ \begin{tikzcd}
		W_{n + 2}\omega_X^{\bullet} \arrow[r, "\bD(V)"] \arrow[rr, "R", bend left] & F_*W_{n + 1}\omega_X^{\bullet} \arrow[r, "C"] \arrow[rr, "F_*R"', bend right] & W_{n + 1}\omega_X^{\bullet} \arrow[r, "\bD(V)"] & F_*W_n\omega_X^{\bullet}.
	\end{tikzcd} \]
	Taking derived limit gives \[ \begin{tikzcd}
		W\omega_X^{\bullet} \arrow[r] \arrow[rr, "id", bend left] & F_*W\omega_X^{\bullet} \arrow[r, "C"] \arrow[rr, "id"', bend right] & W\omega_X^{\bullet} \arrow[r] & F_*W\omega_X^{\bullet},
	\end{tikzcd} \] so $C \colon F_*W\omega_X^{\bullet} \to W\omega_X^{\bullet}$ has an inverse (hence it is an isomorphism). We conclude the proof by the following string of natural isomorphisms:
	\begin{align*}
		W\omega_X^{\bullet} & \cong \Rlim_eF^e_*W\omega_X^{\bullet} \\
		& = \Rlim_eF^e_*\Rlim_n W_n\omega_X^{\bullet} \\
		& \cong \Rlim_e \Rlim_n F^e_*W_n\omega_X^{\bullet} \\
		& \cong \Rlim_n \Rlim_e F^e_*W_n\omega_X^{\bullet} \\
		& = \Rlim_n \: (W_n\omega_X^{\bullet})^{\perf}. \qedhere
	\end{align*} 
\end{proof}

\begin{lem}\label{explicit_description_mult_by_p}
	Multiplication by $p^m$ on $W_{n + m}\omega_X^{\bullet}$ is given by the composition \[ W_{n + m}\omega_X^{\bullet} \xto{R^m} W_n\omega_X^{\bullet} \xto{\bD(R^m)} W_{n + m}\omega_X^{\bullet}, \] where we recall that in the second arrow, $R^m \colon W_{n + m}\cO_X \to W_n\cO_X$ denotes the map $(s_1, \dots, s_{n + m}) \mapsto (s_1, \dots, s_n)$.
\end{lem}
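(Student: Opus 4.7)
The plan is to reduce the stated identity at the level of Witt dualizing complexes to an elementary identity on $W_{n+m}\cO_X$, via Grothendieck duality.

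First, I would establish the factorization
\[ p^m \cdot \id_{W_{n+m}\cO_X} = \underline{p}^m \circ R^m \]
at the level of Witt structure sheaves, where the right hand side is the composition $W_{n+m}\cO_X \xto{R^m} W_n\cO_X \xto{\underline{p}^m} W_{n+m}\cO_X$. The case $m = 1$ is the defining property of $\underline{p}$. The inductive step rests on the companion identity $R \circ \underline{p} = p \cdot \id_{W_n\cO_X}$, which follows from $\underline{p} \circ R = p \cdot \id_{W_{n+1}\cO_X}$ by post-composing with $R$ and using surjectivity of $R \colon W_{n+1}\cO_X \to W_n\cO_X$ to cancel on the right. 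Each interior composition $\underline{p} \circ R$ in the iterate $\underline{p} \circ \underline{p} \circ \dots \circ R \circ R$ then contributes a factor of $p$.

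Second, I would apply $\bD = \cR\HHom_{W_{n+m}\cO_X}(-, W_{n+m}\omega_X^{\bullet})$. Because $\bD$ is $W_{n+m}(k)$-linear, dualizing multiplication by $p^m$ yields multiplication by $p^m$ on $W_{n+m}\omega_X^{\bullet}$, and by contravariant functoriality
\[ p^m \cdot \id_{W_{n+m}\omega_X^{\bullet}} = \bD(\underline{p}^m \circ R^m) = \bD(R^m) \circ \bD(\underline{p}^m). \]

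Third, I would identify $\bD(\underline{p}^m)$ with the map $R^m \colon W_{n+m}\omega_X^{\bullet} \to W_n\omega_X^{\bullet}$. This is built into the very construction of $R$ given immediately before the lemma: unpacking the definitions gives $R = C \circ \bD(V) = \bD(F) \circ \bD(V) = \bD(V \circ F) = \bD(\underline{p})$, and then functoriality of Grothendieck duality yields $R^m = \bD(\underline{p}^m)$ by iterating. Substituting into the identity of step two produces exactly the claimed formula.

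The main ``obstacle'' here is really bookkeeping: carefully tracking sources and targets of the intermediate composites of $R$'s and $\underline{p}$'s through the induction, and verifying that $\bD$ of an $m$-fold composite is the composite of duals taken in the reverse order. Once this is sorted out, the proof is a formal consequence of Grothendieck duality and of the way the restriction map on dualizing complexes was set up.
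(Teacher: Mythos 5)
Your proposal is correct and follows essentially the same route as the paper: the paper's proof factors multiplication by $p^m$ on $W_{n+m}\cO_X$ as $V^m \circ F^m \circ R^m$ and dualizes, which is exactly your factorization $\underline{p}^m \circ R^m$ since $\underline{p} = V \circ F$, combined with the observation (already implicit in the construction of $R$ on dualizing complexes) that $\bD(\underline{p}) = \bD(V\circ F) = C \circ \bD(V) = R$. Your extra bookkeeping with the inductive verification of $p^m = \underline{p}^m\circ R^m$ is a harmless elaboration of the same argument.
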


\begin{proof}
	This follows immediately by Grothendieck duality, since multiplication by $p^m$ on $W_{n + m}\cO_X$ is given by \[ W_{n + m}\cO_X \xto{R^m} W_{n}\cO_X \xto{F^m} F^m_*W_n\cO_X \xto{V^m} W_{n + m}\cO_X. \qedhere \]
\end{proof}

\begin{lem}\label{cone_of_p_Witt_omega}
	The following is an exact triangle:
	\[ \begin{tikzcd}
		W\omega_X^{\bullet} \arrow[r, "p^m"] & W\omega_X^{\bullet} \arrow[r] & (W_m\omega_X^{\bullet})^{\perf} \arrow[r, "+1"] & {}
	\end{tikzcd} \] where the second map is given by the composition $W\omega_X^{\bullet} \xto{\cong} \Rlim_n \: (W_n\omega_X^{\bullet})^{\perf} \to (W_m\omega_X^{\bullet})^{\perf}$.
\end{lem}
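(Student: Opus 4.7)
My plan is to obtain the triangle as $\Rlim_n$ of compatible level-wise distinguished triangles coming from Grothendieck duality. For each $n \geq 0$, the standard short exact sequence of $W_{n+m}\cO_X$-modules
\[ 0 \to F^n_*W_m\cO_X \xto{V^n} W_{n+m}\cO_X \xto{R^m} W_n\cO_X \to 0 \]
yields, after applying $\bD = \cR\HHom_{W_{n+m}\cO_X}(-, W_{n+m}\omega_X^{\bullet})$, a distinguished triangle
\[ W_n\omega_X^{\bullet} \xto{\bD(R^m)} W_{n+m}\omega_X^{\bullet} \xto{\bD(V^n)} F^n_*W_m\omega_X^{\bullet} \xto{+1}. \]
By \autoref{link_with_usual_de_Rham_Witt_cpx_theory} and the functoriality of $\bD$, I would identify the first map with $\underline{p}^m$.

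Next I would verify that these triangles fit into a morphism of triangles as $n$ varies, with transition morphisms $R$ on the first two terms and $F^n_*C$ on the third (where $C$ is the Cartier operator, as in the definition of perfection of $W_m\omega_X^{\bullet}$). The square involving the first two terms dualizes to the trivial identity $R^m \circ \underline{p} = \underline{p} \circ R^m$ on Witt vectors. The square on the right is subtler: dualizing it reduces to a compatibility between $V^{n+1}$, Frobenius, and $\underline{p}$ on Witt vectors, which in turn follows from the standard identities and the fact that $\bD(C) = F$ recalled in \autoref{section_Witt_dc}. Applying $\Rlim_n$ (which preserves distinguished triangles) then yields
\[ \Rlim_n W_n\omega_X^{\bullet} \to \Rlim_n W_{n+m}\omega_X^{\bullet} \to \Rlim_n F^n_*W_m\omega_X^{\bullet} \xto{+1}. \]
The first two limits both equal $W\omega_X^{\bullet}$ (the middle via cofinal reindexing $n \mapsto n+m$), and the third equals $(W_m\omega_X^{\bullet})^{\perf}$ by definition of perfection.

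Finally I would identify the two maps with those in the statement. For the first map: by \autoref{explicit_description_mult_by_p}, $p^m$ on $W_{n+m}\omega_X^{\bullet}$ equals $\underline{p}^m \circ R^m$, and $R^m$ viewed as a morphism of inverse systems is just a reindexing, hence induces the identity after $\Rlim_n$. Therefore $\Rlim_n \underline{p}^m$ is indeed multiplication by $p^m$ on $W\omega_X^{\bullet}$. For the second map: I would use the isomorphism $W\omega_X^{\bullet} \cong \Rlim_n (W_n\omega_X^{\bullet})^{\perf}$ from \autoref{Witt_omega_coperfect} and perform a diagram chase, comparing the level-wise map $\bD(V^n) \colon W_{n+m}\omega_X^{\bullet} \to F^n_*W_m\omega_X^{\bullet}$ to the canonical projection after the limit identifications. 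The main obstacle I anticipate is precisely this last identification together with the compatibility check on the right column in the middle step, both of which require careful bookkeeping through the double limits and duality.
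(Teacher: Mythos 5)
Your proposal is correct and takes essentially the same route as the paper: the paper forms exactly the same ladder of dualized triangles $W_n\omega_X^{\bullet} \xto{\bD(R^m)} W_{n+m}\omega_X^{\bullet} \xto{\bD(V^n)} F^n_*W_m\omega_X^{\bullet}$ with vertical transition maps $R$, $R$, $C$, checks commutativity by applying $\bD$ and verifying the corresponding identities on Witt vectors, and concludes by applying $\Rlim$ together with \autoref{explicit_description_mult_by_p}. The bookkeeping you flag as the main obstacle is dispatched in the paper by the same dualization trick you describe, so there is no gap.
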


\begin{proof}
	We have a commutative diagram of exact triangles \[ \begin{tikzcd}
		W_{n + 1}\omega_X^{\bullet} \arrow[d, "R"'] \arrow[rr, "\bD(R^m)"] &  & W_{n + 1 + m}\omega_X^{\bullet} \arrow[d, "R"'] \arrow[rr, "\bD(V^{n + 1})"] &  & F^{n + 1}_*W_m\omega_X^{\bullet} \arrow[d, "C"'] \arrow[rr, "+1"] &  & {} \\
		W_n\omega_X^{\bullet} \arrow[rr, "\bD(R^m)"]                       &  & W_{n + m}\omega_X^{\bullet} \arrow[rr, "\bD(V^n)"]                           &  & F^n_*W_m\omega_X^{\bullet} \arrow[rr, "+1"]                       &  & {}
	\end{tikzcd} \] (checking its commutativity can be done by applying $\bD$). Applying $\Rlim$ and \autoref{explicit_description_mult_by_p} concludes the proof.
\end{proof}

\begin{defn}
	\begin{itemize}
		\item The \emph{canonical $W_n$--dualizing sheaf} is $W_n\omega_X \coloneqq \cH^{-\dim(X)}(W_n\omega_X^{\bullet})$, and we write $W_n\omega_X^{\perf} \coloneqq (W_n\omega_X)^{\perf}$.
		\item The \emph{canonical Witt--dualizing sheaf} is $W\omega_X \coloneqq \lim_n W_n\omega_X$. 
	\end{itemize}
\end{defn}

\begin{lem}\label{Witt_omega_p_torsion_free}
	The sheaf $W\omega_X$ is $p$-torsion free.
\end{lem}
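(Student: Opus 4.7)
The approach I would take is to invoke the distinguished triangle of \autoref{cone_of_p_Witt_omega} with $m = 1$:
\[
W\omega_X^{\bullet} \xrightarrow{p} W\omega_X^{\bullet} \to (\omega_X^{\bullet})^{\perf} \xrightarrow{+1},
\]
where I abbreviate $\omega_X^{\bullet} \coloneqq W_1\omega_X^{\bullet}$, i.e.\ the ordinary dualizing complex of $X$. Setting $d \coloneqq \dim X$ and taking the long exact sequence of cohomology sheaves around degree $-d$, I obtain
\[
\cH^{-d-1}\bigl((\omega_X^{\bullet})^{\perf}\bigr) \to \cH^{-d}(W\omega_X^{\bullet}) \xrightarrow{p} \cH^{-d}(W\omega_X^{\bullet}).
\]
It therefore suffices to establish the two identifications: (i) $\cH^{-d}(W\omega_X^{\bullet}) \cong W\omega_X$; (ii) $\cH^{-d-1}\bigl((\omega_X^{\bullet})^{\perf}\bigr) = 0$.

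The common ingredient is that each $W_n\omega_X^{\bullet}$ is a dualizing complex on the $d$--dimensional Noetherian scheme $W_nX$, so its cohomology sheaves vanish in all degrees strictly below $-d$. For (i), I would apply the Milnor exact sequence associated to $W\omega_X^{\bullet} = \Rlim_n W_n\omega_X^{\bullet}$, namely
\[
0 \to \Rlim^1_n \cH^{-d-1}(W_n\omega_X^{\bullet}) \to \cH^{-d}(W\omega_X^{\bullet}) \to \lim_n W_n\omega_X \to 0.
\]
The left--hand term vanishes by the amplitude observation, so the middle is naturally $\lim_n W_n\omega_X = W\omega_X$, where the transition maps in this limit coincide with the $R$'s used to define $W\omega_X$ by construction. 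For (ii), the analogous Milnor sequence applied to $(\omega_X^{\bullet})^{\perf} = \Rlim_e F^e_* \omega_X^{\bullet}$ yields the vanishing directly, since the affine pushforward $F^e_*$ is exact and preserves cohomological amplitude, so the two outer terms of the Milnor sequence at degree $-d-1$ are both zero.

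I do not foresee a genuine obstacle: the argument is essentially a bookkeeping exercise combining \autoref{cone_of_p_Witt_omega} with two routine Milnor--type manipulations of derived limits. The only small care needed is to check that the transition maps appearing in the limit identification in (i) coincide with the restriction maps defining $W\omega_X = \lim_n W_n\omega_X$, which is transparent from the constructions of \autoref{section_Witt_dc}.
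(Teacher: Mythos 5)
Your argument is correct and is essentially the paper's own proof: the paper likewise applies \autoref{cone_of_p_Witt_omega} with $m=1$ and the identification $\cH^{-\dim X}(W\omega_X^{\bullet}) = \lim_n \cH^{-\dim X}(W_n\omega_X^{\bullet}) = W\omega_X$, with the Milnor-sequence bookkeeping and the vanishing of $\cH^{-\dim X - 1}$ of $(\omega_X^{\bullet})^{\perf}$ left implicit. You have simply spelled out those details.
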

\begin{proof}
	This follows from \autoref{cone_of_p_Witt_omega}, since \[ \cH^{-\dim(X)}(W\omega_X^{\bullet}) = \cH^{-\dim(X)}(\Rlim_n W_n\omega_X^{\bullet}) = \lim_n \cH^{-\dim(X)}(W_n\omega_X^{\bullet}) = W\omega_X. \qedhere \] 
\end{proof}

\begin{lem}\label{general_Mittag_Leffler_property}
	Let $\{\cM_n\}_{n \geq 1}$ be an inverse system of sheaves on a Noetherian $F$--finite $\bF_p$--scheme $Y$, where each $\cM_n$ is a coherent $W_n$--Cartier module and each map $\cM_{n + 1} \to \cM_n$ is a morphism of $W_{n + 1}$--Cartier modules. Then the inverse system of sheaves $\{ \cM_n^{\perf} \}_{n \geq 1}$ satisfies the Mittag--Leffler condition.
\end{lem}
\begin{proof}
	Fix $n \geq 1$, and given $m \geq n$, set $\cI_m \coloneqq \im(\cM_m \to \cM_n)$. By \cite[Lemma 4.3.8]{Baudin_Duality_between_Witt_Cartier_crystals_and_perverse_sheaves}, we can see these sub--$W_m$--Cartier modules of $\cM_n$ as sub--$W_n$--Cartier crystals. By Artinianity of $W_n$--Cartier crystals (see \autoref{crystals_have_finite_length}), these images stabilize up to nilpotence. Since perfection is exact and annihilates nilpotence (\autoref{Gabber_finiteness}), we conclude that the system $\{W_n\omega_X^{\perf}\}_n$ satisfies the Mittag--Leffler condition.
\end{proof}

\begin{lem}\label{Witt_omega_Mittag_Leffler}
	The natural maps $W\omega_X \to \Rlim_n W_n\omega_X$ and $W\omega_X \to \Rlim_n W_n\omega_X^{\perf}$ are isomorphisms.
\end{lem}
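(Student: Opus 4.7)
The plan is to exploit the two a priori different presentations of $W\omega_X^{\bullet}$: by definition $W\omega_X^{\bullet} = \Rlim_n W_n\omega_X^{\bullet}$, and by \autoref{Witt_omega_coperfect} also $W\omega_X^{\bullet} \cong \Rlim_n (W_n\omega_X^{\bullet})^{\perf}$. Both isomorphisms claimed in the lemma will be reduced to computing the top cohomology sheaf $\cH^{-d}$ (with $d = \dim(X)$) of these complexes, via the Milnor short exact sequence combined with the finiteness results \autoref{Gabber_finiteness} and \autoref{crystals_have_finite_length}.

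First, I would compute the cohomology of $(W_n\omega_X^{\bullet})^{\perf} = \Rlim_e F_*^e W_n\omega_X^{\bullet}$ for each fixed $n$. Each inverse system $\{F_*^e \cH^i(W_n\omega_X^{\bullet})\}_e$ is Mittag--Leffler by \autoref{Gabber_finiteness}, as $\cH^i(W_n\omega_X^{\bullet})$ is a coherent $W_n$--Cartier module. The $R^1\lim_e$ term in the inner Milnor sequence therefore vanishes, yielding
\[ \cH^i((W_n\omega_X^{\bullet})^{\perf}) \cong \cH^i(W_n\omega_X^{\bullet})^{\perf}. \]
As $W_n\omega_X^{\bullet}$ is a dualizing complex supported in degrees $[-d, 0]$, we have $\cH^{-d-1}((W_n\omega_X^{\bullet})^{\perf}) = 0$ and $\cH^{-d}((W_n\omega_X^{\bullet})^{\perf}) = W_n\omega_X^{\perf}$. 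Applying the outer Milnor sequence in degree $-d$ to both $\Rlim_n W_n\omega_X^{\bullet}$ and $\Rlim_n (W_n\omega_X^{\bullet})^{\perf}$, the analogous vanishing $\cH^{-d-1} = 0$ at each level kills the $R^1\lim_n$ term, and I obtain
\[ W\omega_X = \cH^{-d}(W\omega_X^{\bullet}) = \lim_n W_n\omega_X = \lim_n W_n\omega_X^{\perf}. \]

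To upgrade these identifications of $\lim$'s into isomorphisms of $\Rlim$'s, I need the vanishings $R^1\lim_n W_n\omega_X = 0$ and $R^1\lim_n W_n\omega_X^{\perf} = 0$, which I plan to establish via the Mittag--Leffler criterion. For each fixed $n$, the descending chain of sub--Cartier--modules $\im(R^e \colon W_{n+e}\omega_X \to W_n\omega_X)$ inside the coherent module $W_n\omega_X$ stabilizes in the Cartier crystal category $\Crys_{W_nX}^C$ by the Noetherian/Artinian property of \autoref{crystals_have_finite_length}, i.e.\ it stabilizes up to nil--isomorphism. Passing to perfection converts nil--isomorphisms into genuine isomorphisms (\autoref{Gabber_finiteness}), so the images of the iterated transition maps truly stabilize on $W_n\omega_X^{\perf}$, giving ML and hence $R^1\lim_n W_n\omega_X^{\perf} = 0$. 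For the non--perfectised system, I plan to deduce the ML property either by direct comparison with the perfectised system via the natural map $W_n\omega_X^{\perf} \to W_n\omega_X$, or using the fiber sequence of \autoref{cone_of_p_Witt_omega}, which controls the discrepancy between $W\omega_X/p^m$ and $W_m\omega_X^{\perf}$ in terms of $p$--torsion on $\cH^{-d+1}(W\omega_X^{\bullet})$.

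The hard part will be this last step: in the non--smooth case the transition $R \colon W_{n+1}\omega_X \to W_n\omega_X$ is not known to be surjective, so ML is not automatic, and the argument genuinely relies on the finite length of Cartier crystals together with the perfection machinery to dispose of the residual nilpotent contributions.
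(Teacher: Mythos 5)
Your handling of the perfected system and of the underived limits is correct and coincides with the paper's own argument: the identification $\cH^{-d}((W_n\omega_X^{\bullet})^{\perf})=W_n\omega_X^{\perf}$ via the inner Milnor sequence, the computation $W\omega_X=\lim_n W_n\omega_X=\lim_n W_n\omega_X^{\perf}$, and above all the proof that $\{W_n\omega_X^{\perf}\}_n$ is Mittag--Leffler (stabilization of the images $\im(W_m\omega_X\to W_n\omega_X)$ up to nilpotence by Artinianity of Cartier crystals, then exactness of perfection and the fact that it kills nilpotence) are exactly what the paper does. You also correctly identify that the remaining issue is $R^1\lim_n W_n\omega_X=0$.

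That last step is where your proposal has a genuine gap. The system $\{W_n\omega_X\}_n$ is \emph{not} known to be Mittag--Leffler: Artinianity of crystals only gives stabilization of the images up to nilpotence, and a strictly descending chain of coherent subsheaves with nilpotent successive quotients is perfectly possible (descending chains of coherent submodules need not stabilize). Neither of your two suggested routes repairs this. Termwise comparison along $W_n\omega_X^{\perf}\to W_n\omega_X$ cannot transfer the ML property, since these maps are in general neither injective nor surjective and give no control on $\im(R^{m})\subseteq W_n\omega_X$; and \autoref{cone_of_p_Witt_omega} compares $W\omega_X^{\bullet}/p^m$ with $(W_m\omega_X^{\bullet})^{\perf}$, which says nothing about the transition maps of the inverse system $\{W_n\omega_X\}_n$ in $n$. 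The paper's actual mechanism avoids any ML claim for the non-perfected system: since $R$ factors as $C\circ\bD(V)$ and the complementary composite is $F_*R$ (the same factorization used in \autoref{Witt_omega_coperfect}, applied now at the level of the sheaves $W_n\omega_X$), the Cartier operator on $\Rlim_n W_n\omega_X$ acquires a two-sided inverse; commuting $\Rlim_e$ with $\Rlim_n$ and using $W_n\omega_X^{\perf}=\Rlim_e F^e_*W_n\omega_X$ (\autoref{Gabber_finiteness}) then yields $\Rlim_n W_n\omega_X\cong\Rlim_n W_n\omega_X^{\perf}$, and the right-hand side is concentrated in degree zero by the ML statement you already proved. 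You should replace your step for the non-perfected system with this argument.
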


\begin{proof}
	By \autoref{Witt_omega_coperfect}, it is enough to show that the higher $R^i\lim$ vanish. The fact that $R^i\lim W_n\omega_X^{\perf}$ vanishes for $i > 0$ follows readily from \autoref{general_Mittag_Leffler_property}, so we are left to show that $R^i\lim W_n\omega_X = 0$ for $i > 0$. Applying $\cH^{-\dim(X)}$ to the first diagram of the proof of \autoref{Witt_omega_coperfect} and then applying $\Rlim$ shows as in \emph{loc. cit.} that the induced Cartier operator on $\Rlim_n W_n\omega_X$ is an isomorphism, and hence that the natural map \[ \Rlim_e \Rlim_n F^e_*W_n\omega_X = \Rlim_e F^e_*\Rlim_n W_n\omega_X \to \Rlim_n W_n\omega_X \] is an isomorphism (the inverse limit indexed by $e$ is taken over the Cartier operator). Since $W_n\omega_X^{\perf} = \Rlim_e F^e_*W_n\omega_X$ by \autoref{Gabber_finiteness}, we deduce by switching the $\Rlim$'s that the natural map $\Rlim_n (W_n\omega_X)^{\perf} \to \Rlim_n W_n\omega_X$ is an isomorphism.
\end{proof}

\begin{rem}\label{link_with_usual_full_de_Rham_Witt_cpx_theory}
	Assume that $X$ is smooth. Then the canonical Witt--dualizing complex agrees with the last piece of the (non--truncated) de Rham--Witt complex of Illusie (see \cite{Illusie_Complexe_de_de_Rham_Witt_et_cohomologie_cristalline}) up to a shift. Indeed, by \autoref{link_with_usual_de_Rham_Witt_cpx_theory}, this is true for each canonical $W_n$--dualizing complex, and also our maps $R \colon W_{n + 1}\omega_X^{\bullet} \to W_n\omega_X^{\bullet}$ agree with the restriction maps in \cite{Illusie_Complexe_de_de_Rham_Witt_et_cohomologie_cristalline}. Hence, we conclude by \autoref{Witt_omega_Mittag_Leffler}. 
\end{rem}

\begin{notation}
	Let $D$ be an effective Cartier divisor, with associated ideal sheaf $I_D \inc \cO_X$. Then we will write \[ W_n\omega_{(X, D)} \coloneqq \HHom_{W_n\cO_X}(W_nI_D, W_n\omega_X) \] (see \autoref{construction_log_D}).
\end{notation}
\begin{rem}
	When $X$ is smooth and $D$ has simple normal crossing support, then $W_n\omega_{(X, D)}$ agrees with the sheaf $W_n\Omega_{(X, D)}^{\dim X}$ defined in \cite[6.2.1]{Rulling_Ren_Duality_for_Hodge_Witt_cohomology_with_modulus} (this follows from Definition 8.1 and Theorem 9.3 in \emph{loc. cit.}). In particular, if $D$ is in addition reduced, then we obtain the usual sheaf $W_n\Omega_X^{\dim X}(\log D)$ of top Witt--forms with logarithmic poles along $D$ (see e.g. \cite{Matsuue_On_relative_and_overconvergent_de_Rhamm_Witt_cohomology_for_log_schemes}) by \cite[Remark 5.5.3]{Rulling_Ren_Duality_for_Hodge_Witt_cohomology_with_modulus}.
\end{rem}

\begin{lem}\label{logarithmic_works_well_when_CM}
	Assume that $X$ is Cohen--Macaulay, and let $D$ be an effective Cartier divisor. Then for all $n > 0$, we have that \[ \cR\HHom_{W_n\cO_X}(W_nI_D, W_n\omega_X) = W_n\omega_{(X, D)} \] (see \autoref{construction_log_D}). In particular, for any $n > 0$, we have short exact sequences \[ 0 \to W_n\omega_{(X, D)} \to W_{n + 1}\omega_{(X, D)} \to F^n_*\omega_X(D) \to 0. \]
\end{lem}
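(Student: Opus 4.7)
The plan is to proceed by induction on $n$, using the short exact sequence of $W_{n+1}\cO_X$--modules
\[ 0 \to F^n_*I_D \xrightarrow{V^n} W_{n+1}I_D \xrightarrow{R} W_nI_D \to 0. \]
Here the kernel consists of Witt vectors $(0,\dots,0,s_n)$ with $s_n \in I_D$; a quick check using $a \cdot V^n(s) = V^n(F^n(a)s)$ shows that the $W_{n+1}\cO_X$--action factors through $W_{n+1}\cO_X \xrightarrow{R^n} \cO_X \xrightarrow{F^n} \cO_X$, so $V^n(I_D) \cong j_* F^n_* I_D$ where $j \colon X \hookrightarrow W_{n+1}X$ is the reduction closed immersion. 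Applying $\cR\HHom_{W_{n+1}\cO_X}(-, W_{n+1}\omega_X^{\bullet})$ to this sequence produces an exact triangle on $W_{n+1}X$.

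Next, I want to identify the outer two terms of this triangle via Grothendieck duality. For the closed immersions $i \colon W_n X \hookrightarrow W_{n+1}X$ and $j \colon X \hookrightarrow W_{n+1}X$, I will verify that
\[ i^! W_{n+1}\omega_X^{\bullet} = W_n\omega_X^{\bullet}, \qquad j^! W_{n+1}\omega_X^{\bullet} = \omega_X^{\bullet}. \]
Both reduce via functoriality of $g^!$ to computations on the base, i.e.\ that $i_{n,k}^!\cO_{W_{n+1}k} = \cO_{W_nk}$ and $j_k^!\cO_{W_{n+1}k} = \cO_k$ for the corresponding inclusions of Artinian Gorenstein local rings (which can be checked by writing down the periodic free resolution of $W_nk$ over $W_{n+1}k$ given alternately by multiplication by $p^n$ and $p$, using that $k$ is perfect). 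Combined with duality for the finite Frobenius $F^n \colon X \to X$ (which gives $F^{n!}\omega_X^{\bullet} = \omega_X^{\bullet}$ since $F$ is an automorphism of $\Spec k$), the triangle becomes
\[ \cR\HHom_{W_n\cO_X}(W_nI_D, W_n\omega_X^{\bullet}) \to \cR\HHom_{W_{n+1}\cO_X}(W_{n+1}I_D, W_{n+1}\omega_X^{\bullet}) \to F^n_*\cR\HHom_{\cO_X}(I_D, \omega_X^{\bullet}) \xrightarrow{+1}. \]

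For the base case $n=1$, since $I_D$ is a line bundle and $X$ is Cohen--Macaulay (so $\omega_X^{\bullet} = \omega_X[\dim X]$), we get $\cR\HHom_{\cO_X}(I_D, \omega_X) = \omega_X(D)$, concentrated in one degree. For the inductive step, the inductive hypothesis and the Cohen--Macaulay assumption ensure both outer terms of the triangle are concentrated in degree $-\dim X$, respectively isomorphic to $W_n\omega_{(X,D)}[\dim X]$ and $F^n_*\omega_X(D)[\dim X]$. The long exact sequence of cohomology then forces the middle term to also be concentrated in degree $-\dim X$, proving $\cR\HHom_{W_{n+1}\cO_X}(W_{n+1}I_D, W_{n+1}\omega_X) = \HHom_{W_{n+1}\cO_X}(W_{n+1}I_D, W_{n+1}\omega_X) = W_{n+1}\omega_{(X,D)}$, and the remaining piece of the long exact sequence is exactly the claimed short exact sequence.

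The main obstacle I anticipate is the careful bookkeeping around Grothendieck duality, in particular pinning down the $W_{n+1}\cO_X$--module structure on $V^n(I_D)$ as a Frobenius pushforward and verifying $i^!W_{n+1}\omega_X^{\bullet} = W_n\omega_X^{\bullet}$ (since $i$ is cut out by the non--regular element $p^n$, this does not follow from a naive Koszul argument and must be done by computing Ext groups over $W_{n+1}k$ directly). Once these identifications are in place, the induction and the long exact sequence of cohomology deliver both statements of the lemma simultaneously.
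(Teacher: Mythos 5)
Your proof is correct, but it is organised differently from the paper's. The paper fixes $n$ and inducts on the truncation level $m$ of the \emph{first} argument, proving $\EExt^i_{W_n\cO_X}(W_mI_D,W_n\omega_X)=0$ for $i>0$ and $1\le m\le n$ using the sequence $0 \to F^{m-1}_*I_D \to W_mI_D \to W_{m-1}I_D \to 0$; the dualizing object $W_n\omega_X$ never changes, so the only instances of Grothendieck duality needed are for the reduction $X\hookrightarrow W_nX$ (the case $m=1$) and for the Frobenius (via $F^!W_n\omega_X^{\bullet}\cong W_n\omega_X^{\bullet}$). The short exact sequence is then deduced at the end by applying the functor to the sequence with $m=n+1$. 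You instead induct on $n$ itself and change the dualizing complex at each step, which forces you to compute $i^!W_{n+1}\omega_X^{\bullet}=W_n\omega_X^{\bullet}$ for the closed immersion $i\colon W_nX\hookrightarrow W_{n+1}X$; your reduction of this to the periodic resolution of $W_n(k)$ over $W_{n+1}(k)$ by alternating multiplication by $p^n$ and $p$ is correct, and your identification of $V^n(I_D)$ with a twisted Frobenius pushforward matches the module structure the paper uses. The trade-off is clear: the paper's bookkeeping avoids the $i^!$ computation entirely, while your version produces the short exact sequence directly as the degree $-\dim X$ part of the long exact sequence rather than as an afterthought. One small point to make explicit: at the end you silently replace $W_{n+1}\omega_X^{\bullet}$ by $W_{n+1}\omega_X[\dim X]$; this is exactly the observation the paper makes at the outset via the triangles $W_n\omega_X^{\bullet}\to W_{n+1}\omega_X^{\bullet}\to F^n_*\omega_X^{\bullet}$, and it also drops out of your own induction applied with $I_D=\cO_X$, so you should run that case first (or cite it) before invoking it.
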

\begin{proof}
	First, note that $\omega_X = \omega_X^{\bullet}[-\dim X]$ by Cohen--Macaulayness of $X$. Thanks to the exact triangles \[ \begin{tikzcd}
		W_n\omega_X^{\bullet} \arrow[rr, "\bD(R)"] &  & W_{n + 1}\omega_X^{\bullet} \arrow[rr, "\bD(V^n)"] &  & F^n_*\omega_X^{\bullet} \arrow[rr, "+1"] &  & {}
	\end{tikzcd} \] we obtain that also $W_n\omega_X = W_n\omega_X^{\bullet}[-\dim X]$.
 
	Now, fix $n \geq 1$, and let us show by induction on $1 \leq m \leq n$ that for all $i > 0$, we have that $\EExt^i_{W_n\cO_X}(W_mI_D, W_n\omega_X) = 0$ (this shows the first result by definition). If $m = 1$, then by Grothendieck duality and our observation above, \[ \cR\HHom_{W_n\cO_X}(I_D, W_n\omega_X) \cong \cR\HHom_{\cO_X}(I_D, \omega_X) \expl{\cong}{$I_D = \cO_X(-D)$ is locally free} \omega_X(D), \] so the result holds. For $2 \leq m \leq n$, consider the exact sequence 
	\begin{equation}\label{hi_ppl_reading_my_tex____disgusting_isnt_it}
		\begin{tikzcd}
			0 \arrow[rr] &  & F^{m - 1}_*I_D \arrow[rr, "V^m"] &  & W_{m}I_D \arrow[rr, "R"] &  & W_{m - 1}I_D \arrow[rr] &  & 0.
		\end{tikzcd} 
	\end{equation} Applying $\cR\HHom_{W_n\cO_X}(-, W_n\omega_X)$ gives an exact triangle \[ \begin{tikzcd}
		{\cR\HHom(W_{m - 1}I_D, W_n\omega_X)} \arrow[r] &  {\cR\HHom(W_{m}I_D, W_n\omega_X)} \arrow[r] &  {\cR\HHom(F^{m - 1}_*I_D, W_n\omega_X)} \arrow[r, "+1"] &  {}
	\end{tikzcd} \] so given that the left term lives in degree zero by induction, it is enough to show that the right term does too. Since 
	\begin{align*}
		\cR\HHom_{W_n\cO_X}(F^{m - 1}_*I_D, W_n\omega_X) & \cong F^{m - 1}_*\cR\HHom_{W_n\cO_X}(I_D, F^{m  - 1, !}W_n\omega_X) \\ & \expl{\cong}{$F^!W_n\omega_X^{\bullet} 	\cong W_n\omega_X^{\bullet}$ and $W_n\omega_X^{\bullet} = W_n\omega_X[\dim X]$} F^{m - 1}_*\cR\HHom_{W_n\cO_X}(I_D, W_n\omega_X), 
	\end{align*} the proof of the induction step is finished by the case $m = 1$. 
	
	The statement after ``In particular'' now follows by applying $\cR\HHom_{W_{n + 1}\cO_X}(-, W_{n + 1}\omega_X)$ to the short exact sequence \autoref{hi_ppl_reading_my_tex____disgusting_isnt_it} with $m = n + 1$.
\end{proof}

\section{Witt--Kawamata--Viehweg vanishing}

\subsection{Review of arithmetic $\cD$-modules}\label{section_review_D_modules}

We will use, as much as possible, the notations from \cite{Lazda_Rigidification_of_arith_D_mods_and_overconv_RH_corr}. In the ``analytic'' side of the proof, our main tool will be rigid cohomology. This theory was introduced  by Berthelot in \cite{Berthelot_Geometrie_rigide_et_cohomologie_des_varieties_algebriques_de_caracteristique_p}, it takes values in fields of characteristic zero (hence it is not integral), and mimics de Rham cohomology in characteristic zero. Unlike crystalline cohomology, it has the advantage that it can be applied to any variety (and gives a reasonable theory), not only smooth proper ones. 

In fact, rigid cohomology can be taken with coefficients in any overconvergent $F$--isocrystal (they can be thought of as vector bundles on a characteristic zero lift, together with  an integrable connection and a Frobenius action, and with stronger convergence properties at the boundary for non-proper varieties). These objects are analogous to lisse sheaves in $l$-adic cohomology, or simply vector bundles with an integrable connection in characteristic zero. In particular, they cannot be expected to be stable under a six--functor formalism (non--smooth pushforwards are really an issue).

Constructing a full theory of coefficients for rigid cohomology has been a very rich subject in the last 30 years (\cite{Berthelot_D_modules_arithmetiques_I, Berthelot_D_modules_arithmetiques_II, Caro_D_modules_arithmetiques_surholonomes, Caro_Stabilite_de_l_holonomie_sur_les_vars_quasi_projs, Caro_Tsuzuki_Overholonomicity_of_overconvergent_F_isocrystals_over_smooth_varieties, Abe_Langlands_correspondence_for_isocrystals, Abe_Caro_Theory_of_weights_in_p_adic_cohomology,  Abe_Around_the_nearby_cycle_functor_for_arithmetic_D-modules, Lazda_Rigidification_of_arith_D_mods_and_overconv_RH_corr}). After this long work, we finally have objects, containing overconvergent $F$--isocrystals, which are stable under the six operations (namely pushforwards, pullbacks, tensor products, duality, exceptional direct images and exceptional inverse images), and which recover rigid cohomology: they are called \emph{overholonomic $\cD^{\dagger}$--modules}. There are also two natural t-structures on the derived category of these objects, whose analogues in characteristic zero correspond to the standard and perverse t-structures on constructible sheaves via the Riemann--Hilbert correspondence.

Let us introduce all the notations we will need:

\begin{notation}
	Let $X$ be a variety over $k$.
	\begin{itemize}
		\item It is said to be \emph{strongly realizable} if $X$ admits a locally closed immersion into a proper and smooth formal scheme over $\Spf W(k)$. In particular, quasi-projective varieties are strongly realizable (we will only ever use this theory in this context).
	\end{itemize}

From now on, assume that $X$ is strongly realizable. 
	\begin{itemize}
		\item The (bounded) derived category of overholonomic $\cD^{\dagger}$-modules on $X$ is denoted $D^b_{\hol}(X)$.
		\item The six--functors of this theory (see \cite[Section 1.1]{Abe_Langlands_correspondence_for_isocrystals}) on $D^b_{\hol}(X)$ will be denoted as follows: pushforwards by $f_+$, pullbacks by $f^+$, tensor product by $\otimes$, duality by $\bD$, exceptional direct image by $f_!$ and exceptional inverse image by $f^!$. We warn the reader that all these functors are already derived.
		
		\item We shall write \[ \bunit_X \coloneqq g^+(K), \] where $K$ denotes the constant arithmetic $\cD$-module on $k$ and $g \colon X \to \Spec k$ is the structural map.
		
		It is typically denoted $K_X$ in references, but this clashes with the usual notation from birational geometry, so we decided to give it another name here (the symbol $\bunit$ comes from the fact that it is the unit with respect to $\otimes$).
		
		\item The \emph{standard t-structure} on $D^b_{\hol}(X)$, denoted ($D^{\leq 0}(X)$, $D^{\geq 0}(X)$) is the one defined in \cite[Definition 1.2.1]{Abe_Caro_Theory_of_weights_in_p_adic_cohomology} (see 1.3.14 in \emph{loc. cit.}). The truncation functors will be denoted $\cH^i$. This t-structure should be thought of as an analogue of the perverse t-structure on constructible $l$-adic sheaves. 
		
		\item The \emph{constructible t-structure} on $D^b_{\hol}(X)$, denoted (${}^cD^{\leq 0}(X), {}^cD^{\geq 0}(X)$) is the one defined in \cite[1.3]{Abe_Langlands_correspondence_for_isocrystals}. 
		This t-structure should be thought of as an analogue of the standard t-structure on constructible $l$-adic sheaves.
	\end{itemize} 
\end{notation}

\begin{lem}\label{comparison_t_structures}
	Let $X$ be a strongly realizable variety. Then ${}^cD^{\leq 0}(X) \inc D^{\leq \dim X}(X)$.
\end{lem}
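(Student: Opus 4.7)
The plan is to argue by induction on $\dim X$, with devissage for the constructible t--structure along an open--closed decomposition.

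\textbf{Base case.} When $\dim X = 0$, the variety $X$ is a (reduced) point. Both t--structures on $D^b_{\hol}(X)$ agree with the standard t--structure of $D^b(K)$ (see e.g. \cite[1.3]{Abe_Langlands_correspondence_for_isocrystals} and \cite[1.3.14]{Abe_Caro_Theory_of_weights_in_p_adic_cohomology}), so the inclusion is trivially an equality.

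\textbf{Induction step.} Let $M\in {}^cD^{\leq 0}(X)$. By the characterization of the constructible t--structure (pointwise via $i_x^+$), there exists a dense smooth open $j\colon U\hookrightarrow X$, with complement $i\colon Z\hookrightarrow X$, such that $j^+M$ is, in the constructible heart, a (shift of an) overconvergent $F$--isocrystal on $U$. Using the localization triangle
\[
i_+ i^! M \longrightarrow M \longrightarrow j_+ j^+ M \xrightarrow{+1}
\]
it suffices to prove the bound $\cH^i=0$ for $i>\dim X$ on each of the two outer terms. For $i_+ i^! M$: the functor $i^!$ preserves ${}^cD^{\leq 0}$ (again from the pointwise description) and $i_+$ is t--exact for the standard t--structure, so this term lies in $D^{\leq\dim Z}(X)\subseteq D^{\leq\dim X}(X)$ by the induction hypothesis applied to $Z$.

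\textbf{The open part.} For the term $j_+ j^+ M$, we reduce further by standard devissage in the constructible heart to the case where $j^+M$ is of the form $E[d_U]$ in the standard t--structure, for an overconvergent $F$--isocrystal $E$ on the smooth variety $U$, with $d_U=\dim U=\dim X$. Indeed, such an $E[d_U]$ lies in the standard heart (as $U$ is smooth), so $E[d_U]\in D^{\leq 0}(U)$ and hence $E\in D^{\leq d_U}(U)=D^{\leq \dim X}(U)$; equivalently $j^+M\in D^{\leq\dim X}(U)$. The remaining step is the bound
\[
j_+\colon D^{\leq 0}(U)\longrightarrow D^{\leq\dim X}(X),
\]
that is, that pushforward along the affine--by--strata open immersion $j$ is cohomologically bounded above with shift at most $\dim X$ in the standard t--structure. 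This can be reduced by further shrinking $U$ (and reinduction on $\dim Z$) to the case where $j$ is an open immersion of a smooth affine variety, in which case the bound follows from the expected $\cD^\dagger$--theoretic affine vanishing statement for the standard (``perverse'') t--structure.

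\textbf{Main obstacle.} The technical heart of the argument is the last bounding statement for $j_+$ on the open part, i.e.\ the analogue of Artin's affine vanishing theorem for the standard t--structure on $D^b_{\hol}$. Everything else is bookkeeping via devissage and the localization triangle; once one has (i) the pointwise description of ${}^cD^{\leq 0}$, (ii) t--exactness of $i_+$ for the standard t--structure, and (iii) the affine vanishing bound for $j_+$, the induction goes through.
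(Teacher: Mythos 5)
Your argument is far more elaborate than necessary, and as written it has two genuine gaps. The statement is in fact immediate from the definition of the constructible t--structure in \cite[1.3]{Abe_Langlands_correspondence_for_isocrystals}: membership of $\cM^{\bullet}$ in ${}^cD^{\leq 0}(X)$ is \emph{defined} by requiring $i_Z^+\cM^{\bullet} \in D^{\leq \dim Z}(Z)$ for every closed immersion $i_Z \colon Z \inj X$, so taking $Z = X$ (the identity) already gives $\cM^{\bullet} \in D^{\leq \dim X}(X)$. There is no induction, no localization triangle, and no affine vanishing needed; this is the paper's entire proof.

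Concerning the gaps in your devissage: first, the claim that $i^!$ preserves ${}^cD^{\leq 0}$ is not justified by the pointwise description and is false in the analogous $\ell$--adic setting, where for the standard (constructible) t--structure it is $i^*$ that is right t--exact while $i^!$ is only left t--exact; the correct triangle to use would be $j_!j^+\cM \to \cM \to i_+i^+\cM \xto{+1}$, which then forces you to control $j_!$ rather than $j_+$. Second, and more seriously, the step you yourself identify as the technical heart --- the bound $j_+ \colon D^{\leq 0}(U) \to D^{\leq \dim X}(X)$ for affine open immersions in the standard t--structure --- is asserted as ``the expected $\cD^{\dagger}$--theoretic affine vanishing statement'' but never established. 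In a blind proof this is precisely the content that cannot be waved away; in the paper the corresponding input (right t--exactness of pushforward along affine morphisms, \cite[Proposition 1.3.13.(a)]{Abe_Caro_Theory_of_weights_in_p_adic_cohomology}) is used elsewhere, in the proof of the Artin vanishing theorem, not here.
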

\begin{proof}
	This is automatic from the definitions. Indeed, the requirement for some $\cM^{\bullet}$ to be in ${}^cD^{\leq 0}(X)$ is that for all closed immersion $i \colon Z \inj X$, we have $i^+\cM^{\bullet} \in D^{\leq \dim Z}(Z)$. Applying this to $Z = X$ gives the result.
\end{proof}

In the following, $D^b_{\mathrm{fg}}(K)$ denotes the derived category of finitely generated $K$--vector spaces.

\begin{lem}\label{comparison_for_k}
	The ``forgetful functor'' from overholonomic $\cD^{\dagger}$-modules on $k$ to finitely generated $K$--vector spaces is an equivalence of categories. Furthermore, the constructible and standard t-structures agree on $D^b_{\hol}(k)$, and they correspond to the standard t-structure on $D^b_{\mathrm{fg}}(K)$.
\end{lem}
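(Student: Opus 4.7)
The claim is essentially structural: everything becomes concrete because on a zero-dimensional base there is no differential structure to speak of and no nontrivial stratification to worry about.

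First, I would unpack what an arithmetic $\cD^{\dagger}$-module on $\Spec k$ actually is. The relevant formal model is $\Spf W(k)$, and after inverting $p$ the ring of differential operators is simply $K = \Frac(W(k))$ itself (there are no nonzero derivations on a perfect field of characteristic $p$ into itself). So the abelian category of coherent $\cD^{\dagger}$-modules on the point reduces to finite-dimensional $K$-vector spaces, and overholonomicity becomes vacuous beyond finite dimensionality. This identification should be available directly from the definition in \cite{Berthelot_D_modules_arithmetiques_I} (or alternatively from the way the six-functor formalism is set up in \cite{Abe_Langlands_correspondence_for_isocrystals}, where the base case is explicitly the category of finite-dimensional $K$-vector spaces). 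This immediately gives the equivalence of categories, and passing to bounded derived categories gives an equivalence $D^b_{\hol}(k) \simeq D^b_{\mathrm{fg}}(K)$.

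Next, I would compare the t-structures. For the constructible t-structure, the defining condition is that $\cM^{\bullet} \in {}^cD^{\leq 0}(\Spec k)$ iff $i_Z^+\cM^{\bullet} \in D^{\leq \dim Z}(Z)$ for every closed immersion $i_Z \colon Z \inj \Spec k$. The only nonempty such closed subscheme is $\Spec k$ itself, with $\dim Z = 0$, so the condition reduces to $\cM^{\bullet} \in D^{\leq 0}(\Spec k)$. Combined with \autoref{comparison_t_structures} (applied here with $\dim X = 0$, giving ${}^cD^{\leq 0}(k) \subseteq D^{\leq 0}(k)$), the argument for $\leq 0$ is immediate, and the dual argument with $\cD^{\dagger}$-duality (which swaps the two inequalities up to shift) handles the $\geq 0$ side. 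Hence the two t-structures on $D^b_{\hol}(k)$ coincide.

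Finally, I would verify that this common t-structure matches the standard t-structure on $D^b_{\mathrm{fg}}(K)$ under the equivalence from the first step. For the standard t-structure of Abe--Caro, by construction it is characterized, in the smooth case over a point of dimension $0$, as the one whose heart consists of honest modules (no shift by dimension), which is precisely the heart of $D^b_{\mathrm{fg}}(K)$. This is the easy step.

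The only potentially subtle point is the first one: being precise about the reference establishing that coherent overholonomic $\cD^{\dagger}$-modules over $\Spec k$ are exactly finite-dimensional $K$-vector spaces (as opposed to, say, only embedding into them). But this is a foundational unwinding of the definitions, not a real obstacle, and it is implicit in all the six-functor work cited. So the entire lemma is really a sanity check that the formalism on the point behaves as expected.
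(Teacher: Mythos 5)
Your proof is correct and takes essentially the same route as the paper, whose entire proof is ``this follows immediately from the definitions''; your write-up is precisely the unwinding of those definitions (trivial ring of differential operators on the point, the constructible conditions collapsing to the standard ones since the only closed subscheme of $\Spec k$ is itself with $\dim = 0$). The one point worth flagging is that $D^b_{\hol}(k)$ must be understood without Frobenius structures for the forgetful functor to finitely generated $K$--vector spaces to be fully faithful, which is consistent with the paper's conventions and implicit in your identification.
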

\begin{proof}
	This follows immediately from the definitions.
\end{proof}

As already mentioned, the theory of arithmetic $\cD$-modules is supposed to enlarge the theory of rigid cohomology. This is made precise by the following important theorem:

\begin{thm}[\cite{Lazda_Rigidification_of_arith_D_mods_and_overconv_RH_corr}]\label{Lazda_comparison}
	Let $X$ be a strongly realizable variety, with structural map $g \colon X \to \Spec k$. Then there is an isomorphism \[ g_+\bunit_X \cong \RGamma_{\rig}(X/K). \]
\end{thm}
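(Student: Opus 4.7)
The plan is to reduce the statement to the smooth case, where it essentially reproduces Berthelot's construction of rigid cohomology with coefficients, and then transfer the general case by cohomological descent. First, suppose $X$ is smooth, quasi-projective, and embedded as the special fibre of an open subscheme $\mathfrak{U}$ of a smooth proper formal scheme $\mathfrak{P}$ over $\Spf W(k)$. In this situation the unit object $\bunit_X = g^{+}K$ is, by the very construction in \cite{Berthelot_D_modules_arithmetiques_I,Berthelot_D_modules_arithmetiques_II}, the image under the specialisation functor of the constant overconvergent $F$-isocrystal on the tubular neighbourhood of $X$ in $\mathfrak{P}_K$. Its pushforward to the point computes the hypercohomology of the de~Rham complex on this tubular neighbourhood with overconvergent growth conditions along $\partial X$, which by definition is $\RGamma_{\rig}(X/K)$.

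Next, I would extend this identification to the case where $X$ is smooth but not necessarily embeddable in a smooth proper formal scheme. Since $X$ is strongly realisable, one can cover $X$ by opens which do admit such local frames; the compatibility of both $g_{+}\bunit_{(-)}$ and $\RGamma_{\rig}((-)/K)$ with Mayer--Vietoris (for the former this is part of the six-functor formalism via the localisation triangle $j_{!}j^{+}\bunit \to \bunit \to i_{+}i^{+}\bunit$, for the latter this is classical rigid cohomology) allows one to glue. One also needs to check that the identification in the previous step is independent of the chosen frame, which amounts to a standard comparison between de~Rham cohomologies of two dagger tubular neighbourhoods of the same variety; this independence is at the heart of the definition of rigid cohomology.

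For general, possibly singular, $X$ the strategy is to choose a proper hypercovering $X_{\bullet} \to X$ by smooth varieties (which exists by de Jong's alterations). Rigid cohomology satisfies proper cohomological descent by Tsuzuki \cite{Tsuzuki_Cohomological_descent_of_rigid_cohomology_for_proper_coverings}, so $\RGamma_{\rig}(X/K)$ can be computed as the totalisation of $\RGamma_{\rig}(X_{\bullet}/K)$. On the $\cD^\dagger$-module side, the unit object $\bunit_X$ is characterised by the property that $i^{+}\bunit_X = \bunit_Z$ and $j^{+}\bunit_X = \bunit_U$ for closed/open complementary immersions, and the proper descent property $\bunit_X = R\pi_{*}\bunit_{X_{\bullet}}$ for a proper hypercovering $\pi$ follows from the six-functor formalism combined with the existence of trace maps for proper surjections in this setup. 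Combining the smooth case with the commutation of $g_{+}$ and totalisation gives the required isomorphism.

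The main obstacle is the last step: showing that the ``constant'' object $\bunit_X$ in the overholonomic category is compatible with proper hypercoverings and agrees with the object produced by gluing from local frames. In the smooth, quasi-projective case this is essentially tautological, but in the singular case one must verify that the overholonomic pushforward along a proper hypercovering of smooth varieties really computes $\bunit_X$ and not some refined object; this is exactly the content of the rigidification/overconvergent Riemann--Hilbert-type comparison that Lazda establishes, and relies on the stability of overholonomicity under the six operations proven by Caro and Caro--Tsuzuki.
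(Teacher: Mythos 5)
Your plan is essentially an outline of how one might prove Lazda's theorem itself, whereas the paper does not reprove it: the statement is attributed to \cite{Lazda_Rigidification_of_arith_D_mods_and_overconv_RH_corr}, and the paper's entire proof consists of two citations. Namely, Lazda's comparison gives $\RGamma_{\rig}(X,\cF)\cong g_+\rho_X(\cF)$ for the constant overconvergent $F$--isocrystal $\cF=g^*K$, and Abe's theorem that the rigidification functor $\rho$ commutes with pullbacks gives $\rho_X(g^*K)\cong g^+\rho_k(K)=\bunit_X$. That is the whole argument. So the first thing to internalize is that the identification of $\bunit_X$ with ``the image of the constant isocrystal'' is itself one of the two nontrivial cited inputs (Abe's compatibility of $\rho$ with $g^+$), not something that holds ``by the very construction'' as you assert in your smooth embeddable case; already there, the fact that the unit of the six--functor formalism agrees with the specialization of the constant isocrystal, and that its pushforward computes overconvergent de Rham cohomology of the tube, rests on the Caro--Tsuzuki overholonomicity theorem and on Lazda's rigidification, not on the definitions.

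The genuine gap is in your descent step. You claim that $\bunit_X \simeq$ the totalization of $\pi_+\bunit_{X_\bullet}$ for a proper hypercovering ``follows from the six-functor formalism combined with the existence of trace maps for proper surjections.'' It does not: proper cohomological descent for the unit object on the arithmetic $\cD$--module side is precisely the hard content that makes the comparison with Tsuzuki's descent for rigid cohomology work, and you yourself concede two sentences later that ``this is exactly the content of the rigidification/overconvergent Riemann--Hilbert-type comparison that Lazda establishes.'' As written, the argument is therefore circular at its crucial step: the singular case is reduced to a descent statement whose proof is the theorem being proved. If you want a self-contained argument you must either prove that descent statement (which is a substantial piece of \cite{Lazda_Rigidification_of_arith_D_mods_and_overconv_RH_corr} and \cite{Abe_Around_the_nearby_cycle_functor_for_arithmetic_D-modules}), or do what the paper does and simply quote the two results that package it.
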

\begin{proof}
	By \cite[Last Corollary in Introduction]{Lazda_Rigidification_of_arith_D_mods_and_overconv_RH_corr}, we know that if $\cF \coloneqq g^*K$ denotes the constant overconvergent $F$--isocrystal on $X$, then \[ \RGamma_{\rig}(X/K) \coloneqq \RGamma_{\rig}(X, \cF) \cong g_+\rho_X(\cF) \]
	(the functor $\rho$ from overconvergent ($F$-)isocrystals to arithmetic $\cD$-modules is constructed in \cite[Theorem in Section 3.8]{Abe_Around_the_nearby_cycle_functor_for_arithmetic_D-modules}). By \emph{loc. cit.}, we know that $\rho$ commutes with pullbacks, in the sense that \[ \rho_X \circ g^* = g^+ \circ \rho_k. \]
	In our case, this gives the isomorphism \[ \rho_X(\cF) \cong \bunit_X, \] concluding the proof. 
\end{proof}

\subsection{Proof of Witt-Kawamata-Viehweg vanishing}

We start by showing the analogue of Artin vanishing. The author is not aware of a proof in full generality that does not use arithmetic $\cD$--modules.

\begin{thm}\label{vanishing for affines}
	Let $X$ be an affine variety. Then for all $i > \dim(X)$, we have \[ H^i_{\rig}(X/K) = 0. \]
\end{thm}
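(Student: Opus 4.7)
The plan is to use the arithmetic $\cD^{\dagger}$-module formalism recalled in the previous subsection to reduce the vanishing to an Artin-type right t-exactness statement for affine pushforwards. By \autoref{Lazda_comparison}, if $g \colon X \to \Spec k$ denotes the structural morphism, then
\[ \RGamma_{\rig}(X/K) \cong g_+ \bunit_X \]
in $D^b_{\hol}(k)$. By \autoref{comparison_for_k}, the target is equivalent to $D^b_{\mathrm{fg}}(K)$ with the two t-structures collapsing to the usual one, so it suffices to show that $g_+ \bunit_X$ lies in $D^{\leq \dim X}(\Spec k)$ with respect to the standard t-structure.

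The first step is to locate $\bunit_X$ with respect to the two available t-structures. Since $\bunit_X = g^+(K)$ is the constant coefficient on $X$, it should lie in the heart of the constructible t-structure, hence in particular $\bunit_X \in {}^cD^{\leq 0}(X)$. Combining this with \autoref{comparison_t_structures}, we obtain
\[ \bunit_X \in D^{\leq \dim X}(X). \]

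The second, and essential, step is to invoke the arithmetic $\cD$-module analogue of the classical Artin vanishing theorem: for any affine morphism $f \colon Y \to Z$ of strongly realizable varieties, the pushforward $f_+$ is right t-exact for the standard t-structure, i.e.\ $f_+(D^{\leq 0}(Y)) \inc D^{\leq 0}(Z)$. This mirrors the well-known statement that pushforward along an affine morphism is right t-exact for the perverse t-structure on constructible $\ell$-adic sheaves, and such a result should be available in the literature on arithmetic $\cD^{\dagger}$-modules (compare with the formalism of \cite{Abe_Langlands_correspondence_for_isocrystals, Abe_Caro_Theory_of_weights_in_p_adic_cohomology}). Applying it to our affine $g$ gives $g_+ \bunit_X \in D^{\leq \dim X}(\Spec k)$, and via \autoref{comparison_for_k} this translates to the vanishing $H^i_{\rig}(X/K) = 0$ for $i > \dim X$.

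The main potential obstacle is pinning down (or reproving) the affine vanishing property of $f_+$ in the exact form needed. If it is not available directly, one would reduce to two separate cases by factoring through a closed immersion composed with an open affine immersion: closed immersions are t-exact for both t-structures, while for an open affine immersion $j \colon U \inj X$ the right t-exactness of $j_+$ for the standard t-structure is the essential content of Artin vanishing for $\cD^{\dagger}$-modules, which has already been studied in this framework. Once this ingredient is in place, the proof is a purely formal combination of the two steps above.
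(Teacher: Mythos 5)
Your proof is correct and follows essentially the same route as the paper: identify $\RGamma_{\rig}(X/K)$ with $g_+\bunit_X$ via \autoref{Lazda_comparison}, place $\bunit_X$ in $D^{\leq \dim X}(X)$ using the constructible t-exactness of $g^+$ together with \autoref{comparison_t_structures}, and conclude by right t-exactness of affine pushforwards for the standard t-structure. The one ingredient you leave as "should be available" is precisely \cite[Proposition 1.3.13.(a)]{Abe_Caro_Theory_of_weights_in_p_adic_cohomology}, which rests on the $\cD$-affinity results of \cite{Noot_Huyghe_D_affinite_de_l_espace_projectif}.
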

\begin{proof}
	Let $g \colon X \to \Spec k$ denote the structural map. By \autoref{Lazda_comparison} and \autoref{comparison_for_k}, our goal is to show that for all $i > \dim X$, \[ \cH^i(g_+\bunit_X) = 0. \]
	In other words, we want to show that $g_+\bunit_X \in D^{\leq \dim X}(k)$. Since $g^+$ is $t$-exact for the constructible $t$-structure (\cite[Lemma 1.3.4]{Abe_Langlands_correspondence_for_isocrystals}), we know in particular that $\bunit_X \in D^{\leq \dim X}(X)$ by \autoref{comparison_t_structures}. Since $g$ is an affine morphism, it is right t-exact for the standard $t$--structure by \cite[Proposition 1.3.13.(a)]{Abe_Caro_Theory_of_weights_in_p_adic_cohomology} (which fundamentally comes from the results of \cite{Noot_Huyghe_D_affinite_de_l_espace_projectif}), so $g_+\bunit_X \in D^{\leq \dim X}(k)$.
\end{proof}

Now, let us move to Witt--Kawamata--Viehweg vanishing. We will use the Dieudonné--Manin classification and the theory of slopes of $F$--isocrystals, see e.g. \cite[Beginning of the proof of Theorem 1.1]{Berthelot_Bloch_Esnault_On_Witt_vector_cohomology_for_singular_varieties}.

\begin{notation}
	Given an interval $I \subseteq \bR$ and an $F$--isocrystal $M$ on $K$, we denote by $M^I$ the maximal sub--$F$--isocrystal of $M$ with slopes in $I$. 
\end{notation}

\begin{lem}\label{slopes_of_rigid_cohomology}
	Let $X$ be a separated scheme of finite type over $k$. Then the slopes of its rigid cohomology groups lie in $[0, \dim X]$.
\end{lem}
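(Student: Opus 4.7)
The plan is to reduce to the case where $X$ is smooth and proper, where the slope bound is classical, and then propagate it to arbitrary varieties by alterations and cohomological descent.

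For smooth proper $X$ of dimension $d$, the Berthelot--Ogus comparison identifies $H^i_{\rig}(X/K)$ with $H^i_{\crys}(X/W(k))_{\bQ}$ as $F$--isocrystals. Mazur's theorem (Katz's conjecture, proved by Mazur and Ogus) bounds the Newton polygon by the Hodge polygon, placing the slopes of $H^i_{\crys}$ in $[0, i]$. Poincar\'e duality for crystalline cohomology, pairing $H^i$ with $H^{2d - i}$ into $H^{2d} \cong K(-d)$, then forces the slopes to also lie in $[i - d, d]$. Intersecting, the slopes of $H^i_{\rig}(X/K)$ belong to $[\max(0, i - d), \min(i, d)] \subseteq [0, \dim X]$, which is even stronger than what is asked.

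For a general separated finite type $k$--scheme $X$, I would apply de Jong's alteration theorem iteratively to produce a proper hypercover $X_\bullet \to X$ whose components $X_p$ are smooth and quasi--projective with $\dim X_p \leq \dim X$. Tsuzuki's cohomological descent theorem for rigid cohomology with respect to proper surjective morphisms then yields a convergent spectral sequence of $F$--isocrystals
$$E_1^{p, q} = H^q_{\rig}(X_p/K) \Rightarrow H^{p + q}_{\rig}(X/K).$$
Since slopes are preserved by subquotients of $F$--isocrystals, it suffices to bound the slopes of $H^q_{\rig}(X_p/K)$ for each smooth $X_p$. For this, I would embed each smooth $X_p$ into a smooth proper compactification $\bar X_p$ whose complement is a strict normal crossings divisor (once more using de Jong, since we are in positive characteristic), and use the localization long exact sequence combined with the Gysin isomorphism to reduce to the slopes of rigid cohomology of smooth proper varieties of dimension at most $\dim X_p$, already handled in the previous paragraph.

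The main obstacle is tracking the Frobenius structure (and hence the slope filtration) through cohomological descent and Gysin triangles; this is precisely where Tsuzuki's cohomological descent for proper surjective morphisms is essential. A cleaner alternative would be to invoke directly a theorem of Chiarellotto (or Kedlaya, or Caro) on slopes of rigid cohomology of possibly singular varieties, which packages exactly the bound $[\max(0, i - d), \min(i, d)]$ and has been established in the literature using a variant of the descent strategy sketched above.
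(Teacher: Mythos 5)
Your overall architecture---prove the bound for smooth (proper) varieties and then propagate it to arbitrary varieties by proper cohomological descent---is exactly the paper's, which handles the smooth case by citing Nakkajima's theorem on slope filtrations (the statement you re-derive via compactification, localization and Gysin maps) and the general case by ``the same hypercovering argument as in Tsuzuki's Theorem 5.1.1''. Your smooth proper input (Mazur--Ogus plus Poincar\'e duality, giving slopes in $[\max(0, i-d), \min(i,d)]$) is correct.

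The genuine gap is the assertion that de Jong's theorem produces a proper hypercovering $X_{\bullet} \to X$ with every $X_p$ smooth of dimension $\leq \dim X$. The hypercovering condition forces $X_1$ to surject properly onto $X_0 \times_X X_0$, and this fibre product can have dimension strictly larger than $\dim X$: if $X$ is the cone over a smooth projective surface $S$ and $X_0 \to X$ is the blow-up of the vertex, the exceptional divisor $E \cong S$ is contracted to a point, so $E \times_k E \subseteq X_0 \times_X X_0$ has dimension $4 > 3 = \dim X$ (and replacing $X_0$ by a \emph{finite} smooth covering, which would keep the fibre products $\dim X$--dimensional, is not possible for general singular $X$). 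With components of unbounded dimension, the descent spectral sequence $E_1^{p,q} = H^q_{\rig}(X_p/K) \Rightarrow H^{p+q}_{\rig}(X/K)$ still gives slopes $\leq q \leq n$ in total degree $n$, so your argument does prove the bound in degrees $n \leq \dim X$; but in degrees $\dim X < n \leq 2\dim X$, where rigid cohomology need not vanish, the columns $E_1^{p, q}$ with $q > \dim X$ can a priori contribute slopes up to $\min(q, \dim X_p) > \dim X$, and the claimed bound does not follow. The standard repair---and what the Tsuzuki-style argument the paper alludes to actually does---is an induction on $\dim X$: one descends along a single proper covering $X' \sqcup Z \to X$, with $X' \to X$ an alteration that is finite outside a closed subset $Z$ of smaller dimension, so that the non-degenerate parts of the iterated fibre products are either finite over the smooth $X'$ or supported over $Z$, where the inductive hypothesis applies; equivalently, one can use the abstract blow-up square exactly as in \autoref{iso_in_top_slopes_for_proper_birational_morphisms}. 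Finally, note that your reduction from smooth open to smooth proper (compactification with SNC boundary, Gysin triangles compatible with Frobenius) is itself the content of the Nakkajima reference; invoking it, or the Chiarellotto--Le Stum/Kedlaya results you mention, is the clean way to close that step rather than re-proving it.
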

\begin{proof}
	When $X$ is smooth, this is the special case when $Z = \emptyset$ in \cite[Theorem 16.11]{Nakkajima_Weight_filtration_and_slope_filtration}. In general, this follows by the same hypercovering argument as in the proof of \cite[Theorem 5.1.1]{Tsuzuki_Cohomological_descent_of_rigid_cohomology_for_proper_coverings}.
\end{proof}

\begin{lem}\label{iso_in_top_slopes_for_proper_birational_morphisms}
	Let $\pi \colon Y \to X$ be a proper birational morphism of varieties of dimension $d$. Then for all $i \geq 0$, the natural map \[ H^i_{\rig}(X/K)^{]d - 1, d]} \to H^i_{\rig}(Y/K)^{]d - 1, d]}  \] is an isomorphism.
\end{lem}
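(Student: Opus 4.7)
My plan is to apply proper cohomological descent for rigid cohomology (\cite{Tsuzuki_Cohomological_descent_of_rigid_cohomology_for_proper_coverings}) to the abstract blow-up square attached to $\pi$, and then to extract the top slope piece using \autoref{slopes_of_rigid_cohomology}. Since $\pi$ is birational, there is a maximal open $U \subseteq X$ over which $\pi$ restricts to an isomorphism; setting $Z \coloneqq (X \setminus U)_{\red}$ and $E \coloneqq \pi^{-1}(Z)_{\red}$, we have $\dim Z < d$ and $\dim E < d$, and the square
\[
\begin{tikzcd}
	E \arrow[r] \arrow[d] & Y \arrow[d, "\pi"] \\
	Z \arrow[r]           & X
\end{tikzcd}
\]
is cartesian with $\pi$ inducing an isomorphism over $X \setminus Z$.

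The first step is to derive, from Tsuzuki's descent theorem, a Mayer--Vietoris exact triangle of $F$-isocrystals
\[
\begin{tikzcd}
\RGamma_{\rig}(X/K) \arrow[r] & \RGamma_{\rig}(Y/K) \oplus \RGamma_{\rig}(Z/K) \arrow[r] & \RGamma_{\rig}(E/K) \arrow[r, "+1"] & {}.
\end{tikzcd}
\]
Concretely, one applies Tsuzuki's descent along the proper surjection $Y \sqcup Z \surj X$ and uses the fact that $\pi$ is an isomorphism over $U$ to collapse the higher terms of the \v{C}ech cosimplicial object onto $E$.

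Granting this triangle, the conclusion is immediate: by \autoref{slopes_of_rigid_cohomology} the slopes of $H^i_{\rig}(Z/K)$ and $H^i_{\rig}(E/K)$ all lie in $[0, d-1]$, so the functor $M \mapsto M^{]d-1, d]}$ (which is exact on $F$-isocrystals over $K$ via the slope filtration) annihilates both. Applying it to the long exact sequence of the triangle above collapses all $Z$- and $E$-contributions, and the resulting isomorphism is precisely the claim of the lemma, induced by the pullback $\pi^{*}$.

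The main obstacle I anticipate is producing the Mayer--Vietoris triangle cleanly from Tsuzuki's descent, which is phrased in terms of a \v{C}ech cosimplicial totalization rather than a four-term square; a careful identification of the totalization in the abstract blow-up setting is required. Should this proof path prove delicate, a natural alternative is to lift the computation into $D^b_{\hol}(X)$ via \autoref{section_review_D_modules}: combining the open-closed localization triangle attached to $Z \subseteq X \supseteq U$ with proper base change along $\pi$ and the comparison \autoref{Lazda_comparison} reduces the assertion to showing that the cone of $\bunit_X \to \pi_+ \bunit_Y$ is pushed forward from $Z$, after which the slope argument proceeds verbatim.
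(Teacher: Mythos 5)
Your proposal is correct and follows essentially the same route as the paper: an abstract blow-up square, a Mayer--Vietoris triangle from proper descent, and then killing the $Z$- and $E$-terms via \autoref{slopes_of_rigid_cohomology}. The one step you flag as delicate --- extracting the four-term homotopy pullback square from Tsuzuki's \v{C}ech-style descent --- is exactly what the paper resolves by citing \cite[Theorem 2.9]{Bhatt_Scholze_Projectivity_of_the_Witt_vector_Grassmannian}, which says that any $h$-sheaf sends abstract blow-up squares to homotopy cartesian squares; your fallback via arithmetic $\cD$-modules is precisely the author's original, heavier proof mentioned in the remark preceding the lemma.
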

\begin{rem}
	The proof below was kindly suggested to us by Bhargav Bhatt. Our original proof used the six--functor formalism on arithmetic $\cD$--modules, thereby making it rely on significantly heavier machinery than that below. In particular, the theory of arithmetic $\cD$--modules is only used to obtain the analogue of Artin vanishing.
\end{rem}
\begin{proof}
	Let $Z \inc Y$ be a closed subscheme of $Y$ over which $\pi$ is not an isomorphism (by assumption, $\dim(Z) < d)$. Since $X \mapsto \RGamma_{\rig}(X/K)$ satisfies descent for proper surjective morphisms by \cite{Tsuzuki_Cohomological_descent_of_rigid_cohomology_for_proper_coverings} (i.e. rigid cohomology defines an $h$--sheaf) it follows from \cite[Theorem 2.9]{Bhatt_Scholze_Projectivity_of_the_Witt_vector_Grassmannian} (see Proposition 2.8 in \emph{loc. cit.} for a non--derived version) that we have a homotopy pullback square
	\[ \begin{tikzcd}
		\RGamma_{\rig}(X/K) \arrow[d] \arrow[r] & \RGamma_{\rig}(Y/K) \arrow[d] \\
		\RGamma_{\rig}(Z/K) \arrow[r]                     & \RGamma_{\rig}(\pi^{-1}(Z)/K).         
	\end{tikzcd} \] 
	Equivalently, this means that we have a ``Mayer--Vietoris'' exact triangle \[ \begin{tikzcd}
				\RGamma_{\rig}(X/K) \arrow[r]  & \RGamma_{\rig}(Y/K) 	\oplus \RGamma_{\rig}(Z/K) \arrow[r] &  \RGamma_{\rig}(\pi^{-1}(Z)/K) \arrow[r, "+1"] &  {}
			\end{tikzcd} \] 
	Since $\pi^{-1}(Z)$ and $Z$ have no slopes $> d - 1$ in their rigid cohomology by \autoref{slopes_of_rigid_cohomology}, we deduce that $\pi^* \colon \RGamma_{\rig}(X/K)^{]d - 1, d]} \to \RGamma_{\rig}(Y/K)^{]d - 1, d]}$ is an isomorphism.
\end{proof}

For the definition and properties of compactly supported Witt vector cohomology, see \cite{Berthelot_Bloch_Esnault_On_Witt_vector_cohomology_for_singular_varieties}.

\begin{cor}\label{vanishing_rigid_cohomology_top_slopes}
	Let $Y$ be a variety of dimension $d$ admitting a proper birational morphism to an affine variety. Then for all $i > d$, \[ H^i_{\rig}(Y/K)^{]d - 1, d]} = 0. \] If $Y$ is in addition smooth, then for all $i < d$, \[ H^i_{\rig, c}(Y/K)^{[0, 1[} = H^i_{c}(Y, W\cO_{Y, K}) = 0. \]
\end{cor}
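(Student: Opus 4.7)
The plan is to deduce both statements directly from the machinery already built up: Artin vanishing for affines (\autoref{vanishing for affines}), the top-slope comparison for proper birational maps (\autoref{iso_in_top_slopes_for_proper_birational_morphisms}), and Poincaré duality for rigid cohomology. No new conceptual input is required; the work is in bookkeeping slopes and invoking Berthelot--Bloch--Esnault for the final identification.

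For the first vanishing, let $\pi \colon Y \to X$ be the given proper birational morphism, with $X$ affine. Then $X$ has dimension $d$ as well. By \autoref{iso_in_top_slopes_for_proper_birational_morphisms}, for every $i \geq 0$ the pullback
\[ H^i_{\rig}(X/K)^{]d-1, d]} \to H^i_{\rig}(Y/K)^{]d-1, d]} \]
is an isomorphism. Since $X$ is affine of dimension $d$, \autoref{vanishing for affines} gives $H^i_{\rig}(X/K) = 0$ for all $i > d$, so in particular its top-slope piece vanishes. Transporting this through the above isomorphism yields the claim.

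For the second vanishing, assume $Y$ is smooth of dimension $d$. Poincaré duality for rigid cohomology provides a perfect pairing
\[ H^i_{\rig, c}(Y/K) \otimes H^{2d - i}_{\rig}(Y/K) \longrightarrow K(-d), \]
compatible with Frobenius. In terms of Newton slopes, this pairing sends the slope-$\lambda$ part of $H^i_{\rig, c}(Y/K)$ to a pairing with the slope-$(d - \lambda)$ part of $H^{2d - i}_{\rig}(Y/K)$; consequently the slope-$[0, 1[$ part of $H^i_{\rig, c}(Y/K)$ is dual to the slope-$]d - 1, d]$ part of $H^{2d - i}_{\rig}(Y/K)$. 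For $i < d$ we have $2d - i > d$, and by the first part the latter group vanishes, so
\[ H^i_{\rig, c}(Y/K)^{[0, 1[} = 0. \]
Finally, the main comparison of \cite{Berthelot_Bloch_Esnault_On_Witt_vector_cohomology_for_singular_varieties} identifies $H^i_c(Y, W\cO_{Y, K})$ with $H^i_{\rig, c}(Y/K)^{[0, 1[}$, which gives the second equality in the statement.

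There is no genuine obstacle to this argument: the only mildly delicate point is correctly tracking how slopes transform under Poincaré duality (in particular the sign and the Tate twist $(-d)$, which together account for the interval $[0, 1[$ becoming $]d - 1, d]$). Everything else is a direct invocation of the results already established in the excerpt or cited in the literature.
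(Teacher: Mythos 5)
Your argument is correct and follows exactly the paper's proof: the first vanishing is \autoref{vanishing for affines} transported through \autoref{iso_in_top_slopes_for_proper_birational_morphisms}, and the second is Poincaré duality (with the slope bookkeeping against the twist $K(-d)$ exactly as you describe) followed by \cite[Theorem 1.1]{Berthelot_Bloch_Esnault_On_Witt_vector_cohomology_for_singular_varieties}. Nothing to add.
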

\begin{proof}
	The first statement follows immediately from \autoref{vanishing for affines} and \autoref{iso_in_top_slopes_for_proper_birational_morphisms}. Now, assume that $Y$ is smooth. Then by Poincaré duality (see e.g. \cite[Corollary 8.3.14]{Le_Stum_Rigid_cohomology} for the definition of the pairing and its compatibility with Frobenius actions, and \cite{Berthelot_Dualite_Poincare_et_Kunneth_en_cohom_rigide} for a proof that it is an isomorphism), we deduce that for all $i < d$, \[  H^i_{\rig, c}(Y/K)^{[0, 1[} = 0. \]
	We then conclude the proof by \cite[Theorem 1.1]{Berthelot_Bloch_Esnault_On_Witt_vector_cohomology_for_singular_varieties}.
\end{proof}

We thank Shunsuke Takagi for suggesting us the following generalization of \autoref{vanishing_rigid_cohomology_top_slopes}.

\begin{cor}\label{Takagi_generalisation}
	Let $Y$ be a smooth quasi--projective variety admitting a proper surjective morphism $\pi \colon Y \to X$ to an affine variety $X$. Then for all $i < \dim(X)$, we have that \[ H^i_{\rig, c}(Y/K)^{[0, 1[} = H^i_c(Y, W\cO_{Y, K}) = 0. \]
\end{cor}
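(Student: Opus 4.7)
The plan is to induct on the ``excess dimension'' $r := \dim Y - \dim X \geq 0$, cutting $Y$ down by smooth hypersurface sections. The aim is to reduce to \autoref{vanishing_rigid_cohomology_top_slopes} in the base case $r = 0$.

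\textbf{Base case.} When $r = 0$, $\pi$ is proper, surjective, and of equal dimensions, hence generically finite. I would Stein-factorize $\pi \colon Y \to Z \to X$; the first map is proper birational (geometrically connected and generically finite fibres are points), and the second is finite. Since $X$ is affine, so is $Z$, and the result follows by applying \autoref{vanishing_rigid_cohomology_top_slopes} to $Y \to Z$.

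\textbf{Inductive step.} For $r \geq 1$, the plan is to choose a smooth hypersurface section $H \subset Y$ (of sufficiently high degree in some projective embedding) with $\pi|_H \colon H \to X$ still surjective; since $r \geq 1$ the generic fibres of $\pi$ have positive dimension, so a suitably general $H$ meets them and the restriction remains proper surjective. Then $V := Y \setminus H$ is smooth and affine of dimension $\dim Y$, and combining \autoref{vanishing for affines} with Poincar\'e duality for rigid cohomology gives $H^i_{\rig, c}(V/K)^{[0, 1[} = 0$ for $i < \dim Y$. Plugging this into the excision long exact sequence
\[ \cdots \to H^i_{\rig, c}(V/K) \to H^i_{\rig, c}(Y/K) \to H^i_{\rig, c}(H/K) \to H^{i+1}_{\rig, c}(V/K) \to \cdots, \]
which is Frobenius-equivariant and so restricts to slope components, the inequalities $i + 1 \leq \dim X \leq \dim Y - 1$ make the two outer terms vanish on the $[0, 1[$ slope part; hence $H^i_{\rig, c}(Y/K)^{[0, 1[} \cong H^i_{\rig, c}(H/K)^{[0, 1[}$ for $i < \dim X$. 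The inductive hypothesis applies to $\pi|_H$ (since $\dim H - \dim X = r - 1$), yielding the desired vanishing on $Y$. A final invocation of \cite[Theorem 1.1]{Berthelot_Bloch_Esnault_On_Witt_vector_cohomology_for_singular_varieties} converts this into the promised statement on $H^i_c(Y, W\cO_{Y, K})$.

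The main obstacle I anticipate is ensuring the existence of the smooth hypersurface section $H$ with $\pi|_H$ surjective; this is delicate over finite ground fields where classical Bertini can fail. The expected workaround is to base-change to $\ok$, construct $H$ there via classical Bertini, and descend: rigid cohomology and its slope decomposition are compatible with the faithfully flat extension $K \to \Frac W(\ok)$, so the desired vanishing of $K$-vector spaces can be checked after this base change.
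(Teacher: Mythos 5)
Your proof is correct and follows essentially the same route as the paper: induction on $\dim Y-\dim X$, with the base case handled by Stein factorization and \autoref{vanishing_rigid_cohomology_top_slopes}, and the inductive step via a hypersurface section $H$, affineness of $Y\setminus H$, and the compactly supported excision triangle restricted to slopes in $[0,1[$. Your extra remark on circumventing Bertini over finite fields is a sensible addition that the paper leaves implicit.
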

\begin{proof}
	We will prove the result by induction on $\dim(Y) - \dim(X) \geq 0$. If $\dim(Y) - \dim(X) = 0$, then given that we may replace $\pi \colon Y \to X$ by its Stein factorization, the statement is a particular case of \autoref{vanishing_rigid_cohomology_top_slopes}. If $\dim(Y) - \dim(X) > 0$, take a general hyperplane section $H \inc Y$, and set $V \coloneqq Y \setminus H$. By induction, we know that $H^i_{\rig, c}(H/K)^{[0, 1[} = 0$ for all $i < \dim(X)$. By \cite[3.1.(iii)]{Berthelot_Geometrie_rigide_et_cohomologie_des_varieties_algebriques_de_caracteristique_p}, there is an exact triangle \[ \begin{tikzcd}
		{\RGamma_{\rig, c}(V/K)} \arrow[rr] &  & {\RGamma_{\rig, c}(Y/K)} \arrow[rr] &  & {\RGamma_{\rig, c}(H/K)} \arrow[rr, "+1"] &  & {}
	\end{tikzcd} \] so given that $V$ is affine, we conclude by taking slopes and applying \autoref{vanishing_rigid_cohomology_top_slopes} to $V$ that $H^i_{\rig, c}(Y/K)^{[0, 1[} = 0$ for all $i < \dim(X)$.
\end{proof}

Before moving further, let us prove a finiteness result, well--known to experts.

\begin{lem}\label{finiteness_p_torsion}
	Let $Y$ be a proper variety, let $I \inc \cO_Y$ be a quasi--coherent ideal, let $i \geq 0$, and let $M$ denote the $W(k)$--submodule of $H^i(Y, WI)$ of elements which are $p$--power torsion. Then $p^e \cdot M = 0$ for some $e > 0$.
\end{lem}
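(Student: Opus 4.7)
Set $A = H^i(Y, WI)$, so $M = A[p^\infty]$. My plan is to reduce to the structure-sheaf case via a short exact sequence, and then invoke known finiteness of Witt vector cohomology.

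First, let $\iota\colon Z \inj Y$ be the closed subscheme cut out by $I$. For each $n$, the surjection $W_n\cO_Y \twoheadrightarrow \iota_*W_n(\cO_Y/I)$ has kernel $W_nI$, giving a short exact sequence of sheaves on $Y$. The transition maps $W_{n+1}I \twoheadrightarrow W_nI$ are surjective, so Mittag--Leffler holds and passing to the limit yields the exact sequence
\[ 0 \to WI \to W\cO_Y \to \iota_*W\cO_Z \to 0. \]
The associated long exact sequence exhibits $A$ as an extension of a subobject of $H^i(Y, W\cO_Y)$ by a quotient of $H^{i-1}(Z, W\cO_Z)$. Since the property of having bounded $p$-power torsion is preserved under extensions and subquotients, it suffices to prove the statement with $I = \cO_{Y'}$ for any proper variety $Y' \in \{Y, Z\}$.

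For the case $I = \cO_{Y'}$, the needed input is that $H^j(Y', W\cO_{Y'})$ has bounded $p$-power torsion. This is the classical structure theorem for Witt vector cohomology of a proper variety: the $W(k)$-module decomposes as a finitely generated part plus a torsion part of bounded $V$-exponent, and since $V^e F^e = p^e$ as operators, bounded $V$-exponent immediately gives bounded $p$-exponent (while the finitely generated summand trivially has bounded $p$-torsion, as $W(k)$ is a discrete valuation ring). For $Y'$ smooth and proper this is due to Illusie and Illusie--Raynaud~\cite{Illusie_Complexe_de_de_Rham_Witt_et_cohomologie_cristalline}. For the general case, it can be deduced by combining (i) the derived $p$-completeness of $H^j(Y', W\cO_{Y'})$, which holds since it is computed by the derived limit of the perfect complexes $R\Gamma(Y', W_n\cO_{Y'})$ of $W_n(k)$-modules, and (ii) the finite-dimensionality of the rationalization $H^j(Y', W\cO_{Y'}) \otimes_{W(k)} K$, which follows via comparison with rigid cohomology as in \cite{Berthelot_Bloch_Esnault_On_Witt_vector_cohomology_for_singular_varieties}.

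The main obstacle is the singular case of the structural result, where the classical Hodge--Witt decomposition is not directly available; this is where the finiteness of rigid cohomology of a (possibly singular) proper variety is genuinely needed. The reduction step via $Z$ itself is essentially formal.
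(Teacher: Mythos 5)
Your reduction has a genuine gap at the dévissage step. From the long exact sequence you get a short exact sequence
\[ 0 \to Q \to H^i(Y, WI) \to S \to 0, \]
with $S \inc H^i(Y, W\cO_Y)$ and $Q$ a \emph{quotient} (a cokernel) of a map into $H^{i-1}(Z, W\cO_Z)$. Bounded $p$--power torsion does pass to submodules and to extensions, but it is \emph{not} preserved under quotients of $W(k)$--modules: the surjection $\bigoplus_{n \geq 1} W(k) \surj \bigoplus_{n \geq 1} W(k)/p^n$ has torsion--free source and a target with unbounded $p$--torsion. So knowing the statement for $H^{i-1}(Z, W\cO_Z)$ does not control $Q$, and the claimed reduction to the structure--sheaf case fails as stated. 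To control the cokernel you need to remember that the map is $V$--equivariant and that everything is finitely generated over the (left Noetherian) ring $W_{\sigma}\llbracket V\rrbracket$ — but that is exactly the argument of \cite[Proposition 2.10]{Berthelot_Bloch_Esnault_On_Witt_vector_cohomology_for_singular_varieties}, which applies \emph{directly} to $H^i(Y, WI)$ (this is how the paper proceeds: finite generation over $W_{\sigma}\llbracket V\rrbracket$ implies the $p$--power--torsion submodule is itself finitely generated over that ring, hence killed by a single $p^e$). Once you invoke that, the reduction to $W\cO$ buys nothing.

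The second weak point is your proposed proof of the structural input in the singular case. The implication ``derived $p$--complete with finite--dimensional rationalization $\Rightarrow$ bounded $p$--power torsion'' is not a standard fact and you do not prove it; the standard completeness statements of this type require finiteness of $N/pN$, not of $N[1/p]$, and it is not clear the version you need is even true. Again, the finiteness that actually does the work in \cite{Berthelot_Bloch_Esnault_On_Witt_vector_cohomology_for_singular_varieties} is finite generation over $W_{\sigma}\llbracket V\rrbracket$ (proved there by dévissage on the $V$--filtration), combined with Noetherianity of that ring. I would recommend replacing both steps by a direct appeal to that argument for $WI$ itself.
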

\begin{proof}
	We include a proof for the sake of completeness (we will mimic the argument in \cite[Proposition 2.10]{Berthelot_Bloch_Esnault_On_Witt_vector_cohomology_for_singular_varieties}). Consider the ring $R \coloneqq W(k)\llbracket V \rrbracket$, subject to the non--commutativity rule $aV = VF(a)$ for all $a \in W(k)$. Since $H^i(Y, WI)$ is separated and complete for the $V$-adic topology, it is naturally endowed with a left $R$--module structure. Note that $H^i(Y, WI)$ is finitely generated as a left $R$--module, since $H^i(Y, WI)/V \inc H^i(Y, I)$ is a finite--dimensional $k$-vector space by properness of $Y$. By (left--)Noetherianity of $R$ (\cite[Lemma 2.3]{Bloch_Algebraic_K_theory_and_crys_cohomology}), the $R$--submodule $M$ is also finitely generated, say with generators $m_1, \dots, m_r \in M$. If $e > 0$ satifies that $p^em_i = 0$ for all $1 \leq i \leq r$, then surely $p^eM = 0$.
\end{proof}

Given a $\bQ$--Cartier divisor $D$ on a normal proper variety $Y$, we denote by $\kappa(Y, D) \in \{-\infty, 0, \dots, \dim(Y)\}$ its \emph{Iitaka dimension} (i.e. the dimension of the image of the rational map induced by the linear system $|mD|$ for $m$ sufficiently big and divisible). For example $D$ is big if and only if $\kappa(Y, D) = \dim(Y)$. We also say that $D$ is \emph{semi--ample} if for some $m > 0$, the linear system $|mD|$ is base--point free. \\

Here is our analogue of the usual Kawamata--Viehweg vanishing theorem in characteristic zero. Our original statement only treated the big and semi--ample case. We thank Shunsuke Takagi for suggesting us this generalization.

\begin{thm}\label{Witt_KVV}
	Let $Y$ be a normal projective variety and let $D$ be an effective, semi--ample divisor on $Y$ such that $Y \setminus D$ is smooth. Then for all $i < \kappa(Y, D)$, we have \[ H^i(Y, WI_{D, \bQ}) = 0. \]
\end{thm}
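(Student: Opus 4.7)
The plan is to leverage the Iitaka fibration associated to $D$ in order to reduce to \autoref{Takagi_generalisation}. Since $D$ is effective and semi--ample, after replacing $D$ by a suitable multiple $mD$, the linear system $|mD|$ is base--point free. Taking the Stein factorization of the resulting morphism yields a proper surjective morphism $\pi \colon Y \to Z$ with connected fibres onto a normal projective variety $Z$ of dimension $\kappa \coloneqq \kappa(Y,D)$, together with an ample divisor $A$ on $Z$ such that $\pi^*A \sim mD$. First I would note that raising a defining section of $D$ to the $m$--th power gives a section of $\cO_Y(\pi^*A)$ which, by the projection formula and $\pi_*\cO_Y = \cO_Z$, descends to a section of $\cO_Z(A)$; its zero locus $E$ is an effective divisor on $Z$ with $\supp \pi^{-1}(E) = \supp D$. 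Setting $V \coloneqq Y \setminus D$ and $U \coloneqq Z \setminus E$, the restriction $\pi|_V \colon V \to U$ is then proper and surjective, where $U$ is affine (as the complement of an ample divisor in a projective variety) of dimension $\kappa$, and $V$ is smooth by hypothesis.

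Next I would apply \autoref{Takagi_generalisation} to the morphism $V \to U$, obtaining
\[ H^i_c(V, W\cO_{V, K}) = 0 \quad \text{for every } i < \kappa. \]
By the construction of compactly supported Witt cohomology in \cite{Berthelot_Bloch_Esnault_On_Witt_vector_cohomology_for_singular_varieties} applied to the compactification $V \subset Y$, this group identifies (up to tensoring with $K$) with $H^i(Y, WI_D)$; hence $H^i(Y, WI_D)$ is $p$--power torsion for every $i < \kappa$. To conclude, I would invoke \autoref{finiteness_p_torsion} applied to the proper variety $Y$ and the quasi--coherent ideal $I_D$: this ensures that the $p$--power torsion in each $H^i(Y, WI_D)$ is uniformly bounded, i.e.\ killed by some fixed $p^e > 0$. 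Unravelling the definition of the quotient category $(\cdot)_{\bQ}$, this is precisely the statement $H^i(Y, WI_{D, \bQ}) = 0$.

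The main obstacle I expect is the precise identification $H^i_c(V, W\cO_{V, K}) \cong H^i(Y, WI_D) \otimes_{W(k)} K$. The paper's definition of $WI_D$ is componentwise and designed for an arbitrary effective Cartier divisor, whereas the Berthelot--Bloch--Esnault comparison is most naturally stated in terms of the reduced closed complement. One would need to check that, at least up to uniformly bounded $p$--torsion (which is harmless after \autoref{finiteness_p_torsion}), the two constructions coincide; alternatively, one can first establish the vanishing for $WI_{D_{\mathrm{red}}}$ and then propagate it to $WI_D$ via a filtration of $\cO_Y / I_D$ by powers of $I_{D_{\mathrm{red}}} / I_D$, applying the conclusion successively to each graded piece.
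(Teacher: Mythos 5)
Your proposal is correct and follows essentially the same route as the paper: pass to the morphism defined by $|mD|$ with $\pi_*\cO_Y=\cO_Z$, realize $Y\setminus D$ as the preimage of the affine complement of an ample divisor on the base of dimension $\kappa(Y,D)$, apply \autoref{Takagi_generalisation}, and upgrade the resulting $K$--vanishing to the $(\cdot)_{\bQ}$--vanishing via \autoref{finiteness_p_torsion}. The only point you flag as a potential obstacle --- identifying $H^i_c(Y\setminus D, W\cO_{Y\setminus D,K})$ with $H^i(Y,WI_D)\otimes_{W(k)}K$ --- is not an issue: in \cite{Berthelot_Bloch_Esnault_On_Witt_vector_cohomology_for_singular_varieties} this is essentially the definition of compactly supported Witt vector cohomology, and is shown there to be independent of the choice of ideal cutting out the complement once one tensors with $K$, which is exactly the level of precision the argument requires.
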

\begin{rem}\label{rem_proper_proj}
	The proof shows that if $D$ is already big and semi--ample, then we only need $Y$ to be proper. This is probably still true in the non--big case, but we will not treat this question here.
\end{rem}

\begin{proof}
	Let $m \gg 0$ be a big and divisible enough integer so that the linear system $|mD|$ defines a proper morphism $\pi \colon Y \to X$ to a projective variety $X$, and let $A$ be an effective, Cartier and very ample divisor on $X$ such that $mD \sim \pi^*A$. Up to making $m$ bigger, we may assume that $\pi_*\cO_Y = \cO_X$. Then, the projection formula gives that pulling back induces an isomorphism $H^0(X, \cO_X(A)) \cong H^0(Y, \cO_Y(mD))$ so given that $mD$ is effective, there exists a Cartier, effective, very ample divisor $A' \sim A$ such that we have an \emph{equality} of divisors $mD = \pi^*A'$. 
	
	Note that $\dim(X) = \kappa(Y, D)$ by definition and that $X \setminus A'$ is affine, so $Y \setminus D$ satisfies the properties of \autoref{vanishing_rigid_cohomology_top_slopes}. In particular, we deduce that for all $i < \kappa(Y, D)$, we have \[ 0 = H^i_c(Y \setminus D , W\cO_{Y \setminus D, K}) \expl{=}{by definition, see \cite[Section 2.9]{Berthelot_Bloch_Esnault_On_Witt_vector_cohomology_for_singular_varieties}} H^i(Y, WI_D) \otimes_{W(k)} K. \]
	We then conclude by \autoref{finiteness_p_torsion} that also $H^i(Y, WI_{D, \bQ}) = 0$ for all $i < \kappa(Y, D)$.
\end{proof}

%
%

\begin{defn}
	Let $X$ be a topological space, and let $\{\cF^{\bullet}_n\}_{n \geq 1}$ be a collection of objects in the derived category of abelian sheaves of $X$ on some site.
	\begin{itemize}
		\item  We say that the sheaves $\cF^{\bullet}_n$ are of \emph{uniform $p$--power torsion} if there exists $e > 0$ such that for all $n \geq 1$, the map \[ \cF_n^{\bullet} \xto{p^e} \cF_n^{\bullet} \] is zero.
		\item We say that a collection of maps $f_n \colon \cF^{\bullet}_n \to \cG_n^{\bullet}$ is an \emph{isomorphism up to uniform $p$--power torsion} if the objects $\Cone(f_n)$ are of uniform $p$--power torsion.
	\end{itemize}
\end{defn}
\begin{rem}\label{rem_unif_p_power}
	\begin{enumerate}
		\item If each $\cF_n^{\bullet}$ are actually sheaves, then this is equivalent to saying that for some $e > 0$, $p^e\cdot \cF_n^{\bullet} = 0$ for all $n \geq 1$.
		\item\label{derived_cat_lemma} Note that if there exist $a \leq b \in \bZ$ such that $\cH^i(\cF_n^{\bullet}) = 0$ for all $i < a$ or $i > b$, then $\{\cF_n^{\bullet}\}_{n \geq 1}$ is of uniform $p$--power torsion if and only if for all $i \in \bZ$, the sheaves $\{\cH^i(\cF_n^{\bullet})\}_{n \geq 1}$ are of uniform $p$--power torsion. Indeed, this is a consequence of the same method as in the proof of \cite[Claim 7.5]{Baudin_Bernasconi_Kawakami_Frobenius_GR_fails}.
	\end{enumerate}
\end{rem}

We only phrase the following result for big and semi--ample divisors, although it holds in the same generality as in \autoref{Witt_KVV}.

\begin{cor}\label{Witt_KVV_finite_level}
	Let $Y$ be a normal proper variety of dimension $d$, and let $D$ be an effective, big and semi--ample divisor on $Y$ such that $Y \setminus D$ is smooth. Then for all $i < d$, the groups $H^i(Y, W_nI_D)$ are of uniform $p$--power torsion. 
	
	If $Y$ is in addition Cohen--Macaulay, then the groups $H^i(Y, W_n\omega_{(Y, D)})$ are also of uniform $p$--power torsion.
\end{cor}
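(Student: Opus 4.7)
The plan is to derive both statements from \autoref{Witt_KVV}. For the first statement, I start with the short exact sequence of abelian sheaves on $Y$
\[ 0 \to F^n_*WI_D \xrightarrow{V^n} WI_D \to W_nI_D \to 0, \]
whose exactness follows from the injectivity of $V$ on Witt vectors and the description of the kernel of the truncation $WI_D \to W_nI_D$ as the image of $V^n$. Taking the long exact sequence in cohomology and identifying $H^j(Y, F^n_*WI_D) = H^j(Y, WI_D)$ as abelian groups, one gets for each $i$ a short exact sequence
\[ 0 \to \coker\bigl(V^n \colon H^i(Y, WI_D) \to H^i(Y, WI_D)\bigr) \to H^i(Y, W_nI_D) \to \ker\bigl(V^n \colon H^{i+1}(Y, WI_D) \to H^{i+1}(Y, WI_D)\bigr) \to 0. \]

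By \autoref{Witt_KVV}, some fixed $p^e$ annihilates $H^j(Y, WI_D)$ for every $j < d$. Hence for $i < d - 1$, both outer terms are killed by $p^e$, which gives the uniform bound $p^{2e}$ on $H^i(Y, W_nI_D)$. The main obstacle is the boundary index $i = d - 1$, where $H^d(Y, WI_D)$ need not itself be $p$-power torsion. The saving observation is that the $V$-torsion of $H^d(Y, WI_D)$ is contained in its $p$-power torsion submodule, because $V^n x = 0$ implies $p^n x = F^n V^n x = 0$. By \autoref{finiteness_p_torsion}, the $p$-power torsion submodule is uniformly $p^{e''}$-killed, supplying the missing bound on $H^{d-1}(Y, W_nI_D)$.

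For the Cohen--Macaulay part, the plan is to apply Grothendieck duality to the proper morphism $g \colon Y \to \Spec W_n(k)$. By Cohen--Macaulayness, $W_n\omega_Y^\bullet = W_n\omega_Y[d]$, and \autoref{logarithmic_works_well_when_CM} identifies $\cR\HHom_{W_n\cO_Y}(W_nI_D, W_n\omega_Y)$ with $W_n\omega_{(Y,D)}$ concentrated in degree zero. Duality therefore yields
\[ R\Gamma(Y, W_n\omega_{(Y,D)})[d] \cong \RHom_{W_n(k)}\bigl(R\Gamma(Y, W_nI_D), W_n(k)\bigr). \]
Since $W_n(k)$ is an Artinian Gorenstein ring, it is injective as a module over itself, so $\Ext^{>0}_{W_n(k)}(-, W_n(k))$ vanishes on finitely generated modules. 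The universal coefficient spectral sequence degenerates, and one obtains
\[ H^i(Y, W_n\omega_{(Y,D)}) \cong \Hom_{W_n(k)}\bigl(H^{d-i}(Y, W_nI_D), W_n(k)\bigr), \]
so that the uniform $p$-power annihilator from the first part transfers directly to $H^i(Y, W_n\omega_{(Y,D)})$.
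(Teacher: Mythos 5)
Your proof is correct and follows essentially the same route as the paper's: the same short exact sequence $0 \to F^n_*WI_D \xrightarrow{V^n} WI_D \to W_nI_D \to 0$, the same treatment of the boundary case $i = d-1$ via the inclusion of $\ker(V^n)$ into the $p$--power torsion of $H^d(Y, WI_D)$ combined with \autoref{finiteness_p_torsion}, and the same reduction of the Cohen--Macaulay statement to the first one by Grothendieck duality and \autoref{logarithmic_works_well_when_CM} (which you merely spell out in more detail, correctly using self--injectivity of $W_n(k)$). The only cosmetic remark is that your duality computation controls $H^i(Y, W_n\omega_{(Y,D)})$ precisely for $i>0$ (dual to $H^{d-i}$ with $d-i<d$), which is the range in which the corollary is actually applied later in the paper.
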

\begin{proof}
	By Grothendieck duality and \autoref{logarithmic_works_well_when_CM}, it is enough to show the first statement. Thanks to the short exact sequences \[ \begin{tikzcd}
		0 \arrow[rr] &  & F^n_*WI_D \arrow[rr, "V^n"] &  & WI_D \arrow[rr] &  & W_nI_D \arrow[rr] &  & 0,
	\end{tikzcd}\] the result is an immediate consequence of \autoref{Witt_KVV} (see \autoref{rem_proper_proj}) when $i \leq d - 2$. To see the case when $i = d - 1$, note that the image of $H^{d - 1}(Y, W_nI_D) \to H^d(Y, F^n_*WI_D)$ is $p^n$--torsion, and hence $p^e$--torsion for some $e > 0$ independent of $n$ by \autoref{finiteness_p_torsion}. Hence, the case $i = d - 1$ is also a consequence of \autoref{Witt_KVV}.
\end{proof} 

\section{Witt--Grauert--Riemenschneider vanishing}

\subsection{First reduction}

Throughout, fix an integer $d \geq 1$. We want to reduce the proof of Witt--GR vanishing from general varieties to projective ones, in order to be able to apply Witt--KVV. 

\begin{situation}\label{basic_setup}
	We have an alteration $\pi \colon Y \to X$, with $Y$ smooth of dimension $d$.
\end{situation}

\begin{notation}
	Define $($Witt-GR$_d)$ (resp. $(\bQ_p$-GR$_d))$ to be the following statement: in \autoref{basic_setup}, the sheaves $R^i\pi_*W_n\omega_Y$ (resp. $R^i\pi_*W_n\omega_Y^{\perf}$) are of uniform $p$--power torsion for all $i > 0$.
\end{notation}

\begin{lem}\label{name_Q_p_GR}
	In \autoref{basic_setup}, the following are equivalent:
	\begin{enumerate}
		\item\label{first} for all $i > 0$, $R^i\pi_*W\omega_{Y, \bQ} = 0$;
		\item\label{second} for all $i > 0$, the sheaves $R^i\pi_*W_n\omega_Y^{\perf}$ are of uniform $p$--power torsion.
	\end{enumerate}
\end{lem}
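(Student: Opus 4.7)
The plan is to reduce everything to the exact triangle of \autoref{cone_of_p_Witt_omega}. Since $Y$ is smooth of dimension $d$, it is Cohen--Macaulay, and the same argument as in the proof of \autoref{logarithmic_works_well_when_CM} (taking $D = 0$) gives $W_n\omega_Y^{\bullet} \cong W_n\omega_Y[d]$ for every $n \geq 1$. Combined with \autoref{Witt_omega_Mittag_Leffler}, this upgrades to $W\omega_Y^{\bullet} \cong W\omega_Y[d]$; exactness of perfection on coherent $W_n$--Cartier modules (\autoref{Gabber_finiteness}) yields $(W_m\omega_Y^{\bullet})^{\perf} \cong W_m\omega_Y^{\perf}[d]$. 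Desuspending the triangle of \autoref{cone_of_p_Witt_omega} by $d$ and applying $R\pi_*$ then produces, for every $i \geq 0$ and $m \geq 1$, a short exact sequence
\[ 0 \to R^i\pi_*W\omega_Y / p^m R^i\pi_*W\omega_Y \to R^i\pi_*W_m\omega_Y^{\perf} \to (R^{i+1}\pi_*W\omega_Y)[p^m] \to 0. \]

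For the implication \ref{first} $\Rightarrow$ \ref{second}: since $R^i\pi_*$ vanishes for $i$ sufficiently large, condition \ref{first} is equivalent to the existence of a single $e > 0$ with $p^e \cdot R^i\pi_*W\omega_Y = 0$ for all $i > 0$. For $i > 0$, both outer terms of the short exact sequence above are then $p^e$--torsion, hence the middle term $R^i\pi_*W_m\omega_Y^{\perf}$ is $p^{2e}$--torsion, uniformly in $m$.

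For the implication \ref{second} $\Rightarrow$ \ref{first}: I would use $W\omega_Y \cong \Rlim_n W_n\omega_Y^{\perf}$ from \autoref{Witt_omega_Mittag_Leffler}, together with the fact that $R\pi_*$ commutes with $\Rlim$ (being a right adjoint), to obtain $R\pi_*W\omega_Y \cong \Rlim_n R\pi_*W_n\omega_Y^{\perf}$. The key observation is that the inverse system $\{R^i\pi_*W_n\omega_Y^{\perf}\}_n$ is Mittag--Leffler for every $i$: since $\pi$ is proper, each $R^i\pi_*W_n\omega_Y$ is a coherent $W_n$--Cartier module, and a standard Milnor argument (using Mittag--Leffler of $\{F^e_*W_n\omega_Y\}_e$ from \autoref{Gabber_finiteness}) identifies $R^i\pi_*(W_n\omega_Y^{\perf})$ with $(R^i\pi_*W_n\omega_Y)^{\perf}$, at which point \autoref{general_Mittag_Leffler_property} applies. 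Consequently the higher derived limits vanish, and the Milnor sequence collapses to $R^i\pi_*W\omega_Y \cong \lim_n R^i\pi_*W_n\omega_Y^{\perf}$. Under \ref{second} the right--hand side is $p^{e'}$--torsion for $i > 0$, yielding \ref{first}.

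The only delicate point is the Mittag--Leffler analysis of the pushforward tower in the backward direction; everything else is a direct application of the triangle of \autoref{cone_of_p_Witt_omega}, combined with the smoothness of $Y$ to collapse the various dualizing complexes to shifts of honest sheaves.
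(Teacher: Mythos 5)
Your proposal is correct and follows essentially the same route as the paper: the forward direction via the long exact sequence obtained by applying $R\pi_*$ to the triangle of \autoref{cone_of_p_Witt_omega}, and the backward direction via $W\omega_Y \cong \Rlim_n W_n\omega_Y^{\perf}$ together with the Mittag--Leffler property of $\{(R^i\pi_*W_n\omega_Y)^{\perf}\}_n$ coming from \autoref{general_Mittag_Leffler_property}. The only cosmetic difference is that you spell out the collapse of the dualizing complexes to shifted sheaves and the commutation of perfection with proper pushforward, which the paper handles implicitly or by citation.
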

\begin{proof}
	Assume first \autoref{first}. By definition, this means that for some $e > 0$, we have that $p^e\cdot R^i\pi_*W\omega_Y = 0$ for all $i > 0$. Since \autoref{cone_of_p_Witt_omega} gives rise to long exact sequences \[ \begin{tikzcd}
		\dots  \arrow[r] & R^i\pi_*W\omega_Y \arrow[r, "p^n"] & R^i\pi_*W\omega_Y \arrow[r] & R^i\pi_*W_n\omega_Y^{\perf} \arrow[r] & R^{i + 1}\pi_*W\omega_Y \arrow[r, "p^n"] & \dots
	\end{tikzcd} \] we deduce that each $R^i\pi_*W_n\omega_Y^{\perf}$ is automatically $p^{2e}$-torsion.

	Now, assume \autoref{second}. Combining \cite[Lemma 4.4.2.(b)]{Baudin_Duality_between_Witt_Cartier_crystals_and_perverse_sheaves} and \autoref{general_Mittag_Leffler_property}, we deduce that each inverse system $\{R^i\pi_*W_n\omega_Y^{\perf}\}_{n \geq 1}$ satisfies the Mittag--Leffler condition. Hence, we deduce by \autoref{Witt_omega_Mittag_Leffler} that \[ R^i\pi_*W\omega_Y = \lim_n R^i\pi_*W_n\omega_Y^{\perf}, \] so this direction is immediate.
\end{proof}

\begin{rem}\label{explanation_name}
	We can now explain why we decided to call the vanishing of the sheaves $R^i\pi_*W\omega_{Y, \bQ}$ as $\bQ_p$--GR vanishing. By the duality theory developed in \cite{Baudin_Duality_between_Witt_Cartier_crystals_and_perverse_sheaves}, $\bQ_p$--GR vanishing is equivalent to the fact that the sheaves $R\pi_*(\bZ/p^n\bZ)[\dim Y]$ are perverse up to uniform $p$--power torsion. In particular, if a theory of perverse $\bQ_p$--sheaves analogous to the theory of perverse $\bQ_l$--sheaves of \cite{Beilinson_Bernstein_Deligne_Faisceaux_Pervers} was developed, then $\bQ_p$--GR should be equivalent to the fact that $R\pi_*\bQ_p[\dim Y]$ is a perverse sheaf. In particular, this vanishing is much more related to the sheaf $\bQ_p$ than to the bigger sheaf $W\cO$.
\end{rem}

\begin{lem}\label{Patakfalvi_Zdanowicz_criterion}
	Let $\pi \colon Y \to X$ be a proper morphism of varieties. Then the following are equivalent:
	\begin{enumerate}
		\item the natural map $W\cO_{X, \bQ} \to R\pi_*W\cO_{Y, \bQ}$ is an isomorphism;
		\item the natural maps $W_n\cO_Y \to R\pi_*W_n\cO_X$ are isomorphisms up to uniform $p$--power torsion.
	\end{enumerate}
\end{lem}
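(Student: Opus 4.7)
I assume the statement of condition (2) contains a typo and the natural map in question is $W_n\cO_X \to R\pi_*W_n\cO_Y$, matching the direction of (1).

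The plan is to analyze both conditions through the mapping cones
\[ C_\infty \coloneqq \Cone(W\cO_X \to R\pi_*W\cO_Y) \quad \text{and} \quad C_n \coloneqq \Cone(W_n\cO_X \to R\pi_*W_n\cO_Y). \]
Both have cohomology concentrated in a bounded range independent of $n$ (since $\pi$ is proper), so by \autoref{rem_unif_p_power}(\ref{derived_cat_lemma}) uniform $p$-power torsion at the derived level coincides with uniform $p$-power torsion on each cohomology sheaf. The comparison between the finite and infinite level will come from the standard short exact sequence
\[ 0 \to F^n_*W\cO \xto{V^n} W\cO \to W_n\cO \to 0, \]
holding on both $Y$ and $X$ and compatible under $\pi$. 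Applying $R\pi_*$ on the $Y$-side and comparing with the $X$-sequence yields an exact triangle
\[ F^n_*C_\infty \to C_\infty \to C_n \xto{+1} . \]

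For (1)~$\Rightarrow$~(2), assumption (1) means each $\cH^i(C_\infty)$ is killed by some $p^{e_i}$; by boundedness we may take a single $e$ that works for all $i$. Since $F^n_*$ is exact, $\cH^i(F^n_*C_\infty) = F^n_*\cH^i(C_\infty)$ is also killed by $p^e$, and the long exact sequence of the triangle above immediately gives $p^{2e} \cdot \cH^i(C_n) = 0$ uniformly in $i$ and $n$, which is (2).

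For (2)~$\Rightarrow$~(1), the plan is to identify $C_\infty \cong R\lim_n C_n$. Surjectivity of the restriction maps $W_{n+1}\cO \twoheadrightarrow W_n\cO$ makes the inverse systems Mittag--Leffler, so $W\cO_X = R\lim_n W_n\cO_X$ and $W\cO_Y = R\lim_n W_n\cO_Y$. Since $R\pi_*$ commutes with $R\lim$ (being a right adjoint), also $R\pi_*W\cO_Y = R\lim_n R\pi_*W_n\cO_Y$, and taking cones yields $C_\infty = R\lim_n C_n$. If the map $p^e \colon C_n \to C_n$ is zero for every $n$, then functoriality of $R\lim$ forces $p^e \colon C_\infty \to C_\infty$ to be zero, so each $\cH^i(C_\infty)$ is killed by $p^e$, which by definition is (1).

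The only non--formal input is the interchange $R\pi_* \circ R\lim \cong R\lim \circ R\pi_*$ in the second direction; the rest is bookkeeping with the triangle and the uniform exponents. The argument is close in spirit to \cite[Proposition 3.10]{Patakfalvi_Zdanowicz_Ordinary_varieties_with_trivial_canonical_bundle_are_not_uniruled}, and I do not foresee any deeper obstacle.
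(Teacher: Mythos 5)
Your proposal is correct in substance, but it takes a different route from the paper only in the sense that the paper does not argue at all: its proof of this lemma is a one--line citation of \cite[Proposition 3.10]{Patakfalvi_Zdanowicz_Ordinary_varieties_with_trivial_canonical_bundle_are_not_uniruled} combined with \autoref{rem_unif_p_power}.\autoref{derived_cat_lemma}, so what you have written is essentially a self--contained reproof of the cited result. You are right that condition (2) contains a typo and should read $W_n\cO_X \to R\pi_*W_n\cO_Y$. Your triangle $F^n_*C_\infty \to C_\infty \to C_n \xto{+1}$, obtained from $0 \to F^n_*W\cO \xto{V^n} W\cO \to W_n\cO \to 0$ on both sides, is the right mechanism, and the direction (1)$\Rightarrow$(2) is clean (boundedness of the cones, guaranteed by properness of $\pi$, is what lets you pass between derived--level and cohomology--level uniform torsion via \autoref{rem_unif_p_power}). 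The one step you should tighten is in (2)$\Rightarrow$(1): a morphism of towers that is termwise zero in the derived category does not formally have zero $R\lim$, so ``functoriality of $R\lim$'' alone does not give $p^e \cdot C_\infty = 0$. Either run multiplication by $p^e$ through the product triangle $R\lim C_n \to \prod_n C_n \to \prod_n C_n \xto{+1}$ defining $R\lim$ (the two outer components are genuinely zero, whence $p^{2e} = 0$ on $R\lim C_n$ by the standard two--out--of--three nilpotence argument for endomorphisms of a triangle), or observe that the $\cH^i(C_n)$ are coherent $W_n\cO_X$--modules, so the systems are Mittag--Leffler and $\cH^i(C_\infty) = \lim_n \cH^i(C_n)$ exactly; either way the uniform exponent at worst doubles, which is harmless. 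With that repair your argument is complete.
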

\begin{proof}
	This follows from \cite[Proposition 3.10]{Patakfalvi_Zdanowicz_Ordinary_varieties_with_trivial_canonical_bundle_are_not_uniruled} and \autoref{rem_unif_p_power}.\autoref{derived_cat_lemma}.
\end{proof}

We will need the following version of splitting of trace maps in the Witt setup (its inseparable case will be treated in \autoref{invariance_univ_homeo} and only be needed later on):

\begin{lem}\label{split_trace_separable}
	Let $f \colon Z \to S$ be a finite, surjective and separable morphism of normal varieties. Then there exists an integer $r > 0$ and a Cartier equivariant morphism of inverse systems $\{W_n\omega_S^{\bullet}\}_{n \geq 1} \to \{Rf_*W_n\omega_Z^{\bullet}\}_{n \geq 1}$ such that each diagram \[ \begin{tikzcd}
		W_n\omega_S^{\bullet} \arrow[rr] \arrow[rrrr, "\cdot r", bend left =20] &  & Rf_*W_n\omega_Z^{\bullet} \arrow[rr, "\Tr"] &  & W_n\omega_X^{\bullet}
	\end{tikzcd} \] commutes.
\end{lem}
\begin{proof}
	By taking the Galois closure, we may assume that the field extension induced by $f$ is Galois of group $G$. For all $n \geq 1$, we have the usual (Frobenius equivariant) trace map $\Tr \colon Rf_*W_n\cO_S = f_*W_n\cO_Z \to W_n\cO_S$ given by \[ s \in W_n\cO_Z \longmapsto \sum_{g \in G} g(s) \in W_n\cO_S, \] (where $G$ acts component-wise on $W_n\cO_S$). Hence, the diagram
	\[ \begin{tikzcd}
		W_n\cO_{S} \arrow[rr] \arrow[rrrr, "\cdot r", bend left = 20] &  & {Rf_*W_n\cO_Z} \arrow[rr, "\Tr"] &  & W_n\cO_S
	\end{tikzcd}  \] commutes, where $r = \deg(f)$. Applying Grothendieck duality concludes the proof.
\end{proof}

\begin{lem}\label{enough_to_show_in_proj_case}
	Assume that $($Witt-GR$_d)$ (resp. $(\bQ_p$-GR$_d)$) holds for alterations between projective varieties as in \autoref{basic_setup}. Then  $($Witt-GR$_d)$ (resp. $(\bQ_p$-GR$_d)$) holds for any morphism as in \autoref{basic_setup}.
\end{lem}

\begin{proof}
	Both $($Witt-GR$_d)$ and $(\bQ_p$-GR$_d)$ are local on the base, so we may assume that $X$ is quasi-projective. We may also assume that $X$ is integral, since $Y$ is. Hence, using Nagata's compactification theorem \stacksproj{0F41}, there exists a commutative square of integral schemes \[ \begin{tikzcd}
		\overline{Y} \arrow[rr, "\overline{\pi}"] &  & \overline{X}        \\
		Y \arrow[u, "\inc"] \arrow[rr, "\pi"]     &  & X \arrow[u, "\inc"]
	\end{tikzcd} \] where the vertical arrows are open immersions, $\overline{X}$ is projective and $\overline{Y}$ is proper. Note that by properness of $\pi$, we automatically have that $\overline{\pi}^{-1}(X) = Y$. Now, the idea is to proceed as in the end of the proof of \cite[Proposition 4.4.17]{Rulling_Chatzimatiaou_Hodge_Witt_cohomology_and_Witt_rational_singularities}.
	
	By \cite[Remark 4.3.2]{Rulling_Chatzimatiaou_Hodge_Witt_cohomology_and_Witt_rational_singularities} (see also \cite{de_Jong_smoothness_semistability_and_alterations, de_Jong_Families_of_curves_and_alterations}), there exist morphisms \[ \overline{Z} \xto{\phi_1} \overline{S} \xto{\phi_2} \overline{T} \xto{\phi_3} \overline{Y}, \] where
	
	\begin{itemize}
		\item $\phi_1$ is finite and induces a Galois field extension, $\phi_2$ is projective and birational and $\phi_3$ is finite and induces a purely inseparable field extension.
		\item $\overline{Z}$ is smooth projective, and both $\overline{S}$ and $\overline{T}$ are normal and proper.
	\end{itemize}
	
	Set $\phi \coloneqq \phi_1 \circ \phi_2 \circ \phi_3$ and $Z \coloneqq \phi^{-1}(Y)$ (similarly define $S$ and $T$). By \autoref{split_trace_separable}, we have a commutative diagram
	\[  \begin{tikzcd}
		W_n\omega_S^{\bullet} \arrow[rr] \arrow[rrrr, "\cdot r", bend left =20] &  & R\phi_{1, *}W_n\omega_Z^{\bullet} \arrow[rr, "\Tr"] &  & W_n\omega_X^{\bullet}
	\end{tikzcd} \] for some integer $r > 0$. Pushing all this to $Y$ gives a diagram \[ \begin{tikzcd}
	R(\phi_3 \circ \phi_2)_*W_n\omega_S^{\bullet} \arrow[r] \arrow[rr, "\cdot r", bend left=20] & R\phi_*W_n\omega_Z^{\bullet} \arrow[r] & R(\phi_3 \circ \phi_2)_*W_n\omega_S^{\bullet} \arrow[r, "\Tr"] & W_n\omega_Y^{\bullet}
	\end{tikzcd} \] 
	Note that the maps $W_n\cO_Y \to R(\phi_3 \circ \phi_2)_*W_n\cO_S$ are isomorphisms up to uniform $p$--power torsion by \cite[Theorem 2]{Rulling_Chatzimatiaou_Hodge_Witt_cohomology_and_Witt_rational_singularities} and \autoref{Patakfalvi_Zdanowicz_criterion}, so the same holds for the trace maps $R(\phi_2 \circ \phi_3)_*W_n\omega_S^{\bullet} \to W_n\omega_Y^{\bullet}$ by Grothendieck duality. Applying the functor $R\pi_*[-d]$ gives a diagram
	\[\begin{tikzcd}
		C_n^{\bullet} \arrow[r] \arrow[rr, "\cdot r", bend left = 20] & R(\pi \circ \phi)_*W_n\omega_Z \arrow[r] & {C_n^{\bullet}} \arrow[r, "\Tr"] & R\pi_*W_n\omega_Y,
	\end{tikzcd} \] where $C_n^{\bullet} \coloneqq R(\pi \circ \phi_3 \circ \phi_2)_*W_n\omega_S^{\bullet}[-d]$ (recall that $Z$ and $Y$ are in particular Cohen--Macaulay, so their $W_n$--dualizing complex in only supported in degree $-d$). Taking $\Rlim_n$ and $(\cdot)_{\bQ}$ above shows that $R\pi_*W\omega_{Y, \bQ}$ is a direct summand of $R(\pi \circ \phi)_*W\omega_{Z, \bQ}$, so $\bQ_p$-GR vanishing for $\overline{Z} \to \overline{X}$ immediately implies $\bQ_p$-GR vanishing for $Y \to X$ by \autoref{name_Q_p_GR} (one could also mimic the upcoming argument via the duality theory from \cite{Baudin_Duality_between_Witt_Cartier_crystals_and_perverse_sheaves}). 

	Now, assume that for all $i > 0$, the sheaves $R^i(\pi \circ \phi)_*W_n\omega_Z$ are of uniform $p$--power torsion. Since we have a diagram 
	\[ \begin{tikzcd}
		\cH^i(C_n^{\bullet}) \arrow[r] \arrow[rr, "\cdot r", bend left = 20] & R^i(\pi \circ \phi)_*W_n\omega_Z \arrow[r] & \cH^i(C_n^{\bullet})
	\end{tikzcd} \] we deduce that also all $\cH^i(C_n^{\bullet})$ with $i > 0$ are also of uniform $p$--power torsion. Since the maps $\cH^i(C_n^{\bullet}) \to R^i\pi_*W_n\omega_Y$ are isomorphisms up to uniform $p$--power torsion as already explained, the proof is complete.
\end{proof}

\subsection{Witt-GR vanishing for threefolds and for isolated singularities}

We start with the proof of Witt-GR vanishing for threefolds, because we believe that it sheds light on the proof of $\bQ_p$-GR vanishing, and especially why it works in full generality.

\begin{thm}\label{most_general_Witt_GR}
	Let $\pi \colon Y \to X$ be an alteration, with $Y$ smooth and $X$ projective. Assume that for all $i > 0$, the sheaf $R^i\pi_*\omega_Y$ is supported at points. Then there exists $e > 0$ such that for all $i > 0$ and $n \geq 1$, \[ p^e\cdot R^i\pi_*W_n\omega_Y = 0. \]
\end{thm}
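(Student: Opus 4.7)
The strategy is to reduce to Witt--Kawamata--Viehweg (\autoref{Witt_KVV_finite_level}) via an open-affine trick that exploits the hypothesis on supports. First, using the short exact sequence $0 \to W_n\omega_Y \to W_{n+1}\omega_Y \to F^n_*\omega_Y \to 0$ (from \autoref{logarithmic_works_well_when_CM} applied with $D = 0$) and induction on $n$, the support of $R^i\pi_*W_n\omega_Y$ is contained in the finite set $Z := \Supp R^i\pi_*\omega_Y = \{x_1, \ldots, x_r\}$ for all $i \geq 1$ and $n \geq 1$. For $i \geq d := \dim Y$, all fibers of the alteration $\pi$ have dimension at most $d-1$, so $R^i\pi_*\omega_Y = 0$, and the same induction gives $R^i\pi_*W_n\omega_Y = 0$; this handles the range $i \geq d$ automatically.

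Next, since $X$ is projective and $Z$ is finite, I would pick an effective very ample Cartier divisor $A$ on $X$ disjoint from $Z$ and set $D := \pi^*A$ (effective, big, and semi-ample on $Y$), $U := X \setminus A$ (affine), and $V := Y \setminus D = \pi^{-1}(U)$ (smooth). For $1 \leq i \leq d-1$, the Leray spectral sequence for $\pi|_V \colon V \to U$, together with the vanishing of higher cohomology on the affine $U$ and the point-support of $R^i\pi_*W_n\omega_Y$ inside $U$, yields
\[
H^i(V, W_n\omega_V) \;=\; H^0\bigl(U, (R^i\pi_*W_n\omega_Y)|_U\bigr) \;=\; R^i\pi_*W_n\omega_Y.
\]
On the other hand, \autoref{logarithmic_works_well_when_CM} (using that $Y$ is smooth, hence Cohen--Macaulay) gives $R^q(j_V)_*W_n\omega_V = 0$ for $q \geq 1$, so the standard open-immersion pushforward formula combined with filtered-colimit commutation of cohomology on the Noetherian scheme $Y$ yields
\[
R^i\pi_*W_n\omega_Y \;=\; \colim_m H^i\bigl(Y, W_n\omega_{(Y, mD)}\bigr).
\]

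For each fixed $m \geq 1$, Corollary \autoref{Witt_KVV_finite_level} applied to $(Y, mD)$ produces some $e_m > 0$ such that $p^{e_m}$ annihilates $H^i(Y, W_n\omega_{(Y, mD)})$ for every $n \geq 1$ and every $i < d$. The main obstacle is to upgrade this to uniformity in $m$, i.e.\ to show $\sup_m e_m < \infty$; once established, the colimit above is killed by a single $p^e$, which combined with the first step proves the theorem. My plan for this step is to revisit the proof of \autoref{Witt_KVV_finite_level}: the bound $e_m$ is governed by the $p$-torsion exponent of $H^j(Y, WI_{mD})$ for $j \in \{i, i+1\}$, and via the Berthelot--Bloch--Esnault description together with the fact that $V = Y \setminus mD$ is the same smooth open for every $m \geq 1$, these integral modules should admit a uniform identification (or at least maps with uniformly bounded torsion cokernel) with the fixed compactly supported Witt cohomology group $H^j_c(V, W\cO_V)$, whose integral $p$-torsion bound is manifestly independent of $m$.
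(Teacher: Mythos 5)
Your setup (bounding the supports of $R^i\pi_*W_n\omega_Y$ by induction on $n$, choosing $A$ very ample avoiding them, and reducing to \autoref{Witt_KVV_finite_level} for the pullback $D=\pi^*A$) matches the paper's strategy, and the identification $R^i\pi_*W_n\omega_Y \cong H^i(V, W_n\omega_V) \cong \colim_m H^i(Y, W_n\omega_{(Y,mD)})$ is correct. But the step you yourself flag as ``the main obstacle'' --- uniformity of the exponents $e_m$ in $m$ --- is a genuine gap, and the plan you sketch for closing it is unlikely to work. The groups $H^j(Y, WI_{mD})$ for varying $m$ are \emph{not} uniformly comparable to a fixed group: the natural inclusion $WI_{mD}=W(I_D^m)\hookrightarrow WI_D$ has cokernel killed only by $p^{c_m}$ with $p^{c_m}\geq m$ (since $p\cdot(a_0,a_1,\dots)=(0,a_0^p,\dots)$ one needs $I_D^{p^c}\subseteq I_D^m$, i.e.\ $c\gtrsim \log_p m$), so the torsion exponent one extracts from \autoref{finiteness_p_torsion} and this comparison grows without bound in $m$. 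Since for fixed $n$ the surjection onto the coherent module $R^i\pi_*W_n\omega_Y$ is only achieved at a stage $m(n)$ that may grow with $n$, no uniform $e$ comes out of this argument.

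The paper circumvents the colimit entirely by never leaving the projective setting: it works with the single sheaf $W_n\omega_{(Y,D)}$ on $Y$ (not with $j_{V,*}W_n\omega_V$, which only agrees with it up to Cartier-nilpotence --- insufficient for an honest $p^e$-torsion statement) and proves instead that the Leray spectral sequence for $\pi$ and $W_n\omega_{(Y,D)}$ degenerates up to \emph{uniform} $p$-power torsion. Concretely, it shows $H^j(X, R^i\pi_*W_n\omega_{(Y,D)})$ is uniformly $p$-power torsion for all $j>0$: trivially for $i>0$ (punctual support), via the edge-map injection into $H^1(Y,W_n\omega_{(Y,D)})$ for $i=0,j=1$, and for $i=0,j\geq 2$ by an induction on $n$ using the extension $0\to W_{n-1}\omega_{(Y,D)}\to W_n\omega_{(Y,D)}\to F^{n-1}_*\omega_Y(D)\to 0$. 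This last induction is why $A$ must be chosen not merely to avoid the supports but also so that $H^j(X, R^i\pi_*\omega_Y\otimes\cO_X(A))=0$ for all $j>0$ (Serre vanishing) --- a condition absent from your choice of $A$. With the claim in hand, $H^0(X,R^i\pi_*W_n\omega_{(Y,D)})\cong H^i(Y,W_n\omega_{(Y,D)})$ up to uniform torsion, a single application of \autoref{Witt_KVV_finite_level} kills it, and restricting to $X\setminus A$ recovers $R^i\pi_*W_n\omega_Y$. You should either adopt this degeneration argument or find a genuinely new idea for the uniformity in $m$; as written, the proposal does not close.
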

\begin{rem}
	With the appropriate version of existence of resolution of singularities in positive characteristic and \cite[Theorem 2]{Rulling_Chatzistamatiou_GR_for_smooth}, one should be able to remove the projectivity hypothesis on $X$ in the theorem above and in \autoref{Witt_GR_isolated_singularities}.
\end{rem}
\begin{proof}
	Let $A$ be a very ample effective divisor on $X$ disjoint from $\Supp(R^i\pi_*\omega_Y)$ for all $i > 0$, such that for all $j > 0$ and $i \geq 0$, $H^j(X, R^i\pi_*\omega_Y \otimes \cO_X(A)) = 0$. Set $D \coloneqq \pi^*A$. 
	
	Since each $R^i\pi_*\omega_Y(D) \cong R^i\pi_*\omega_Y \otimes \cO_X(A)$ with $i > 0$ is supported at finitely many points by assumption, we deduce that it is also the case of each sheaf $R^i\pi_*W_n\omega_{(Y, D)}$ with $i > 0$ and $n \geq 1$ by \autoref{logarithmic_works_well_when_CM}. 
	
	We claim that for all $i \geq 0$ and $j > 0$, the groups \[ H^j(X, R^i\pi_*W_n\omega_{(Y, D)}) \] are of uniform $p$--power torsion.
	
	If $i > 0$, these groups actually vanish, since a sheaf supported in dimension zero has no higher cohomology groups. If $i = 0$ and $j = 1$, then \[ H^1(X, \pi_*W_n\omega_{(Y, D)}) \inc H^1(Y, W_n\omega_{(Y, D)}), \] which is of uniform $p$--power torsion by \autoref{Witt_KVV_finite_level}. Finally, we will show by induction on $n \geq 1$ that for all $j \geq 2$, $H^j(X, \pi_*W_n\omega_{(Y, D)}) = 0$. The case $n = 1$ follows from the choice of $A$, since $\pi_*W_1\omega_{(Y, D)} = \pi_*\omega_Y(D) \cong \pi_*\omega_Y \otimes \cO_X(A)$. For $n \geq 2$, consider the exact sequence \[ 0 \to W_{n - 1}\omega_{(Y, D)} \to W_n\omega_{(Y, D)} \to F^{n - 1}_*\omega_Y(D) \to 0, \] (see \autoref{logarithmic_works_well_when_CM}), and define the sheaves 
	\begin{align*}
		&\cI_n \coloneqq \im(\pi_*W_n\omega_{(Y, D)} \to \pi_*F^{n - 1}_*\omega_Y(D)) \\
		&\cC_n \coloneqq \ker(R^1\pi_*W_{n - 1}\omega_{(Y, D)} \to R^1\pi_*W_n\omega_{(Y, D)}).
	\end{align*} Note that by construction, we have an exact sequence \[ 0 \to \cI_n \to \pi_*F^{n - 1}_*\omega_Y(D) \to \cC_n \to 0. \] Then by induction, we have that for $i \geq 2$, \[ H^i(X, \pi_*W_n\omega_{(Y, D)}) \cong H^i(X, \cI_n) \cong H^{i - 1}(X, \cC_n) \expl{=}{$\cC_n$ is supported in dimension zero} 0. \]
	Thus, our claim is proven. It automatically implies that for all $i > 0$, \[ H^0(X, R^i\pi_*W_n\omega_{(Y, D)}) \cong H^i(Y, W_n\omega_{(Y, D)})\] up to uniform $p$--power torsion, so we conclude by \autoref{Witt_KVV_finite_level} that for some $e > 0$, \[ p^e \cdot H^0(X, R^i\pi_*W_n\omega_{(Y, D)}) = 0. \] Since the sheaves $R^i\pi_*W_n\omega_{(Y, D)}$ are supported in dimension zero, this shows that $p^e \cdot R^i\pi_*W_n\omega_{(Y, D)} = 0$, so in particular $p^e \cdot R^i\pi_*W_n\omega_Y|_{X \setminus A} = 0$. Since $A$ avoids the support of each $R^i\pi_*W_n\omega_Y$, the proof is complete.
\end{proof}

To unconditionally deduce our vanishing theorem for threefolds, we will need the following vanishing result, which is a consequence of GR--vanishing on surfaces in arbitrary characteristic:

\begin{lem}\label{GR_surfaces}
	Let $\pi \colon Y \to X$ be an alteration with $Y$ smooth. Then for all $i > 0$, \[ \codim \Supp(R^i\pi_*\omega_Y) \geq i + 2. \]
\end{lem}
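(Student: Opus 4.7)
I will induct on $d = \dim Y = \dim X$, reducing to Grauert--Riemenschneider vanishing for smooth alterations on surfaces (which holds in arbitrary characteristic, e.g.\ by \cite{Rulling_Chatzistamatiou_GR_for_smooth}) via a general hyperplane section argument. Since the conclusion is local on $X$, I may assume $X$ is affine and quasi-projective; as $Y$ is a connected smooth variety it is irreducible, whence so is $X$. In the base case $d \leq 2$, the condition $\codim_X \Supp R^i\pi_*\omega_Y \geq i+2 \geq 3 > d$ reduces to $R^i\pi_*\omega_Y = 0$, which is Grauert--Riemenschneider vanishing in dimension $\leq 2$ (the curve case being trivial).

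For the inductive step with $d \geq 3$, fix $i \geq 1$ and suppose by contradiction that some irreducible component $W$ of $\Supp R^i\pi_*\omega_Y$ has $\codim_X W \leq i+1$. Choose a very ample line bundle $L$ on $X$; by Bertini applied to the basepoint-free system $|\pi^*L^m|$ on the smooth $Y$ (for $m \gg 0$), a general section $s \in H^0(X, L^m)$ gives a smooth $Y_H := V(\pi^*s) \subseteq Y$, with $W \not\subseteq H := V(s)$, and such that $\pi_H := \pi|_{Y_H} : Y_H \to H$ is a smooth alteration between irreducible varieties (shrinking $X$ further if needed). The standard adjunction sequence $0 \to \omega_Y \otimes \pi^*L^{-m} \to \omega_Y \to \omega_Y|_{Y_H} \to 0$, combined with the projection formula and the relative adjunction $\omega_Y|_{Y_H} \cong \omega_{Y_H} \otimes \pi_H^*L^{-m}|_H$, yields after applying $R\pi_*$ the exact sequence
\[ R^i\pi_*\omega_Y \otimes L^{-m} \xrightarrow{\,\cdot\, s\,} R^i\pi_*\omega_Y \to i_{H,*}\!\left( R^i\pi_{H,*}\omega_{Y_H} \otimes L^{-m}|_H \right). \]
The inductive hypothesis, applied to $\pi_H$ with $\dim H = d-1$, gives $\codim_H \Supp R^i\pi_{H,*}\omega_{Y_H} \geq i+2$.

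Since $W \not\subseteq H$, every irreducible component $W'$ of $W \cap H$ satisfies $\codim_X W' = \codim_X W + 1 \leq i+2$. At the generic point $\eta'$ of such a $W'$, coherence of $R^i\pi_*\omega_Y$ together with the inclusion $\eta' \in W \subseteq \Supp R^i\pi_*\omega_Y$ give a nonzero stalk; since $s \in \mathfrak{m}_{\eta'}$, Nakayama's lemma yields
\[ (R^i\pi_*\omega_Y)_{\eta'} / s\,(R^i\pi_*\omega_Y)_{\eta'} \neq 0, \]
which by the exact sequence injects into $(R^i\pi_{H,*}\omega_{Y_H})_{\eta'} \otimes L^{-m}_{\eta'}|_H$. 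Hence $\eta' \in \Supp R^i\pi_{H,*}\omega_{Y_H}$, forcing $\codim_X W' \geq i+3$, contradicting $\codim_X W' \leq i+2$ and completing the induction.

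\textbf{Main obstacle.} The principal technical subtlety I expect lies in the Bertini setup in positive characteristic: one must simultaneously guarantee smoothness of $Y_H$ and keep $\pi_H$ a smooth alteration between irreducible varieties so that the inductive hypothesis genuinely applies. Once this is secured, the mechanism is very clean---each hyperplane section advances both the dimension of the base and the required codimension by one---so the two sides of the inequality march in step from the surface base case.
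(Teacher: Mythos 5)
Your argument is a genuinely different route from the paper, which simply cites \cite[Proposition 2.6.1]{Hacon_Pat_GV_Characterization_Ordinary_AV} for this lemma; unfortunately, as written it has two genuine gaps.

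The first is the Bertini step, which you flag but do not resolve, and which really does fail in positive characteristic. The paper's alterations are allowed to be generically inseparable: if, say, $\pi$ factors through a $k$--linear Frobenius of $X$, then every divisor of the form $V(\pi^*s)$ with $s \in H^0(X, L^m)$ is non--reduced along a divisor, so no member of the sub--linear system $\pi^*|L^m|$ is smooth. You can reduce to the birational case via the Stein factorization $Y \to Z \to X$ (since $g \colon Z \to X$ is finite, $R^i\pi_* = g_*R^if_*$ and finite surjective maps preserve codimension), but even then no standard Bertini theorem in characteristic $p$ guarantees that $\pi^{-1}(H)$ is smooth along the positive--dimensional fibers of $\pi$: smoothness of $V(\pi^*s)$ at $y$ requires the linear form cutting out $H$ not to annihilate $\im(d\pi_y)$, and over the exceptional locus the rank of $d\pi_y$ can drop too much (this is exactly the kind of wild behaviour that makes generic smoothness fail in characteristic $p$). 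Some substitute input is needed here; this is not a routine technicality.

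The second gap is that the induction derives no contradiction when the offending component $W$ of $\Supp R^i\pi_*\omega_Y$ is zero--dimensional: a general $H$ then satisfies $W \cap H = \emptyset$, so there is no $W'$ to work with. A zero--dimensional component violates the bound exactly when $i = d - 1$ (for $i \le d-2$ one has $\codim = d \ge i+2$ automatically, and $R^i\pi_* = 0$ for $i \ge d$ by the fiber--dimension bound), so your argument does not prove $R^{d-1}\pi_*\omega_Y = 0$. Note that the vanishing $R^2\pi_*\omega_Y = 0$ for threefold alterations is precisely one of the instances of this lemma used later in the paper, so this case cannot be discarded. Your base case handles $R^{d-1}$ for $d \le 2$ by quoting surface GR, and some analogous separate argument (e.g.\ via formal functions and duality at the finitely many bad points) is needed for the top index in every dimension.
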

\begin{proof}
	This is exactly \cite[Proposition 2.6.1]{Hacon_Pat_GV_Characterization_Ordinary_AV} (see the arXiv version).
\end{proof}

\begin{cor}\label{Witt_GR_threefolds}
	Let $\pi \colon Y \to X$ be an alteration, with $Y$ smooth of dimension at most $3$. Then there exists $e > 0$ such that for all $i > 0$ and $n \geq 1$, \[ p^e\cdot R^i\pi_*W_n\omega_Y = 0. \]
\end{cor}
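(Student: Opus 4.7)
The plan is to combine the three preceding results in a direct way, with essentially no additional work needed. First, by \autoref{enough_to_show_in_proj_case}, it suffices to establish $(\mathrm{Witt}\text{-}\mathrm{GR}_d)$ for $d \le 3$ in the setting where both $Y$ and $X$ are projective (and $Y$ smooth of dimension at most $3$). So we may assume from now on that $\pi \colon Y \to X$ is an alteration between projective varieties, with $Y$ smooth and $\dim Y \le 3$.

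Next, we check that the hypothesis of \autoref{most_general_Witt_GR} is satisfied, i.e.\ that $R^i\pi_*\omega_Y$ is supported at points for every $i > 0$. Since $\pi$ is an alteration, it is surjective and generically finite, so $\dim X = \dim Y \le 3$. By \autoref{GR_surfaces}, for each $i > 0$ we have
\[ \codim_X \Supp(R^i\pi_*\omega_Y) \ge i + 2 \ge 3 \ge \dim X, \]
so $\Supp(R^i\pi_*\omega_Y)$ has dimension at most $\dim X - 3 \le 0$; in other words, it is a finite set of closed points (possibly empty).

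Finally, we apply \autoref{most_general_Witt_GR} to $\pi \colon Y \to X$ to obtain $e > 0$ such that $p^e \cdot R^i\pi_*W_n\omega_Y = 0$ for all $i > 0$ and all $n \ge 1$, which is exactly the desired conclusion.

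There is no real obstacle here: the content has been placed in \autoref{most_general_Witt_GR} (which deduces Witt--GR from Witt--KVV via a careful induction using the ample divisor $A$ and the logarithmic exact sequences), and in the classical \autoref{GR_surfaces} (a codimension estimate valid in any characteristic). The role of the hypothesis $\dim Y \le 3$ is precisely to make the codimension bound $i+2 \ge 3$ force the supports of the higher direct images to be zero--dimensional, which is what \autoref{most_general_Witt_GR} requires. The only thing to verify with any care is the reduction to the projective case, which is handled by \autoref{enough_to_show_in_proj_case}.
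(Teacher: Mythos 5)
Your proof is correct and follows exactly the paper's argument: reduce to the projective case via \autoref{enough_to_show_in_proj_case}, use \autoref{GR_surfaces} to see that the supports of the $R^i\pi_*\omega_Y$ ($i>0$) are zero--dimensional when $\dim Y \le 3$, and conclude by \autoref{most_general_Witt_GR}. The only difference is that you spell out the codimension computation, which the paper leaves implicit.
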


\begin{proof}
	We may assume that $Y$ and $X$ are projective by \autoref{enough_to_show_in_proj_case}. By \autoref{GR_surfaces}, the hypothesis of \autoref{most_general_Witt_GR} is satisfied, so we are done.
\end{proof}

\begin{cor}\label{Witt_GR_isolated_singularities}
	Let $\pi \colon Y \to X$ be a projective birational morphism of varieties with $Y$ smooth. Assume furthermore that $X$ is projective and has isolated singularities. Then there exists $e > 0$ such that for all $i > 0$ and $n \geq 1$, \[ p^e\cdot R^i\pi_*W_n\omega_Y = 0. \]
\end{cor}
\begin{proof}
	This is an immediate consequence of \cite[Theorem 2]{Rulling_Chatsistamatiaou_Higher_direct_image_in_positive_characteristic} and \autoref{most_general_Witt_GR}.
\end{proof}

%
%

\subsection{Proof of $\bQ_p$--GR vanishing}

Throughout, fix an alteration $\pi \colon Y \to X$ with $Y$ smooth (i.e. we are in \autoref{basic_setup}). By \autoref{name_Q_p_GR}, our goal is to show that for $i > 0$, the sheaves \[ R^i\pi_*W_n\omega_Y^{\perf} = 0 \] are of uniform $p$--power torsion. By \cite[Lemma 4.4.2]{Baudin_Duality_between_Witt_Cartier_crystals_and_perverse_sheaves}, we then need to investigate the Witt--Cartier \emph{crystals} $R^i\pi_*W_n\omega_Y$. 

\begin{lem}\label{everyone_has_the_same_simple_subquot}
	For any $i \geq 0$ and $n \geq 1$, the set of simple subquotients of the $W_n$--Cartier crystal $R^i\pi_*W_n\omega_Y$ (see \autoref{crystals_have_finite_length}) is contained in that of $R^i\pi_*\omega_Y$.
\end{lem}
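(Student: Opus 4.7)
The approach is to induct on $n$; the case $n = 1$ is tautological since $W_1\omega_Y = \omega_Y$ and the category of $W_1$-Cartier crystals coincides with that of ordinary Cartier crystals.

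For $n \geq 2$, the plan is to apply $R\pi_*$ to the short exact sequence of $W_n$-Cartier modules
\[ 0 \to W_{n-1}\omega_Y \xrightarrow{\underline{p}} W_n\omega_Y \to F^{n-1}_*\omega_Y \to 0 \]
furnished by \autoref{logarithmic_works_well_when_CM} (applied with $D = 0$ on the smooth, hence Cohen--Macaulay, variety $Y$). Since $F$ commutes with $\pi$, one has $R^i\pi_* \circ F^{n-1}_* = F^{n-1}_* \circ R^i\pi_*$, so the resulting long exact sequence yields, for each $i \geq 0$, a short exact sequence in $\Coh_{W_n X}^C$,
\[ 0 \to \cA_i \to R^i\pi_*W_n\omega_Y \to \cB_i \to 0, \]
in which $\cA_i$ is a quotient of $R^i\pi_*W_{n-1}\omega_Y$ (viewed as a $W_n$-module through $R \colon W_n\cO_X \twoheadrightarrow W_{n-1}\cO_X$) and $\cB_i$ is a subobject of $F^{n-1}_* R^i\pi_*\omega_Y$.

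Since $W_n$-Cartier crystals have finite length (\autoref{crystals_have_finite_length}), Jordan--H\"older then reduces the problem to understanding the simple subquotients of $\cA_i$ and $\cB_i$. A useful preliminary observation here is that every simple $W_n$-Cartier crystal $\cS$ is $p$-torsion: if $p\cS = \cS$ then iterating gives $\cS = p^n\cS = 0$ (since $p^n$ kills $W_n\cO_X$), a contradiction, so by simplicity $p\cS = 0$ and $\cS$ is already an ordinary Cartier crystal. Consequently, the simple subquotients of $\cA_i$ are among the simple subquotients of $R^i\pi_*W_{n-1}\omega_Y$, which by the induction hypothesis lie among those of $R^i\pi_*\omega_Y$. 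To treat $\cB_i \hookrightarrow F^{n-1}_* R^i\pi_*\omega_Y$, I would invoke the iterated Cartier morphism
\[ C^{n-1} \colon F^{n-1}_* R^i\pi_*\omega_Y \to R^i\pi_*\omega_Y, \]
which is a morphism of Cartier modules whose perfection is an isomorphism (the Cartier operator becomes invertible after perfection), so by \autoref{Gabber_finiteness} it is a nil-isomorphism, identifying $F^{n-1}_* R^i\pi_*\omega_Y \cong R^i\pi_*\omega_Y$ as Cartier crystals and hence matching their sets of simple subquotients.

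The main obstacle will not be conceptual but rather bureaucratic: one must ensure that the maps produced by applying $R\pi_*$ are $W_n$-Cartier-linear with respect to the intended structures (for the quotient piece, the Cartier structure on $F^{n-1}_*\omega_Y$ is the one induced by $F^{n-1}_*$ applied to the usual Cartier operator), and that the identifications between $W_n$-, $W_{n-1}$-, and $W_1$-Cartier crystals are consistent. Both points are automatic: \autoref{logarithmic_works_well_when_CM} delivers a sequence of Cartier modules, functoriality of $R\pi_*$ and $F^{n-1}_*$ does the rest, and the $p$-torsion observation above shows that simple subquotients can be unambiguously read off in any of these Cartier crystal categories.
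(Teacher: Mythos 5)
Your proof is correct and follows essentially the same route as the paper: apply $R\pi_*$ to the short exact sequence $0 \to W_{n-1}\omega_Y \to W_n\omega_Y \to F^{n-1}_*\omega_Y \to 0$, use that the Cartier structural map gives $F^{n-1}_*R^i\pi_*\omega_Y \sim_C R^i\pi_*\omega_Y$, and induct on $n$ via Jordan--H\"older and the finite-length property of crystals. The paper's own proof is a three-line version of exactly this argument, and your added observation that simple $W_n$--Cartier crystals are $p$--torsion is a reasonable way of making explicit the comparison of crystal categories that the paper leaves implicit.
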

\begin{proof}
	We have a long exact sequence \[ \dots \to R^i\pi_*W_n\omega_Y \to R^i\pi_*W_{n + 1}\omega_Y \to F^n_*R^i\pi_*\omega_Y \to \dots. \] Since $F^n_*R^i\pi_*\omega_Y \sim_C R^i\pi_*\omega_Y$ (the Cartier module action is always a nil--isomorphism), it follows immediately that a simple subquotient of $R^i\pi_*W_{n + 1}\omega_Y$ is also a simple subquotient of either $R^i\pi_*W_n\omega_Y$ or $R^i\pi_*\omega_Y$. We then conclude the proof by induction on $n \geq 1$.
\end{proof}

\begin{lem}\label{the_golden_neighborhood}
	Let $x \in X$. Then there exists an affine open neighborhood $U$ of $x$ with the following property: for all $i \geq 0$, $n \geq 1$ and $e > 0$, then \[ p^e \cdot H^0(U, R^i\pi_*W_n\omega_Y) \sim_C 0 \: \implies \: p^e \cdot R^i\pi_*W_n\omega_Y|_U \sim_C 0. \]
\end{lem}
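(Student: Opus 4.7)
The plan is to take $U$ to be any affine open neighborhood of $x$ in $X$ (such a $U$ exists since $X$ is a variety), and to verify that no further ``golden'' property beyond affineness is actually needed: the stated implication is in fact an equivalence for any such $U$. First, I would observe that since $\pi \colon Y \to X$ is proper (being an alteration) and $W_n\omega_Y$ is coherent on $W_nY$, each sheaf $M_{i,n} \coloneqq R^i\pi_*W_n\omega_Y$ is a coherent $W_n$--Cartier module on $W_nX$ by \autoref{pushfoward}, and so is every submodule $p^e M_{i,n}$.

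The core of the argument is the standard affine equivalence
\[
	\QCoh(W_nU) \;\simeq\; \Mod\bigl(W_n\Gamma(U,\cO_X)\bigr), \qquad \cN \longmapsto \Gamma(U,\cN),
\]
which is exact, restricts to an equivalence between coherent sheaves and finitely generated modules, and---since the absolute Frobenius $F$ is affine---is compatible with the functor $F_*$. Under this equivalence, the Cartier structure $\theta$ on $M_{i,n}|_U$ corresponds to the induced Cartier structure on the module $H^0(U, M_{i,n}) = \Gamma(U, M_{i,n})$, and the coherent submodule $p^e M_{i,n}|_U$ corresponds to the finitely generated submodule $p^e H^0(U, M_{i,n})$.

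Because this equivalence is exact, the Cartier operator on the sheaf $p^e M_{i,n}|_U$ is nilpotent with some index $N \geq 1$ if and only if the Cartier operator on the module $p^e H^0(U, M_{i,n})$ is nilpotent with the same index $N$. Since both objects are coherent, nilpotence coincides with the crystal--zero condition $\sim_C 0$, by the parenthetical remark in the definition of $\Crys_{W_nX}^C$ that for coherent Cartier modules locally nilpotent and nilpotent agree. Thus the implication in the statement is in fact an equivalence, and it holds for any affine open $U$ containing $x$.

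\textbf{Main obstacle.} The argument is essentially formal; the lemma simply packages the reduction of a sheaf--theoretic vanishing on $U$ to a module--theoretic vanishing on global sections. The only subtle point is the compatibility of the affine equivalence with $F_*$, which is immediate from $F$ being an affine morphism. If it turns out that a more refined choice of $U$ is needed for the downstream proof of $\bQ_p$--GR vanishing, one could additionally shrink $U$ so as to avoid the supports of those simple subquotients---finite in number by \autoref{everyone_has_the_same_simple_subquot} and \autoref{crystals_have_finite_length}---of the coherent crystals $R^i\pi_*\omega_Y$ whose supports do not contain $x$; but this refinement is not required for the lemma as stated.
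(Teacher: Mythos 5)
Your proposal does not work: the implication in the lemma is \emph{not} an equivalence valid for every affine open $U$, and the refinement you dismiss in your final paragraph is in fact the entire content of the lemma. The gap lies in your identification of the hypothesis $p^e\cdot H^0(U, R^i\pi_*W_n\omega_Y)\sim_C 0$ with nilpotence of the sheaf $p^e\cdot R^i\pi_*W_n\omega_Y|_U$ via the affine equivalence $\QCoh(W_nU)\simeq \Mod(W_n\Gamma(U,\cO_X))$. In the statement, $H^0(U,-)$ is the pushforward to $\Spec k$, so $H^0(U,\cM)$ is a $W_n$--Cartier module over the \emph{point}: it is finitely generated over $\Gamma(U,W_n\cO_X)$ but in general not over $W_n(k)$, hence not coherent there, and for such an object $\sim_C 0$ means \emph{locally} nilpotent, which is strictly weaker than nilpotence of its structural map. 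This is precisely the phenomenon flagged in the remark following \autoref{Gabber_finiteness}: $H^0(\bA^1,\omega_{\bA^1})$ is locally nilpotent (hence $\sim_C 0$ as a Cartier module over $k$), while the coherent Cartier module $\omega_{\bA^1}$ has surjective structural map and is certainly not $\sim_C 0$. So for a general affine $U$ the forward implication you claim is simply false, and no appeal to the exactness or $F_*$--compatibility of the affine equivalence can repair it, because the two sides of the implication live over different base schemes.

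The actual argument must choose $U$ carefully. The paper collects the set $S$ of simple subquotients of all the crystals $R^i\pi_*W_n\omega_Y$, which is finite by \autoref{everyone_has_the_same_simple_subquot} and \autoref{crystals_have_finite_length}, and uses \cite[Lemma 4.4.10]{Baudin_Duality_between_Witt_Cartier_crystals_and_perverse_sheaves} (together with torsion--freeness of each simple crystal on its support) to shrink $U$ so that for every $\cM\in S$ one has $H^0(U,\cM)\not\sim_C 0$ whenever $\cM|_U\not\sim_C 0$. Then, given the hypothesis, exactness of $H^0(U,-)$ on the affine $U$ forces all simple subquotients of $\cQ=R^i\pi_*W_n\omega_Y/\bigl(R^i\pi_*W_n\omega_Y[p^e]\bigr)$ to have locally nilpotent global sections, hence (all of them lying in $S$) to vanish as crystals on $U$, giving $\cQ|_U\sim_C 0$ and thus the conclusion. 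Your closing remark gestures at shrinking $U$ to control simple subquotients, but treats this as optional for the lemma as stated; it is not, and without it the lemma fails.
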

\begin{rem}\label{the_golden_neighborhood_is_even_more_golden}
	The proof shows that $U$ can be taken to be an arbitrarily small open neighborhood of $x$.
\end{rem}
\begin{proof}
	Let $S$ denote the collection of simple subquotients of all the crystals $R^i\pi_*W_n\omega_Y$ (up to isomorphism), with $i \geq 0$ and $n \geq 1$. Then $S$ is finite by \autoref{everyone_has_the_same_simple_subquot} and \autoref{crystals_have_finite_length}. Fix $\cM \in S$. By \cite[Lemma 4.4.10]{Baudin_Duality_between_Witt_Cartier_crystals_and_perverse_sheaves}, there exists an open neighborhood $U_{\cM}$ of $x$ such that if $\cM_x \not\sim_C 0$, then $H^0(U_{\cM}, \cM) \not\sim_C 0$. Since $\cM$ is torsion--free on its support, we may replace $U_{\cM}$ by any smaller affine neighbourhood of $x$, and hence further assume that $\cM_x \sim_C 0$ if and only if $\cM|_{U_{\cM}} \sim_C 0$. Since $S$ is finite, we may assume that $U \coloneqq U_{\cM} = U_{\cM'}$ for all $\cM, \cM' \in S$.
	
	Let us show that $U \ni x$ satisfies the conclusion of the lemma, so assume that for some $i \geq 0$, $n \geq 1$ and $e > 0$, we have that $p^e \cdot H^0(U, R^i\pi_*W_n\omega_Y) \sim_C 0$. Consider the exact sequence \[ 0 \to R^i\pi_*W_n\omega_Y[p^e] \to R^i\pi_*W_n\omega_Y \to \cQ \to 0. \] Since $U$ is affine, the functor $H^0(U, -)$ is exact, so we obtain from the assumption that $H^0(U, \cQ) \sim_C 0$. Again by affinity of $U$, we obtain that this is true for all simple subquotients of $\cQ$. Since all these simple subquotients belong to $S$, we deduce that $\cQ|_U \sim_C 0$ by definition of $U$. This gives us that $R^i\pi_*W_n\omega_Y[p^e]|_U \sim_C R^i\pi_*W_n\omega_Y|_U$, or in other words that $p^e \cdot R^i\pi_*W_n\omega_Y|_U \sim_C 0$.
\end{proof}

We can finally prove $\bQ_p$--GR vanishing.
\begin{thm}\label{Qp_GR}
	Let $\pi \colon Y \to X$ be an alteration with $Y$ smooth. Then for all $i > 0$, we have the vanishing \[ R^i\pi_*W\omega_{Y, \bQ} = 0. \]
\end{thm}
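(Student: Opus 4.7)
By \autoref{name_Q_p_GR} it is equivalent to prove that the sheaves $R^i\pi_*W_n\omega_Y^{\perf}$ are of uniform $p$--power torsion, and by \autoref{enough_to_show_in_proj_case} we may assume that $X$ and $Y$ are projective. Since an alteration satisfies $\dim X = \dim Y =: d$ and has maximum fiber dimension at most $d-1$, $R^i\pi_*$ vanishes identically for $i \geq d$, so only the range $0 < i < d$ requires work. The strategy is to verify crystal--level uniform $p$--power torsion locally around each $x \in X$, using the already--established Witt--KVV on a suitable pair $(Y, D)$.

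Fix $x \in X$. By \autoref{the_golden_neighborhood} together with \autoref{the_golden_neighborhood_is_even_more_golden}, there is an arbitrarily small affine open neighborhood $U_x \ni x$ such that, for any $e > 0$, if $p^e \cdot H^0(U_x, R^i\pi_*W_n\omega_Y) \sim_C 0$ then $p^e \cdot R^i\pi_*W_n\omega_Y|_{U_x} \sim_C 0$. I would take $U_x = X \setminus \Supp(A_x')$ for a suitably chosen effective very ample divisor $A_x'$ not passing through $x$; such an $A_x'$ exists by global generation of $I_{X \setminus U_x} \otimes L^{\otimes m}$ for some very ample line bundle $L$ on $X$ and $m \gg 0$. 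Setting $D_x \coloneqq \pi^*A_x'$, this divisor is effective, semi--ample (as the pullback of a very ample divisor under a proper morphism), big (since $(\pi^*A_x')^d = \deg(\pi) \cdot (A_x')^d > 0$), and its complement $V_x \coloneqq Y \setminus D_x = \pi^{-1}(U_x)$ is smooth as an open of $Y$.

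Now \autoref{Witt_KVV_finite_level} applied to $(Y, D_x)$ produces a constant $e_x > 0$ such that $p^{e_x} \cdot H^i(Y, W_n\omega_{(Y, D_x)}) = 0$ for all $0 < i < d$ and $n \geq 1$. By \autoref{pushforward_from_open} there is a crystal isomorphism $W_n\omega_{(Y, D_x)} = \HHom_{W_n\cO_Y}(W_nI_{D_x}, W_n\omega_Y) \sim_C j_{V_x,*}W_n\omega_{V_x}$, and since the cohomology of a locally nilpotent Cartier module is itself locally nilpotent, the induced map on $H^i(Y, -)$ has locally nilpotent kernel and cokernel. Because $j_{V_x}$ is an affine open immersion one has $H^i(Y, j_{V_x,*}W_n\omega_{V_x}) = H^i(V_x, W_n\omega_{V_x})$, and applying flat base change along $U_x \subseteq X$ together with vanishing of higher cohomology of quasi-coherent sheaves on the affine $U_x$ gives $H^i(V_x, W_n\omega_{V_x}) = H^0(U_x, R^i\pi_*W_n\omega_Y)$. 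Combining everything, $p^{e_x} \cdot H^0(U_x, R^i\pi_*W_n\omega_Y) \sim_C 0$ uniformly in $n$, so \autoref{the_golden_neighborhood} yields $p^{e_x} \cdot R^i\pi_*W_n\omega_Y|_{U_x} \sim_C 0$.

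By quasi-compactness of $X$, finitely many $U_x$'s cover; setting $e$ to be the maximum of the corresponding $e_x$'s, one obtains globally that $p^e \cdot R^i\pi_*W_n\omega_Y$ is locally nilpotent, which after perfection (exact on coherent modules and killing locally nilpotent objects by \autoref{Gabber_finiteness}) gives $p^e \cdot R^i\pi_*W_n\omega_Y^{\perf} = 0$, proving the theorem through \autoref{name_Q_p_GR}. The main obstacle, I expect, is the comparison step between $W_n\omega_{(Y, D_x)}$ (to which Witt--KVV directly applies with a constant uniform in $n$) and $j_{V_x,*}W_n\omega_{V_x}$ (which carries the cohomology we actually need): the former is coherent while the latter is only a filtered colimit $\varinjlim_N W_n\omega_{(Y, N D_x)}$, so a naive comparison loses uniformity in $n$. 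The whole argument rests on the fact that this defect is locally nilpotent, and hence harmless once one has passed to the crystal category via \autoref{pushforward_from_open}.
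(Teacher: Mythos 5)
Your proposal is correct and follows essentially the same route as the paper's proof: reduce to the projective case, shrink to an affine neighborhood $U$ whose complement is an ample divisor using \autoref{the_golden_neighborhood} and \autoref{the_golden_neighborhood_is_even_more_golden}, apply \autoref{Witt_KVV_finite_level} to the pullback divisor, transfer the vanishing through \autoref{pushforward_from_open} and the degenerate Leray spectral sequence over the affine $U$, and conclude by perfection and quasi-compactness. The subtlety you flag at the end (that the comparison with $j_{V,*}W_n\omega_V$ only holds up to locally nilpotent defect) is exactly the point the paper handles the same way, so nothing is missing.
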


\begin{proof}
	By \autoref{enough_to_show_in_proj_case}, we may assume that $Y$ and $X$ are projective. Fix $x \in X$, and let $U$ be an affine open neighborhood of $x$ satisfying the conditions of \autoref{the_golden_neighborhood}. By \autoref{the_golden_neighborhood_is_even_more_golden}, we may assume that its complement $A = X \setminus U$ is an ample Cartier divisor. Set $D \coloneqq \pi^*A \subseteq Y$ (which is then big and semi--ample), and $V \coloneqq Y \setminus D = \pi^{-1}(U)$. We will also write $\pi$ for the induced proper map $V \to U$. By \autoref{pushforward_from_open}, we know that \[ W_n\omega_{(Y, D)} \sim_C Rj_{V, *}W_n\omega_{V} \] (since $D$ is Cartier, $j_V$ is an affine map, so higher pushforwards vanish). Thus, \[ R\pi_*W_n\omega_{(Y, D)} \sim_C Rj_{U, *}R\pi_*W_n\omega_{V} \] and since $j_U$ is also an affine morphism, we deduce that for all $i \geq 0$, \[  
	R^i\pi_*W_n\omega_{(Y, D)} \sim_C j_{U, *}R^i\pi_*W_n\omega_{V} = Rj_{U, *}R^i\pi_*W_n\omega_{V}. \]
	This shows that for all $j > 0$, \[ H^j(X, R^i\pi_*W_n\omega_{(Y, D)}) \sim_C H^j(U, R^i\pi_*W_n\omega_Y) \expl{=}{$U$ is affine} 0. \]
	Hence, the Leray spectral sequence shows that for all $i > 0$, \[ H^i(Y, W_n\omega_{(Y, D)}) \sim_C H^0(X, R^i\pi_*W_n\omega_{(Y, D)}) \expl{\sim_C}{\autoref{pushforward_from_open}} H^0(U, R^i\pi_*W_n\omega_{V}). \]
	By \autoref{Witt_KVV_finite_level}, there exists $e > 0$ such that $p^e \cdot H^i(Y, W_n\omega_{(Y, D)}) = 0$ for all $i > 0$ and $n \geq 1$, so by the choice of $U = X \setminus A$ (see \autoref{the_golden_neighborhood}), we deduce that \[ p^e \cdot (R^i\pi_*W_n\omega_Y)|_U = p^e \cdot R^i\pi_*W_n\omega_{V} \sim_C 0. \] Since the sheaf on the left--hand side is coherent on $U$ and locally nilpotent, its perfection is zero. Thus, we deduce that \[ \left(p^e \cdot R^i\pi_*W_n\omega_Y^{\perf}\right)\big|_U \expl{=}{\cite[Lemma 4.4.2.(b)]{Baudin_Duality_between_Witt_Cartier_crystals_and_perverse_sheaves}} \left(p^e \cdot (R^i\pi_*W_n\omega_Y)|_U\right)^{\perf} = 0. \] We then conclude by quasi--compactness of $X$ that the sheaves $R^i\pi_*W_n\omega_Y^{\perf}$ are of uniform $p$--power torsion, so the proof is complete by \autoref{name_Q_p_GR}.
\end{proof}

Recall that $W_n\omega_{Y, \log}$ denotes the étale $\bZ/p^n$--sheaf generated by local sections of the form $d\log([y_1]) \wedge \dots \wedge d\log([y_d])$, with $y_i \in \cO_Y^{\times}$. Recall also the definition of the weight $j$ motivic complex $\bZ/p^n(j)_Y$ (see e.g. \cite{Geisser_Levine_The_K_theory_of_fields_in_char_p}). As a consequence of \autoref{Qp_GR}, we obtain a vanishing result for Witt--logarithmic sheaves and top weight motivic complex (this statement was kindly suggested to us by Kay Rülling):

\begin{cor}\label{motivic_vanishing}
	Let $\pi \colon Y \to X$ be an alteration with $Y$ smooth of dimension $d$. Then for all $i > 0$, the sheaves $R^i\pi_*W_n\omega_{Y, \log}$ are of uniform $p$--power torsion and for all $i > d$, the sheaves $R^i\pi_*(\bZ/p^n(d)_Y)$ are also of uniform $p$--power torsion.
\end{cor}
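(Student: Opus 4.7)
The plan is to deduce both statements from $\bQ_p$--GR vanishing (\autoref{Qp_GR}) by exploiting the exact sequence of étale sheaves on the smooth variety $Y$ of dimension $d$ (cf.\ \cite[Proposition I.3.23]{Illusie_Complexe_de_de_Rham_Witt_et_cohomologie_cristalline})
\[ 0 \to W_n\omega_{Y,\log} \to W_n\omega_Y \xrightarrow{1-F} W_n\omega_Y / dV^{n-1}\Omega^{d-1}_Y \to 0, \]
where $F$ is the Frobenius--type operator coming from the de Rham--Witt complex; its surjectivity étale--locally is a standard Artin--Schreier--Witt--type argument. Applying $R\pi_*$ to this sequence, together with the analogous short exact sequence controlling the right--hand quotient (whose pieces reduce to $\omega_Y$--type sheaves via the Cartier isomorphism $B^d\Omega_Y \subset \omega_Y$ with quotient $\omega_Y$), yields a long exact sequence expressing $R^i\pi_*W_n\omega_{Y,\log}$ in terms of $\ker$ and $\coker$ of $1-F$ acting on the higher direct images of $W_n\omega_Y$ and $\omega_Y$.

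The crucial algebraic input reads: if $M$ is a coherent $W_n$--Cartier module with $p^e\cdot M^{\perf}=0$, then $p^e$ annihilates both $\ker(1-F\colon M\to M)$ and $\coker(1-F\colon M\to M)$. Indeed, by \autoref{Gabber_finiteness}, $p^eM$ is nilpotent under its structural map, so $F^N=0$ on $p^eM$ for some $N$; hence $1-F$ is invertible on $p^eM$ with explicit inverse $1+F+F^2+\dots+F^{N-1}$, which immediately gives the two bounds. Plugging in \autoref{Qp_GR}, which provides a uniform $p$--power torsion bound on $(R^i\pi_*W_n\omega_Y)^{\perf}$ for all $i\geq 1$ and $n\geq 1$, we obtain uniform bounds on the kernel and cokernel of $1-F$ on these higher direct images, and hence on $R^i\pi_*W_n\omega_{Y,\log}$ for every $i\geq 2$.

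The case $i=1$ will be the main obstacle: the long exact sequence only sandwiches $R^1\pi_*W_n\omega_{Y,\log}$ between a uniformly $p$--power torsion piece and the cokernel of $1-F$ on $\pi_*W_n\omega_Y$, which is not directly controlled by $\bQ_p$--GR (that theorem concerns strictly positive cohomological degrees). I would handle this by exploiting the finite length of $\pi_*W_n\omega_Y$ in the category of $W_n$--Cartier crystals (\autoref{crystals_have_finite_length}), together with the observation that the set of simple subquotients appearing across all $n$ is controlled by the subquotients of $\pi_*\omega_Y$ (cf.\ \autoref{everyone_has_the_same_simple_subquot}): it then suffices to verify that $\coker(1-F)$ on each simple Witt--Cartier crystal is uniformly $p$--power torsion, which can be treated by a direct analysis on the (finite--dimensional) support.

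Finally, the second statement is a formal consequence of the first combined with the Geisser--Levine identification \cite{Geisser_Levine_The_K_theory_of_fields_in_char_p} of étale complexes $\bZ/p^n(d)_Y \simeq W_n\omega_{Y,\log}[-d]$ on the smooth variety $Y$ of dimension $d$. Indeed, this gives $R^i\pi_*(\bZ/p^n(d)_Y) = R^{i-d}\pi_*W_n\omega_{Y,\log}$, and for $i>d$ the shifted index $i-d\geq 1$ is positive, so the first statement applies.
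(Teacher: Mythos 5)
Your overall strategy is a hands-on unpacking of what the paper does more abstractly: the paper writes $W_n\omega_{Y,\log}=\Sol(W_n\omega_Y)=\ker(C^{\et}-1)$ and invokes the fact that $\Sol$ is exact and commutes with derived pushforwards, then concludes because $\Sol$ kills the locally nilpotent Cartier module $p^e\cdot R^i\pi_*W_n\omega_Y$ supplied by \autoref{Qp_GR} and \autoref{name_Q_p_GR}. Your reduction of the motivic statement via Geisser--Levine is exactly the paper's, and your ``crucial algebraic input'' (on $p^eM$ the structural map is nilpotent, so $1-\theta$ is invertible there with inverse $1+\theta+\dots+\theta^{N-1}$) is correct and is precisely why $\Sol$ annihilates locally nilpotent crystals. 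Two issues remain. First, a technical one: you write the Artin--Schreier--Witt sequence with $1-F$ landing in $W_n\omega_Y/dV^{n-1}\Omega^{d-1}_Y$, but your algebraic lemma treats $1-F$ as an endomorphism of a Cartier module, i.e.\ it applies to the \emph{Cartier operator} $C$, not to the de Rham--Witt $F$. You should instead use Kato's presentation $0\to W_n\omega_{Y,\log}\to W_n\omega_Y\xrightarrow{C-1}W_n\omega_Y\to 0$ on $Y_{\et}$ (this is the form the paper cites from Kato and Ekedahl); with the $F$-version you would additionally have to control the quotient by $dV^{n-1}\Omega^{d-1}_Y$, which your sketch does not do.

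Second, and more seriously, the case $i=1$ is a genuine gap, and it is exactly the point where the paper's black box (``$\Sol$ preserves derived pushforwards'', \cite[Corollary 4.4.18]{Baudin_Duality_between_Witt_Cartier_crystals_and_perverse_sheaves}) does real work. The long exact sequence exhibits $R^1\pi_*W_n\omega_{Y,\log}$ as an extension of $\ker(C-1)$ on $R^1\pi_*W_n\omega_Y$ (which your lemma handles) by $\coker\bigl(C-1\colon \pi_*W_n\omega_Y\to\pi_*W_n\omega_Y\bigr)$ on $X_{\et}$, which $\bQ_p$--GR says nothing about. The true statement is that this cokernel \emph{vanishes}: $C-1$ is étale-locally surjective on any coherent Cartier module, which is the exactness half of the Riemann--Hilbert correspondence for Cartier crystals. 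Your proposed dévissage to simple subquotients is legitimate as a reduction (kernels and cokernels of $1-\theta$ only depend on the crystal class, and the snake lemma propagates $p$-power bounds through composition series), but the remaining step --- ``a direct analysis on the support'' of a simple Witt--Cartier crystal --- is precisely the nontrivial étale-local solvability of the equation $\theta(x)-x=s$, and you give no argument for it. As written, the proof of the first statement is therefore incomplete for $i=1$; either prove that surjectivity directly or cite the exactness and pushforward-compatibility of $\Sol$ as the paper does, in which case the whole long-exact-sequence detour collapses to the paper's one-line computation $R^i\pi_*W_n\omega_{Y,\log}=\Sol(R^i\pi_*W_n\omega_Y)$.
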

\begin{proof}
	By \cite[Theorem 8.3]{Geisser_Levine_The_K_theory_of_fields_in_char_p}, we have that $\bZ/p^n(d)_Y = W_n\omega_{Y, \log}[-d]$, so it is enough to show the first statement. 
	
	For a $W_n$--Cartier module $(\cM, \theta)$ on $Y$, let us denote by $\Sol(\cM)$ the étale $\bZ/p^n$--sheaf defined by $\ker(\theta^{\et} - 1)$, where $\theta^{\et}$ denotes the extension of $\theta$ to the étale site of $X$ (see \cite[Definition 4.4.15]{Baudin_Duality_between_Witt_Cartier_crystals_and_perverse_sheaves}). By \cite[Proposition 3.4]{Kato_Duality_theories_for_p_primary_etale_coh_II} (see also \cite{Ekedahl_Duality_Hodge_Witt}), we know that $W_n\omega_{Y, \log} = \Sol(W_n\omega_Y)$. Since $\Sol$ is exact and preserves derived pushforwards (\cite[Corollary 4.4.18]{Baudin_Duality_between_Witt_Cartier_crystals_and_perverse_sheaves}), we know that $\Sol(p^e\cdot R^i\pi_*W_n\omega_Y) = p^e\cdot R^i\pi_*W_n\omega_{Y, \log}$. By \autoref{Qp_GR} and \autoref{name_Q_p_GR}, we know that there exists $e > 0$ such that $p^e \cdot R^i\pi_*W_n\omega_Y \sim_C 0$ for all $i > 0$ and $n \geq 1$, so we conclude the proof by \cite[Corollary 4.4.18]{Baudin_Duality_between_Witt_Cartier_crystals_and_perverse_sheaves}.
\end{proof}

\section{First applications to singularities}

\subsection{Around Witt--rational and $\bQ_p$--rational singularities}\label{boring_section}

We follow the same notations as \cite[0.2]{Rulling_Chatzimatiaou_Hodge_Witt_cohomology_and_Witt_rational_singularities}. Namely, we say that a variety $Y$ is a finite quotient if it is normal and admits a finite surjective morphism from a smooth variety, and that it is a topologically finite quotient if it admits a finite universal homeomorphism onto a finite quotient. Finally, we say that a morphism $f \colon Y \to X$ between two varieties is a quasi--resolution if $Y$ is a topologically finite quotient and $f$ is surjective, projective, generically finite and purely inseparable (any variety admits a quasi--resolution by results of de Jong \cite{de_Jong_Families_of_curves_and_alterations, de_Jong_smoothness_semistability_and_alterations}).

For the formalism of étale $\bZ_p$--sheaves and $\bQ_p$--sheaves that we use, we refer the reader to \cite[Definition 3.14]{Patakfalvi_Zdanowicz_Ordinary_varieties_with_trivial_canonical_bundle_are_not_uniruled}.

\begin{defn}
	Let $X$ be an irreducible variety. 
	\begin{itemize}
		\item We say that $X$ has \emph{$W\cO$--rational} (resp. \emph{$\bQ_p$--rational}) \emph{singularities} if for any quasi--resolution $\pi \colon Y \to X$, we have $R^i\pi_*W\cO_{Y, \bQ} = 0$ (resp. $R^i\pi_*\bQ_{p, Y} = 0$) for all $i > 0$ and $W\cO_{X, \bQ} = \pi_*W\cO_{Y, \bQ}$ (resp. $\bQ_{p, X} = \pi_*\bQ_{p, Y}$).
		
		\item We say that $X$ has \emph{Witt--rational singularities} if it has $W\cO$--rational singularities, and Witt--GR vanishing holds for any quasi--resolution $\pi \colon Y \to X$.
		
		\item We say that $X$ has \emph{BE--$\bQ_p$--rational} singularities if for any alteration $\pi \colon Y \to X$, the natural map $\bQ_{p, X} \to R\pi_*\bQ_{p, Y}$ splits in the derived category of $\bQ_{p, X}$--modules.
	\end{itemize}
\end{defn}

\begin{rem}
	\begin{itemize}
		\item Our definition of Witt--rational singularities is (a priori) stronger than that defined in \cite{Rulling_Chatzimatiaou_Hodge_Witt_cohomology_and_Witt_rational_singularities}. We believe this definition is the right one, as theirs is fundamentally more related to $\bQ_p$--rationality than to $W\cO$--rationality (see \autoref{name_Q_p_GR} and \autoref{explanation_name}).
		\item Our definition of $BE$--$\bQ_p$--rational singularities mimics the Witt vector version defined in \cite{Blickle_Esnault_Rational_Singularities_and_rational_points} (see also \cite[Definition 4.4.2]{Rulling_Chatzimatiaou_Hodge_Witt_cohomology_and_Witt_rational_singularities}). We leave the study of $BE$--Witt--rationality for future work.
	\end{itemize}
\end{rem}

Let us add two more definitions, imitating the notions of Cohen--Macaulayness and pseudo--rationality:

\begin{defn}\label{def_Witt-CM_and_so_on}
	\begin{itemize}
		\item A variety $X$ is said to be \emph{Witt--Cohen--Macaulay}, (resp. \emph{$\bQ_p$--Cohen--Macaulay}), if for all $i \neq -\dim X$, it holds that the modules $\cH^i(W_n\omega_X^{\bullet})$ (resp. $\cH^i(W_n\omega_X^{\bullet})^{\perf}$) are of uniform $p$--power torsion. Often, we will shorten the notation to Witt--CM and $\bQ_p$--CM. 
		
		\item An irreducible variety $X$ is said to be \emph{Witt--pseudo--rational} (resp. \emph{$\bQ_p$--pseudo--rational}) if it is Witt--CM (resp. $\bQ_p$--CM) and for any alteration $\pi \colon Y \to X$, the cokernels of the natural maps $\pi_*W_n\omega_Y \to W_n\omega_X$ (resp. $\pi_*W_n\omega_Y^{\perf} \to W_n\omega_X^{\perf}$) are of uniform $p$--power torsion.
	\end{itemize}
\end{defn}
Here is a diagram explaining the different links between all these notions that we prove here, as well as the notions of $\bF_p$--rationality and $\bF_p$--Cohen--Macaulayness defined in \autoref{def_F_p_notions}:

\[ \begin{tikzcd}
	& \fbox{BE--$\bQ_p$--rational} \arrow[d, Leftrightarrow] & \fbox{$W\cO$--rational} \arrow[ld, Rightarrow] \arrow[d, Rightarrow, bend left=60, "\:\:{\scriptsize \left(\substack{\mbox{if Witt--GR holds,} \\ \mbox{e.g. in dimension } \leq 3}\right)}"{pos=0.6}]                             \\
	\fbox{$\bF_p$--rational} \arrow[r, Rightarrow] \arrow[dd, Leftrightarrow, "\times" {description, font = \large}"] & \fbox{$\bQ_p$--rational} \arrow[d, Leftrightarrow] \arrow[l, Rightarrow, bend right=20, "\times" {description, font = \large}] \arrow[r, Rightarrow, bend right=20, "\times" {description, font = \large} ]                     & \fbox{Witt--rational} \arrow[u, Rightarrow] \arrow[l, Rightarrow]                        \\
	& \fbox{$\bQ_p$--pseudo--rational} \arrow[d, Rightarrow]                                                                           & \fbox{Witt--pseudo--rational} \arrow[d, Rightarrow] \arrow[u, Leftrightarrow, "\: \: {\scriptsize \left(\substack{\mbox{if Witt--GR holds,} \\ \mbox{e.g. in dimension } \leq 3}\right)}"'{pos=0.4}] \arrow[l, Rightarrow] \\
	\fbox{$\bF_p$--CM} \arrow[r, Rightarrow]                              & \fbox{$\bQ_p$--CM} \arrow[u, Rightarrow, bend right=60, "\times" {description}] \arrow[r, Rightarrow, bend right=20, "\times" {description, font = \large}] \arrow[l, Rightarrow, bend left=20, "\times" {description, font = \large}] & \fbox{Witt--CM} \arrow[u, Rightarrow, bend right=60, "\times" {description}] \arrow[l, Rightarrow]                                    
\end{tikzcd}\]

where:

\begin{itemize}
	\item the links between $\bF_p$, $\bQ_p$ and $W\cO$--rationality are discussed in \cite[Lemma 3.16 and Example 3.17]{Baudin_Bernasconi_Kawakami_Frobenius_GR_fails};
	\item the fact that $\bF_p$--rational does not imply $\bF_p$-CM follows from \cite[Theorem 5.2]{Baudin_Bernasconi_Kawakami_Frobenius_GR_fails};
	\item the fact that Witt--CM (resp. Witt--pseudo--rational) implies $\bQ_p$--CM (resp. $\bQ_p$--pseudo--rational) follows by definition, and a counterexample for the converse is given by the cone over a supersingular abelian surface $A$ (it is $\bQ_p$--CM since it is $\bQ_p$--rational by proper base change, but it can be shown explicitly that it is not Witt--CM, essentially because $H^1(A, W\cO_{A, \bQ}) \neq 0$);
	\item the fact that $\bF_p$--CM implies $\bQ_p$--CM follows from the same argument as in the first paragraph of the proof of \autoref{logarithmic_works_well_when_CM};
	\item the fact that neither $\bF_p$--CM, $\bQ_p$--CM nor Witt--CM imply their rationality counterpart follow from taking a cone over an ordinary elliptic curve (such a cone is not $\bQ_p$--rational by proper base change);
	\item the links between the different $\bQ_p$--rationality notions (resp. Witt--rationality notions) follow from \autoref{thm_notions_of_singularities} (and its proof for the Witt notions).
\end{itemize}
Note also that usual rationality implies all these rationality notions defined above, and similarly for Cohen--Macaulayness. \\

Our main objective here is to show the following:

\begin{thm}\label{thm_notions_of_singularities}
	Let $X$ be an irreducible variety. Then the following are equivalent:
	\begin{enumerate}
		\item $X$ has $\bQ_p$--rational singularities;
		\item $X$ has $\bQ_p$--pseudo--rational singularities;
		\item $X$ has BE--$\bQ_p$--rational singularities.
	\end{enumerate}
	
	If $\dim(X) \leq 3$, then also the following are equivalent:
	\begin{enumerate}
		\item\label{one} $X$ has $W\cO$--rational singularities;
		\item\label{three} $X$ has Witt--pseudo--rational singularities.
		\item\label{four} $X$ has Witt--rational singularities;
	\end{enumerate}
\end{thm}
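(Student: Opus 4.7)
The plan is to build the proof around a single Grothendieck duality bridge between the $W\cO$--side and the $W\omega$--side, using the Patakfalvi--Zdanowicz criterion (\autoref{Patakfalvi_Zdanowicz_criterion}) to translate between the $\bQ$--inverted formulation and the uniform $p$--power torsion formulation at finite level, and $\bQ_p$--GR vanishing (\autoref{Qp_GR}) as the key input controlling higher direct images. For a quasi--resolution $\pi \colon Y \to X$ with $Y$ smooth, Grothendieck duality identifies $W_n\cO_X \to R\pi_*W_n\cO_Y$ with the trace map $R\pi_*W_n\omega_Y[\dim X] \to W_n\omega_X^{\bullet}$, and \autoref{Qp_GR} says $R^i\pi_*W_n\omega_Y^{\perf}$ is of uniform $p$--power torsion for $i > 0$. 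All the claimed equivalences turn out to be variations on this single duality bridge, with \autoref{Witt_GR_threefolds} used to upgrade from the $\bQ_p$--level to the genuine Witt--level in dimension at most three.

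\textbf{First block (general dimension).} For (1) $\Leftrightarrow$ (2), I would fix a quasi--resolution $\pi$: from (1) and \autoref{Patakfalvi_Zdanowicz_criterion}, $W_n\cO_X \to R\pi_*W_n\cO_Y$ is an isomorphism up to uniform $p$--power torsion; Grothendieck dualizing and combining with \autoref{Qp_GR} then shows that $\cH^i(W_n\omega_X^{\bullet})^{\perf}$ is of uniform $p$--power torsion for $i \neq -\dim X$ (that is, $\bQ_p$--CM) and that $\pi_*W_n\omega_Y^{\perf} \to W_n\omega_X^{\perf}$ is an isomorphism up to uniform $p$--power torsion. For an arbitrary alteration $\pi' \colon Y' \to X$, I would factor it through a quasi--resolution $Y'' \to Y' \to X$ using de Jong's alterations; the composition forces the cokernel of $\pi'_*W_n\omega_{Y'}^{\perf} \to W_n\omega_X^{\perf}$ to be of uniform $p$--power torsion. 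The converse (2) $\Rightarrow$ (1) runs the same argument in reverse.

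\textbf{BE--$\bQ_p$--rationality.} For (3) $\Rightarrow$ (1), apply BE--$\bQ_p$--rationality to a quasi--resolution: the resulting splitting realises $\bQ_{p, X}$ as a direct summand of $R\pi_*\bQ_{p, Y}$, and since a quasi--resolution is proper birational, one extracts both $\bQ_{p, X} = \pi_*\bQ_{p, Y}$ and $R^i\pi_*\bQ_{p, Y} = 0$ for $i > 0$ from the summand decomposition. For (1) $\Rightarrow$ (3), given any alteration $\pi \colon Y \to X$, I would dominate it by a quasi--resolution $Y' \to Y \to X$ via de Jong, then combine the trace map associated to the generically Galois piece $Y' \to Y$ (after reducing to the Galois case as in the proof of \autoref{enough_to_show_in_proj_case}) with the inverse of the $\bQ_p$--rationality isomorphism $\bQ_{p, X} \xrightarrow{\sim} R\pi'_*\bQ_{p, Y'}$ to manufacture a splitting of $\bQ_{p, X} \to R\pi_*\bQ_{p, Y}$.

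\textbf{Dimension at most three.} Here \autoref{Witt_GR_threefolds} upgrades $\bQ_p$--GR to genuine Witt--GR, so $R^i\pi_*W_n\omega_Y$ itself (without any perfection) is of uniform $p$--power torsion for any quasi--resolution $\pi$ and any $i > 0$. The equivalence (1) $\Leftrightarrow$ (4) then becomes tautological, since Witt--rationality adds to $W\cO$--rationality precisely the Witt--GR hypothesis, which is now automatic. For (1) $\Leftrightarrow$ (3), I would repeat the Grothendieck duality argument from the first block, dropping the perfection throughout and replacing \autoref{Qp_GR} by \autoref{Witt_GR_threefolds}. The hard part of the whole argument will be bookkeeping the uniform $p$--power torsion bounds through Grothendieck duality, perfection, and derived inverse limits, so as to guarantee that the torsion exponent $e$ does not depend on $n$: this is essential both for the application of \autoref{Patakfalvi_Zdanowicz_criterion} and for the $\Rlim$--type statements at the level of $W\omega_{\bQ}$.
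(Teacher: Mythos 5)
Your treatment of the dimension--$\le 3$ block (Grothendieck duality plus \autoref{Patakfalvi_Zdanowicz_criterion} plus \autoref{Witt_GR_threefolds}, with the reductions to a single quasi--resolution/alteration) matches the paper's argument in substance. The first block, however, has two genuine problems.

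First, you have conflated $\bQ_p$--rationality with $W\cO$--rationality. By definition, $\bQ_p$--rationality is a statement about the \'etale sheaves $\bQ_{p,Y}$, while $\bQ_p$--pseudo--rationality is a statement about the perfections $W_n\omega^{\perf}$. Your ``single Grothendieck duality bridge'' relates $W_n\cO$ to $W_n\omega^{\bullet}$ — with no perfection and no \'etale sheaves on either side — and \autoref{Patakfalvi_Zdanowicz_criterion} as stated compares $W\cO_{\bQ}$ with the finite--level $W_n\cO$, not $\bQ_{p,X} \to R\pi_*\bQ_{p,Y}$ with anything. What the argument actually requires is the Riemann--Hilbert--type duality between $W_n$--Cartier crystals and \'etale $\bZ/p^n$--sheaves (\cite[Theorem 5.2.7]{Baudin_Duality_between_Witt_Cartier_crystals_and_perverse_sheaves}), which identifies $(\bZ/p^n)_Y$ with $W_n\omega_Y^{\bullet,\perf}$ and is exactly what the paper substitutes for Grothendieck duality in this block. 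Without it, the perfections in the definition of $\bQ_p$--pseudo--rationality never enter the picture, and the equivalence you prove is between $W\cO$--rationality and an unperfected pseudo--rationality — a statement the theorem only asserts in dimension $\le 3$, precisely because it needs Witt--GR rather than $\bQ_p$--GR.

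Second, your implication (3) $\Rightarrow$ (1) is broken: a splitting $R\pi_*\bQ_{p,Y} \cong \bQ_{p,X} \oplus C$ gives no control whatsoever on $\cH^i(C)$, so you cannot ``extract'' $R^i\pi_*\bQ_{p,Y} = 0$ for $i > 0$ from the summand decomposition. This is the same phenomenon as in characteristic zero, where a splitting of $\cO_X \to R\pi_*\cO_Y$ yields rationality only in combination with Grauert--Riemenschneider vanishing. The paper instead proves BE--$\bQ_p$--rational $\Rightarrow$ $\bQ_p$--pseudo--rational (lifting the splitting from $\bQ_p$-- to $\bZ_p$--coefficients up to a scalar via \autoref{derived_cat_nonsense}, reducing mod $p^n$, and dualizing to get near--splittings of the trace maps on $W_n\omega^{\perf}$), and only then deduces $\bQ_p$--rationality, with $\bQ_p$--GR vanishing (\autoref{Qp_GR}, via \autoref{GR_for_top_finite_quot}) supplying the vanishing of higher direct images that the splitting alone cannot.
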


In order to prove these equivalences, we will need a few preliminary results:

\begin{lem}\label{invariance_univ_homeo}
	Let $f \colon Y \to X$ be a finite universal homeomorphism of varieties. Then the natural maps  $W_n\cO_X \to Rf_*W_n\cO_Y$ and $Rf_* W_n\omega_Y^{\bullet} \to W_n\omega_X^{\bullet}$ are isomorphisms up to uniform $p$--power torsion.
\end{lem}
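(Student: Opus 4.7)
The plan is to first prove the statement for the structure sheaves and then deduce the version for dualizing complexes via Grothendieck duality. Since $f$ is finite we have $Rf_* = f_*$, and the question is local on $X$, so we may reduce to the affine case $X = \Spec A$, $Y = \Spec B$ with $A \to B$ finite.

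The first ingredient is a uniform structural property of the map $A \to B$: there exists $e_0 \geq 0$ such that $b^{p^{e_0}}$ lies in the image of $A$ for every $b \in B$, and such that $(\ker(A \to B))^{p^{e_0}} = 0$. For the cokernel bound, the reduced map $f_{\mathrm{red}}$ has purely inseparable residue field extensions, so $(B/\sqrt{0})^{p^{e_0}}$ maps into $A/\sqrt{0}$ for some $e_0$; writing $b^{p^{e_0}} = a + \nu$ with $a \in A$ and $\nu \in \sqrt{0}_B$, the identity $(x - y)^{p^k} = x^{p^k} - y^{p^k}$ in characteristic $p$ together with $(\sqrt{0}_B)^{p^k} = 0$ for $p^k$ large (Noetherianity) yields $b^{p^{e_0 + k}} = a^{p^k} \in A$ uniformly in $b$. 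For the kernel, $\ker(A \to B) \subseteq \sqrt{0}_A$ because $f$ is surjective, and it is hence nilpotent of uniformly bounded degree.

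Passing to Witt vectors, since $R$ is an $\mathbb{F}_p$-algebra the Witt Frobenius $F \colon W_nR \to W_nR$ is componentwise $p$-th power, and combined with $p = VF$ one obtains
\[
p^{e_0} \cdot (r_0, r_1, \ldots, r_{n-1}) = (0, \ldots, 0, r_0^{p^{e_0}}, r_1^{p^{e_0}}, \ldots, r_{n-1-e_0}^{p^{e_0}}),
\]
where the first $e_0$ components are zero (and the whole Witt vector vanishes if $e_0 \geq n$). From this formula: any element of $\ker(W_nA \to W_nB)$ has all components in $\ker(A \to B)$ and is therefore killed by $p^{e_0}$; and any $(b_0, \ldots, b_{n-1}) \in W_nB$ satisfies $p^{e_0} \cdot (b_0, \ldots, b_{n-1}) \in W_nA$, since each surviving component is $b_i^{p^{e_0}} \in A$. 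Both bounds are independent of $n$, which gives the first statement.

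For the dualizing complex statement, apply Grothendieck duality for the finite map $W_nf$: we have a natural identification
\[
f_* W_n\omega_Y^\bullet \cong \cR\HHom_{W_n\cO_X}(f_*W_n\cO_Y,\, W_n\omega_X^\bullet),
\]
under which the trace map $f_*W_n\omega_Y^\bullet \to W_n\omega_X^\bullet$ is obtained by applying $\bD = \cR\HHom(-, W_n\omega_X^\bullet)$ to the structure map $W_n\cO_X \to f_*W_n\cO_Y$. Since $\bD$ is a functor, it sends maps whose cones are killed by $p^{e_0}$ to maps of the same kind, so the uniform torsion bound transports from the $\cO$-version to the $\omega^\bullet$-version. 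The main obstacle in this plan is establishing the uniform $e_0$ in the structural result in the non-reduced setting, but the Frobenius-lifting argument above handles this cleanly once one has the reduced statement and a uniform nilpotence bound from Noetherianity.
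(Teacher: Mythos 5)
Your proposal is correct and follows essentially the same route as the paper: reduce via Grothendieck duality (applying $\bD$ to the structure map) to the statement for $W_n\cO$, and then observe that the formula $p^{e}\cdot(r_0,\dots,r_{n-1})=(0,\dots,0,r_0^{p^{e}},\dots)$ (i.e.\ $p=FV$) transports a uniform Frobenius bound at level $n=1$ to all levels $n$ at once. The only difference is that the structural input you sketch by hand --- a single $e_0$ with $b^{p^{e_0}}\in\im(A)$ for all $b\in B$ and $\ker(A\to B)$ killed by $F^{e_0}$ --- is exactly what the paper imports wholesale from \stacksproj{0CNF}; your deduction of the reduced case from ``purely inseparable residue field extensions'' is the one step that is asserted rather than argued, but it is precisely the content of that tag and is standard.
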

\begin{proof}
	By Grothendieck duality, it is enough to show that the kernel and cokernel of $W_n\cO_X \to f_*W_n\cO_Y$ are of uniform $p$--power torsion. By \stacksproj{0CNF}, they are annihilated by $F^e$ for $e \gg 0$ in the case $n = 1$. But then, the same integer $e$ works for any $n$ by definition, so we are done (recall that $p = FV$).
\end{proof}

\begin{lem}\label{top_fin_quot_is_Witt_CM}
	A topologically finite quotient has Witt--CM singularities.
\end{lem}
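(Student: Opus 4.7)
The plan is to reduce to the case of a genuine finite quotient $X' = Z/G$ (with $Z$ smooth and $G$ a finite group) via \autoref{invariance_univ_homeo}, and then establish Witt--CM for $X'$ by the classical Galois trace argument interpreted through Grothendieck duality.

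For the reduction: by definition of a topological finite quotient, there exists a finite universal homeomorphism $f \colon X' \to X$ with $X'$ a finite quotient. By \autoref{invariance_univ_homeo}, the natural map $f_* W_n\omega_{X'}^{\bullet} \to W_n\omega_X^{\bullet}$ is an isomorphism up to uniform $p$-power torsion. Since $f$ is finite, $f_*$ is exact on coherent modules; taking cohomology sheaves, $f_* \cH^i(W_n\omega_{X'}^{\bullet}) \to \cH^i(W_n\omega_X^{\bullet})$ is also an isomorphism up to uniform $p$-power torsion. As $\dim X = \dim X'$, it suffices to prove that $\cH^i(W_n\omega_{X'}^{\bullet})$ is uniformly $p$-power torsion for $i \neq -\dim X'$.

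So assume $X = Z/G$, and let $\pi \colon Z \to X$ denote the quotient map, $r := |G|$. Since $W_n\cO_X = (\pi_* W_n\cO_Z)^G$, one has the natural inclusion $W_n\cO_X \hookrightarrow \pi_* W_n\cO_Z$ together with the Galois trace $\pi_* W_n\cO_Z \to W_n\cO_X$ sending $s \mapsto \sum_{g \in G} g(s)$, whose composition is multiplication by $r$ on $W_n\cO_X$. Applying $\cR\HHom_{W_n\cO_X}(-, W_n\omega_X^{\bullet})$ and using Grothendieck duality together with the identification $\pi^! W_n\omega_X^{\bullet} = W_n\omega_Z^{\bullet}$ (which follows from functoriality of $(-)^!$ for the structural maps $g_Z = g_X \circ \pi$), we obtain a factorization
\[
W_n\omega_X^{\bullet} \longrightarrow R\pi_* W_n\omega_Z^{\bullet} \longrightarrow W_n\omega_X^{\bullet}
\]
whose composition is again multiplication by $r$.

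Because $Z$ is smooth and $\pi$ is finite, $R\pi_* W_n\omega_Z^{\bullet} = \pi_* W_n\omega_Z[\dim X]$ is concentrated in cohomological degree $-\dim X$. Therefore, for every $i \neq -\dim X$, multiplication by $r$ on $\cH^i(W_n\omega_X^{\bullet})$ factors through zero. Writing $r = p^a m$ with $\gcd(m,p) = 1$, the integer $m$ is a unit in $W_n(k)$ since $W_n(k)$ is local with residue characteristic $p$, so multiplication by $m$ is invertible on any $W_n\cO_X$-module. We conclude that $p^a \cdot \cH^i(W_n\omega_X^{\bullet}) = 0$ for all $n \geq 1$ and $i \neq -\dim X$, with $a = v_p(|G|)$ independent of $n$; this is exactly uniform $p$-power torsion, so $X$ is Witt--CM. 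The only delicate point in this plan is the compatibility of the Grothendieck dual of the unit/trace pair with the constructions above, but this is routine for finite morphisms, so no essential obstacle is anticipated.
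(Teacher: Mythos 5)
Your reduction to finite quotients via \autoref{invariance_univ_homeo} and your unit/trace-plus-Grothendieck-duality mechanism are exactly the right tools (and are what the paper uses), but there is a genuine gap at the start of your second step: you take ``finite quotient'' to mean an honest group quotient $X=Z/G$ with $Z$ smooth. In the reference the paper cites for this notion (and in the paper's own argument), a finite quotient is merely a normal variety $Y$ admitting \emph{some} finite surjective morphism $f\colon Z\to Y$ from a smooth variety: no group action is available, and the field extension $K(Z)/K(Y)$ may well be inseparable, so there is no Galois trace $\sum_{g}g(s)$ to write down. Your proof therefore only establishes the lemma for a strictly smaller class of varieties than the one it is applied to later (e.g.\ in \autoref{enough_to_verify_easy_version} and \autoref{GR_for_top_finite_quot}).

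The repair requires two extra ideas. First, factor $Z\to W\to Y$ with $Z\to W$ finite purely inseparable and $W\to Y$ finite separable; the first arrow is a finite universal homeomorphism, so a second application of \autoref{invariance_univ_homeo} (together with smoothness of $Z$) shows that $W$ is Witt--CM. Second, for the separable part take the integral closure $W'$ of $W$ in the Galois closure of $K(W)/K(Y)$, which does carry a group action and hence a trace. The subtlety is that $W'$ is no longer smooth, so you cannot invoke ``concentrated in a single degree'' for $R\pi'_*W_n\omega_{W'}^{\bullet}$; instead, the unit $W_n\cO_Y\to \pi'_*W_n\cO_{W'}$ factors through $g_*W_n\cO_W$, so after dualizing, multiplication by $\deg$ on $W_n\omega_Y^{\bullet}$ factors through $Rg_*W_n\omega_W^{\bullet}$, which is concentrated in degree $-\dim Y$ up to uniform $p$--power torsion precisely because $W$ was shown to be Witt--CM in the first step. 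From there your computation (killing the prime-to-$p$ part of the degree, which is a unit in $W_n(k)$) goes through verbatim; this is the ``trace trick'' of \autoref{enough_to_show_in_proj_case} that the paper's proof invokes.
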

\begin{proof}
	By \autoref{invariance_univ_homeo}, it is enough to show the result for a finite quotient, so let $f \colon Z \to Y$ be a finite surjective morphism with $Z$ smooth and $Y$ normal. Factor this morphism as $Z \to W \to Y$, where $W$ is normal, $g \colon W \to Y$ is finite separable, and $Z \to W$ is finite purely inseparable. Again by \autoref{invariance_univ_homeo}, $W$ is Witt--CM. We deduce that $Y$ is also Witt--CM by \autoref{split_trace_separable}.
\end{proof}

\begin{lem}[{\cite{Rulling_Chatzimatiaou_Hodge_Witt_cohomology_and_Witt_rational_singularities}}]\label{enough_to_verify_easy_version}
	Let $X$ be an irreducible variety. Then if $X$ satisfies the $\bQ_p$--rationality (resp. $W\cO$--rationality, Witt--rationality) condition for one quasi--resolution, it satisfies it for any quasi--resolution.
\end{lem}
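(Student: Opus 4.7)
The plan is to employ the classical strategy of dominating two given quasi--resolutions $\pi_1 \colon Y_1 \to X$ and $\pi_2 \colon Y_2 \to X$ by a common third quasi--resolution $\tilde\pi \colon \tilde Y \to X$ that factors through both via proper birational morphisms $\alpha_i \colon \tilde Y \to Y_i$. Such a dominating quasi--resolution is constructed by taking an irreducible component of $Y_1 \times_X Y_2$ that dominates $X$, and then applying de Jong's alteration theorem to it, together with the formalism of topological finite quotients from \cite{Rulling_Chatzimatiaou_Hodge_Witt_cohomology_and_Witt_rational_singularities} to promote the resulting alteration to an actual quasi--resolution. By symmetry between $\pi_1, \pi_2, \tilde{\pi}$, it suffices to prove that $\pi_1$ satisfies each of the three rationality conditions if and only if $\tilde\pi$ does.

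The mechanism for transporting each condition between $\pi_1$ and $\tilde\pi$ is the Leray spectral sequence for $\tilde\pi = \pi_1 \circ \alpha_1$. Once one knows that the relevant higher derived pushforwards under $\alpha_1$ vanish (in the appropriate sense) and that the zeroth pushforward is an isomorphism, the Leray spectral sequence degenerates and the desired vanishing/isomorphism statements for $\pi_1$ and $\tilde\pi$ become interchangeable. Hence the lemma is reduced to establishing, for a proper birational morphism $\alpha_1$ between two quasi--resolutions, the analogous rationality statement.

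For $W\cO$--rationality, this smooth--to--smooth statement is the $W\cO_{\bQ}$--version of Grauert--Riemenschneider, already proved in \cite{Rulling_Chatzistamatiou_GR_for_smooth}. For $\bQ_p$--rationality, one can deduce the desired statement from the $W\cO_{\bQ}$ version by means of the Artin--Schreier sequence for Witt vectors (using that $\alpha_1$ is a proper birational morphism between smooth varieties, hence a topological isomorphism in a cohomological sense). For Witt--rationality one additionally needs Witt--GR vanishing for $\alpha_1$, which, thanks to the smoothness of both source and target, is likewise contained in \emph{loc.\ cit.}.

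The main technical obstacle relative to the purely smooth case is that the sources $Y_i$ of quasi--resolutions are only topological finite quotients, not genuinely smooth varieties. To bridge this gap one combines the invariance under finite universal homeomorphism recorded in \autoref{invariance_univ_homeo} with the Galois trace trick used in the proof of \autoref{enough_to_show_in_proj_case}: together they reduce the quasi--resolution case to the genuinely smooth--to--smooth case treated above, completing the proof.
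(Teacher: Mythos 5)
Your overall skeleton --- dominate two quasi--resolutions by a third, transfer the conditions along the Leray spectral sequence, and thereby reduce everything to a single comparison statement for a quasi--resolution $\alpha \colon S \to T$ in which the target $T$ is itself a topological finite quotient --- is exactly the reduction the paper performs. The gap is in how you propose to prove that comparison statement. The combination of \autoref{invariance_univ_homeo} with the Galois trace trick of \autoref{enough_to_show_in_proj_case} only ever smooths the \emph{source} of a morphism: it replaces $\alpha$ by a composite $Z \to S \to T$ with $Z$ smooth, at the cost of turning the birational morphism into an alteration, while $T$ remains a (topological) finite quotient. There is no ``genuinely smooth--to--smooth birational case'' to land in, and the statement you actually need after the reduction --- vanishing of $R^i f_* W\cO_{Z, \bQ}$ for $i>0$ for an alteration $f \colon Z \to T$ from a smooth variety onto a singular target --- is precisely the assertion that $T$ has $W\cO$--rational singularities. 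For general targets this is false, and for (topological) finite quotients it is Theorem~3 of Chatzistamatiou--R\"ulling, a deep result proved via their correspondence action on Hodge--Witt cohomology; it is not a formal consequence of Grauert--Riemenschneider for proper birational morphisms between smooth varieties plus a trace argument. The paper does not attempt to rederive it: it cites that theorem directly (together with \cite[Lemma 3.19]{Patakfalvi_Zdanowicz_Ordinary_varieties_with_trivial_canonical_bundle_are_not_uniruled} for the passage to $\bQ_p$, which matches your Artin--Schreier step).

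There is a second gap in the Witt--rationality part. The condition to be transferred there is that the sheaves $R^i\alpha_* W_n\omega_S$ be of \emph{uniform $p$--power torsion at every finite level} $n$, and --- as the remark following Theorem~A stresses --- such finite--level statements for $W_n\omega$ do not follow from the corresponding $(\cdot)_{\bQ}$--statement for $W\omega$, in contrast with the situation for $W\cO$. The paper obtains the finite--level statement by applying Grothendieck duality to the isomorphisms $W_n\cO_T \to R\alpha_* W_n\cO_S$ (up to uniform $p$--power torsion, via \autoref{Patakfalvi_Zdanowicz_criterion}) and then using that topological finite quotients are Witt--Cohen--Macaulay (\autoref{top_fin_quot_is_Witt_CM}) to pass from $W_n\omega^{\bullet}$ to $W_n\omega$. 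Your appeal to ``Witt--GR for $\alpha_1$, contained in \emph{loc.\ cit.}'' does not supply this duality-plus-Witt--CM step, and without it the Witt--rationality clause of the lemma is not established.
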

\begin{proof}
	It is enough to show that if $\pi \colon S \to T$ is any quasi--resolution with $S$ and $T$ topologically finite quotients, then it satisfies Witt--GR vanishing, and all sheaves $R^i\pi_*W\cO_{S, \bQ}$ and $R^i\pi_*\bQ_{p, S}$ vanish for $i > 0$. The fact that each $R^i\pi_*W\cO_{S, \bQ}$ vanishes is \cite[Theorem 3]{Rulling_Chatzimatiaou_Hodge_Witt_cohomology_and_Witt_rational_singularities}, and the analogue for $\bQ_p$ then follows from \cite[Lemma 3.19]{Patakfalvi_Zdanowicz_Ordinary_varieties_with_trivial_canonical_bundle_are_not_uniruled}. We are therefore left to show that Witt--GR holds for $\pi$. Since the maps $W_n\cO_T \to R\pi_*W_n\cO_S$ are isomorphisms up to uniform $p$--power torsion (see \autoref{Patakfalvi_Zdanowicz_criterion}), we obtain by Grothendieck duality that also $R\pi_*W_n\omega_S^{\bullet} \to W_n\omega_T^{\bullet}$ are isomorphisms up to uniform $p$--power torsion. Given that both $S$ and $T$ are Witt--CM (\autoref{top_fin_quot_is_Witt_CM}), we deduce that the sheaves $R^i\pi_*W_n\omega_S$ are also of uniform $p$--power torsion for $i > 0$.
\end{proof}

\begin{prop}\label{GR_for_top_finite_quot}
	Let $\pi \colon Y \to X$ be an alteration, where $Y$ is a topologically finite quotient. Then $\bQ_p$--GR holds for $\pi$. If furthermore $\dim(X) \leq 3$, then also Witt--GR holds.
\end{prop}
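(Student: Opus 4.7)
The plan is to reduce to the case where the source is smooth, in which case we already know $\bQ_p$--GR by \autoref{Qp_GR} and Witt--GR in dimension at most $3$ by \autoref{Witt_GR_threefolds}. Concretely, I would choose a quasi-resolution $\mu \colon Y' \to Y$ with $Y'$ smooth (available because $Y$ is a topological finite quotient) and study the composition $\pi \circ \mu \colon Y' \to X$, which is an alteration with smooth source. The body of the proof is then to transfer the known vanishings for $\pi \circ \mu$ back down to $\pi$.

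The key step is to show that $R^i(\pi \circ \mu)_*W_n\omega_{Y'}$ and $R^i\pi_*W_n\omega_Y$ agree up to uniform $p$--power torsion, for every $i$ and $n$. For this, I first note that $Y$ is $W\cO$--rational by \cite[Theorem 3]{Rulling_Chatzimatiaou_Hodge_Witt_cohomology_and_Witt_rational_singularities} and Witt--Cohen--Macaulay by \autoref{top_fin_quot_is_Witt_CM}. Applied to $\mu$, \autoref{Patakfalvi_Zdanowicz_criterion} turns $W\cO$--rationality into the statement that $W_n\cO_Y \to R\mu_*W_n\cO_{Y'}$ is an isomorphism up to uniform $p$--power torsion; taking Grothendieck duals exactly as in the proof of \autoref{enough_to_verify_easy_version} yields that $R\mu_*W_n\omega_{Y'}^{\bullet} \to W_n\omega_Y^{\bullet}$ is also an isomorphism up to uniform $p$--power torsion. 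Since both $Y$ (Witt--CM) and $Y'$ (smooth) have their complexes $W_n\omega^{\bullet}$ concentrated in degree $-\dim Y$ up to uniform $p$--power torsion, this complex-level isomorphism descends to the sheaf-level statements that $\mu_*W_n\omega_{Y'} \to W_n\omega_Y$ is an isomorphism up to uniform $p$--power torsion, while $R^i\mu_*W_n\omega_{Y'}$ is of uniform $p$--power torsion for $i > 0$. A Leray spectral sequence argument (with finitely many nonzero rows) then gives the desired comparison.

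With this comparison in hand, the case $\dim X \leq 3$ is immediate: applying \autoref{Witt_GR_threefolds} to $\pi \circ \mu$ (source smooth of dimension at most $3$) yields Witt--GR for $\pi \circ \mu$, which transfers to $\pi$. For $\bQ_p$--GR in general, I would apply \autoref{Qp_GR} to $\pi \circ \mu$ to obtain $R^i(\pi \circ \mu)_*W\omega_{Y', \bQ} = 0$ for $i > 0$, and extend the finite-level comparison to the infinite level and to the quotient category $(\cdot)_{\bQ}$ by taking $\Rlim_n$, using the Mittag--Leffler results of \autoref{Witt_omega_Mittag_Leffler} and \autoref{general_Mittag_Leffler_property} to commute the derived limit with the pushforwards. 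The main obstacle is precisely this last passage from $W_n$ to $W$: one needs to check that the uniform $p$--power torsion estimates survive through the derived inverse limit, which is why it is convenient to reformulate everything via perfections $W_n\omega^{\perf}$ as in \autoref{name_Q_p_GR}, where the relevant Mittag--Leffler conditions are readily available.
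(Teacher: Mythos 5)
There is a genuine gap at the very first step: a topological finite quotient $Y$ is \emph{not} known to admit a quasi--resolution $\mu \colon Y' \to Y$ with $Y'$ smooth. What the definition of (topological) finite quotient provides is a finite universal homeomorphism $Y \to S$ with $S$ normal, together with a finite \emph{surjective} morphism $T \to S$ from a smooth $T$; this latter map has separable degree $>1$ in general, so it is an alteration but not a quasi--resolution (which must be generically purely inseparable). Producing a smooth--source quasi--resolution of $Y$ would amount to resolving singularities up to universal homeomorphism, which is open in positive characteristic and is precisely the reason quasi--resolutions (with topological--finite--quotient source) are used in the first place. This defect propagates: if you instead take $\mu$ to be the available alteration $T \to Y$, then $W\cO$--rationality no longer gives that $W_n\cO_Y \to R\mu_*W_n\cO_T$ is an isomorphism up to uniform $p$--power torsion (already $\mu_*\cO_T$ has generic rank $\deg\mu > 1$), so your key comparison $R^i(\pi\circ\mu)_*W_n\omega_{T} \approx R^i\pi_*W_n\omega_Y$ fails.

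The correct repair is the one the paper uses: first dispose of the universal homeomorphism $Y \to S$ via \autoref{invariance_univ_homeo} together with the Frobenius factorization through $S_{p^e}$, and then for the finite surjective cover $T \to S_{p^e}$ with $T$ smooth replace the isomorphism--up--to--uniform--$p$--power--torsion comparison by the weaker statement that $W_n\omega$ of the base is a retract of $R\mu_*W_n\omega_T$ up to multiplication by $\deg(\mu)$ (the Galois trace argument of \autoref{top_fin_quot_is_Witt_CM} and \autoref{enough_to_show_in_proj_case}). A retract up to multiplication by an integer is enough to transfer uniform $p$--power torsion of $R^i(\pi\circ\mu)_*W_n\omega_T$ (known by \autoref{Qp_GR}, resp.\ \autoref{Witt_GR_threefolds}) down to $R^i\pi_*W_n\omega_Y$. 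Your remaining steps (Grothendieck duality as in \autoref{enough_to_verify_easy_version}, Leray, and the passage to the limit via perfections and \autoref{name_Q_p_GR}) are fine once this substitution is made.
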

\begin{proof}
	Let $f \colon Y \to S$ be a finite universal homeomorphism, where $S$ is a finite quotient. Then for some $e > 0$, $\cO_Y^{p^e} \inc \cO_S$ by \stacksproj{0CNF}. Hence, there exists a diagram \[ \begin{tikzcd}
		S_{p^e} \arrow[r, "g"] \arrow[rr, "(F')^e"', bend right] & Y \arrow[r, "f"] & S,
	\end{tikzcd} \] where $(F')^e \colon S_{p^e} \to S$ denotes the $k$--linear Frobenius from \cite[Remark 2.4.1]{Hartshorne_Algebraic_Geometry} (composed $e$ times), and $g$ is a finite universal homeomorphism.
	
	By \autoref{invariance_univ_homeo}, it is then enough to show that $\bQ_p$--GR (resp. Witt--GR in dimension $\leq$ 3) holds for the composition $S_{p^e} \to Y \to X$. Since $S$ is a finite quotient, certainly $S_{p^e}$ is a finite quotient too, so by definition there exists a finite surjective morphism $T \to S_{p^e}$, with $T$ smooth. By the same argument as in the proof of \autoref{enough_to_show_in_proj_case}, we obtain that $\bQ_p$--GR (resp. Witt--GR) for $T \to S_{p^e} \to Y \to X$ implies $\bQ_p$--GR (resp. Witt--GR) for $S_{p^e} \to Y \to X$. Thus, the proposition follows from \autoref{Qp_GR} (resp. \autoref{Witt_GR_threefolds}).
\end{proof}

\begin{lem}\label{traces_and_stuff}
	Let $f \colon S \to T$ be an alteration between smooth varieties. Then for all $n \geq 1$, there exist maps $W_n\omega_T \to Rf_*W_n\omega_S$ in $D(\Coh_{W_nS}^C)$ such that the composing with the trace $W_n\omega_T\to Rf_*W_n\omega_S \to W_n\omega_T$ is multiplication by $\deg(f)$. Furthermore, these maps commute with inclusions $W_n\omega \inj W_{n + 1}\omega$ and restrictions $W_{n + 1}\omega \surj W_n\omega$.
\end{lem}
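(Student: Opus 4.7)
The strategy is to reduce via Grothendieck duality to constructing a ``trace'' on the Witt structure sheaves, and then to build that trace from a Galois alteration dominating $f$. Since $f^!W_n\omega_T^\bullet \cong W_n\omega_S^\bullet$ (both equal $g^!\cO_{W_n(k)}$ for the respective structural morphisms to $\Spec k$), the map $\Tr_f \colon Rf_*W_n\omega_S^\bullet \to W_n\omega_T^\bullet$ arises as the counit of $Rf_* \dashv f^!$. Applying the duality $\bD_T = \cR\sHom(-, W_n\omega_T^\bullet)$, which commutes with $Rf_*$ for proper $f$ and satisfies $\bD\,W_n\omega_X^\bullet \cong W_n\cO_X$ on smooth $X$, the desired section $\alpha_n \colon W_n\omega_T \to Rf_*W_n\omega_S$ with $\Tr_f \circ \alpha_n = \deg(f)\cdot \id$ corresponds to a morphism $\tau_n \colon Rf_*W_n\cO_S \to W_n\cO_T$ satisfying $\tau_n \circ \eta = \deg(f)\cdot \id$, where $\eta \colon W_n\cO_T \to Rf_*W_n\cO_S$ is the canonical unit. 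The dual formulation is much more natural to build.

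To construct $\tau_n$, I mimic the Galois trace strategy from the proof of \autoref{enough_to_show_in_proj_case}. Using de Jong's alteration theorem, choose an alteration $\sigma \colon Z \to S$ with $Z$ smooth such that the composite $\tilde f = f \circ \sigma \colon Z \to T$ induces a Galois extension $K(Z)/K(T)$ with group $G$ (after handling the purely inseparable part by a Frobenius twist as in the proof of \autoref{GR_for_top_finite_quot}). Setting $H = \Gal(K(Z)/K(S)) \leq G$, we have $[G:H] = \deg(f)$ and the action of $G$ on $W_n\cO_Z$ is coordinatewise. Summing over a transversal of $H$ in $G$ produces, via $R\sigma_*W_n\cO_Z \sim_C W_n\cO_S$ (by \cite[Theorem 2]{Rulling_Chatzimatiaou_Hodge_Witt_cohomology_and_Witt_rational_singularities} applied to the smooth $Z$), a morphism $R\sigma_*W_n\cO_Z \to W_n\cO_S$ that extends the classical field-theoretic trace. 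Combining this with the full Galois trace $R\tilde f_*W_n\cO_Z \to W_n\cO_T$ (whose composition with the unit is $|G|$) and appropriate divisibility yields $\tau_n$ with $\tau_n \circ \eta = \deg(f)\cdot \id$.

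The compatibility of $\alpha_n$ with the Witt tower maps $R \colon W_{n+1}\omega \to W_n\omega$ and $\underline p \colon W_n\omega \inj W_{n+1}\omega$ follows because the coordinatewise Galois action on $W_n\cO_Z$ commutes with all standard Witt operators $F, V, R, \underline p$; Grothendieck duality transports this compatibility to the $W_n\omega$ side, using the compatibilities recorded in \autoref{link_with_usual_de_Rham_Witt_cpx_theory}.

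The main obstacle will be the inseparable part of $K(S)/K(T)$: the Galois closure resolves only the separable degree $d_{\sep}$ of $\deg(f) = d_{\sep}\cdot p^a$, so an additional Frobenius-twist argument (as in the proof of \autoref{GR_for_top_finite_quot}) is required to recover the full degree. A secondary technicality is to ensure the descent through $\sigma$ produces a genuine morphism at the derived level rather than just on $H^0$; this is where the $W\cO$-rationality of smooth varieties (with uniform $p$-torsion cone) enters crucially, allowing one to lift the sheaf-level Galois sum to the derived category.
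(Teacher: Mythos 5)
Your proposal has a genuine gap, and it stems from dualizing to the wrong side. After passing to $W_n\cO$ via Grothendieck duality, you need a \emph{wrong-way} map $\tau_n \colon Rf_*W_n\cO_S \to W_n\cO_T$ with $\tau_n \circ \eta = \deg(f)$, and the Galois-closure trick cannot produce the constant $\deg(f)$: if $\tilde f = f\circ\sigma \colon Z \to T$ has Galois group $G$ and $H = \Gal(K(Z)/K(S))$, then summing over all of $G$ and precomposing with $\eta_\sigma$ yields multiplication by $|G| = |H|\cdot[G:H]$, and there is no integral division by $|H|$ in characteristic $p$ to get back down to $[G:H]$. (For the paper's applications any nonzero integer would do, but the lemma claims $\deg(f)$.) Moreover, ``summing over a transversal of $H$ in $G$'' is not a well-defined operation on sections of $W_n\cO_Z$ (it depends on the transversal except on $H$-invariants), and the asserted identification $R\sigma_*W_n\cO_Z \sim_C W_n\cO_S$ is false for your $\sigma$: \cite[Theorem 2]{Rulling_Chatzimatiaou_Hodge_Witt_cohomology_and_Witt_rational_singularities} applies to quasi-resolutions, whereas your $\sigma$ must have generic degree $|H| > 1$ precisely so that $K(Z)/K(T)$ becomes Galois. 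Finally, the compatibilities with $R$ and $\underline p$ across the Witt tower are asserted but would have to survive the duality, the choice of $Z$, and several up-to-$p$-power-torsion identifications; none of this is routine.

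The idea you are missing is that no wrong-way construction is needed at all: since $S$ and $T$ are both smooth of the same dimension, $W_n\omega = W_n\Omega^{\dim}$ is the top piece of the de Rham--Witt complex, which is \emph{contravariantly} functorial. The paper simply takes the canonical pullback $f^* \colon W_n\omega_T \to f_*W_n\omega_S$ (composed with $f_* \to Rf_*$). Commutation with $R$ holds by construction of the de Rham--Witt complex, and commutation with $\underline p$ and $C$ then follows formally from $p = \underline p \circ R$, $R = C\circ F$, and surjectivity of $R$ and $F$; this gives both the Cartier-module structure and the compatibility along the Witt tower at once. The identity $\Tr\circ f^* = \deg(f)$ is then reduced to the finite locus and quoted from Gros \cite[Proposition II.4.2.9]{Gros_Classes_de_Chern_et_classes_de_cycles_en_coho_de_hodge_witt_log}. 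Your duality-plus-Galois route discards exactly the structure (pullback of forms) that makes the statement true with the correct constant.
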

\begin{proof}
	Since the sheaves $W_n\omega$ are the last piece of the $n$--truncated de Rham--Witt complex, there are canonical pullback maps $f^* \colon W_n\omega_T \to f_*W_n\omega_S$ (namely we pullback Witt differential forms as in \cite[Equation I.1.12.2]{Illusie_Complexe_de_de_Rham_Witt_et_cohomologie_cristalline}). We will show that composing them with the natural map $f_*W_n\omega_X \to Rf_*W_n\omega_X$ gives a system satisfying the assumptions of the lemma.
	
	To obtain all the commutativity properties, it is enough to show that $f^* \colon W_n\omega_T \to f_*W_n\omega_S$ commutes with $C$, $R$ and $\underline{p}$ (see \autoref{link_with_usual_de_Rham_Witt_cpx_theory}). For $R$, this follows by the very construction of the de Rham--Witt complex (see \cite[I.1.8]{Illusie_Complexe_de_de_Rham_Witt_et_cohomologie_cristalline}). Given that $p = \underline{p} \circ R$, that $f^*$ commutes with both $R$ and $p$, and that $R$ is surjective, we also deduce that $f^*$ commutes with $\underline{p}$. Finally, since $R = C \circ F$, $f^*$ commutes with both $R$ and $F$ (see \emph{loc. cit.}) and $F$ is surjective, we deduce that $f^*$ also commutes with $C$.
	
	The only thing left to show is that the composition $W_n\omega_T \to Rf_*W_n\omega_S \to W_n\omega_T$ is given by multiplication by $\deg(f)$. We may assume that $f$ is finite, and hence it follows from \cite[Proposition II.4.2.9]{Gros_Classes_de_Chern_et_classes_de_cycles_en_coho_de_hodge_witt_log} (their trace map agrees with ours by definition, see II.1.2.1 in \emph{loc. cit.}).
\end{proof}

\begin{lem}[{\cite{Rulling_Chatzimatiaou_Hodge_Witt_cohomology_and_Witt_rational_singularities}}]\label{enough_to_check_on_one_alteration}
	Let $X$ be an irreducible variety. Then if $X$ satisfies  the condition for $\bQ_p$--pseudo--rationality (resp. BE--$\bQ_p$--rationality, Witt--pseudo--rationality) for a single alteration $\pi \colon Y \to X$ with $Y$ a topologically finite quotient, then $X$ has $\bQ_p$--pseudo--rational singularities (resp. BE--$\bQ_p$--rational singularities, Witt--pseudo--rational singularities).
\end{lem}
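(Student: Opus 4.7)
The plan is to reduce any alteration $\pi' \colon Y' \to X$ to the given $\pi \colon Y \to X$ via a common smooth alteration. First I would take, via de Jong, a smooth variety $Z$ together with an alteration $Z \to Y \times_X Y'$ (restricted to an irreducible component dominating $X$), producing alterations $g \colon Z \to Y$ and $g' \colon Z \to Y'$ fitting into a commutative square over $X$. Write $\sigma \coloneqq \pi \circ g = \pi' \circ g' \colon Z \to X$ for the resulting common alteration from a smooth source. The proof then splits into two steps: (a) the condition for $\sigma$ implies the condition for $\pi'$, and (b) the condition for $\pi$ implies the condition for $\sigma$.

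Step (a) is the easy direction. For Witt-- and $\bQ_p$--pseudo--rationality, the trace $\sigma_* W_n\omega_Z \to W_n\omega_X$ (respectively its perfected analogue) factors through $\pi'_* W_n\omega_{Y'}$, so $\im(\sigma_*) \subseteq \im(\pi'_*)$ inside $W_n\omega_X$, and hence uniform $p$--power torsion of the cokernel for $\sigma$ forces the same for $\pi'$. For BE--$\bQ_p$--rationality, any splitting $R\sigma_*\bQ_{p,Z} \to \bQ_{p,X}$ precomposed with the natural map $R\pi'_*\bQ_{p,Y'} \to R\sigma_*\bQ_{p,Z}$ yields the desired splitting of $\bQ_{p,X} \to R\pi'_*\bQ_{p,Y'}$.

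Step (b) is where the substantive work lies. Decomposing $\sigma_* W_n\omega_Z \to W_n\omega_X$ as $\pi_*(g_* W_n\omega_Z) \to \pi_* W_n\omega_Y \to W_n\omega_X$, the hypothesis on $\pi$ takes care of the second map, so the problem reduces to showing that the trace $g_* W_n\omega_Z \to W_n\omega_Y$ has cokernel of uniform $p$--power torsion (and the analogous statement for $W_n\omega^{\perf}$, as well as a splitting of $\bQ_{p,Y} \to Rg_*\bQ_{p,Z}$ for BE). Since $Y$ is only a topological finite quotient, \autoref{traces_and_stuff} does not apply directly; I would therefore first invoke \autoref{invariance_univ_homeo} to reduce to the case where $Y$ is itself a finite quotient, admitting a finite surjection $T \to Y$ from a smooth $T$. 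After a further alteration of $Z$, the morphism to $Y$ factors into a composition that contains a finite separable Galois cover of $Y$ of some degree $r$ coprime to $p$, as arranged in the proof of \autoref{enough_to_show_in_proj_case}. Because every $W_n\omega$ is $p^n$--torsion, the integer $r$ is invertible on it, and the averaging-over-the-Galois-group trace trick exhibits $W_n\omega_Y$ as a direct summand (up to uniform $p$--power torsion) of the pushforward from the Galois cover, hence also of $g_* W_n\omega_Z$.

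The main obstacle is precisely this last point: making the Galois-trace trick work even though $Y$ lacks smoothness, which is exactly why the topological-finite-quotient hypothesis on $Y$ is essential. Once the trace surjection (up to uniform $p$--power torsion) is established, perfecting the whole argument handles the $\bQ_p$--pseudo--rationality statement, and running the same Galois-averaging on $R(\cdot)_*\bQ_{p,Z}$ in place of $W_n\omega$ produces the splitting needed for BE--$\bQ_p$--rationality. Combining steps (a) and (b) for each of the three flavours then yields the lemma.
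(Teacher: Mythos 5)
Your proposal follows essentially the same route as the paper: the easy ``factoring through'' direction reduces everything to the relative statement for a single alteration $g$ onto the given topological finite quotient $Y$, and that relative statement is handled by the chain universal homeomorphism $\to$ finite quotient $\to$ smooth (Galois averaging) $\to$ \autoref{traces_and_stuff}, exactly as in the paper (which also routes the BE--case through the duality theorems of \cite{Baudin_Duality_between_Witt_Cartier_crystals_and_perverse_sheaves} rather than averaging $\bQ_p$--sheaves directly, but this is cosmetic).

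One precision is worth insisting on in step (b): the correct reduced statement is not that $\coker(g_*W_n\omega_Z \to W_n\omega_Y)$ is of uniform $p$--power torsion, but that there exist maps $W_n\omega_Y \to Rg_*W_n\omega_Z$ whose composition with the trace is multiplication by a fixed integer (equivalently, your ``direct summand up to uniform $p$--power torsion''). Mere torsion control of the cokernel does not survive applying $\pi_*$, which is only left exact, so the composite $\sigma_*W_n\omega_Z \to \pi_*W_n\omega_Y \to W_n\omega_X$ could a priori lose surjectivity--up--to--torsion; the section, by contrast, pushes forward functorially and gives the bound. Your Galois/pullback construction does produce such a section (as the Grothendieck dual of the averaging map on $W_n\cO$), so the argument goes through; also note that the degree of the Galois cover need not be prime to $p$ --- its $p$--part is simply absorbed into the uniform $p$--power torsion exponent, since all sheaves involved are $p^n$--torsion.
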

\begin{proof}
	Let us start with BE--$\bQ_p$--rational singularities. Note that surely, if we have maps $T \to S \to X$ where both $T \to S$ and $S \to X$ are alterations, then the condition for $BE$--$\bQ_p$--rationality for $T \to X$ implies that for $S \to X$. Hence, it is enough to show that for any alteration $f \colon T \to S$ with $S$ a topologically finite quotient, the map $\bQ_{p, S} \to R\pi_*\bQ_{p, T}$ splits. Since universal homeomorphisms induce equivalence of étale sites \stacksproj{03SI}, we may assume that $S$ is a finite quotient. Consider a diagram 
	\[ \begin{tikzcd}
		T \arrow[r, "f"]                    & S                     \\
		T' \arrow[u, "\pi"] \arrow[r, "g"'] & S', \arrow[u, "\psi"']
	\end{tikzcd} \] where $T'$ and $S'$ are smooth, $g$ and $\pi$ are alterations, and $\psi$ is finite. It is then enough to show that both $\bQ_{p, S} \to R\psi_*\bQ_{p, S'}$ and $\bQ_{p, S'} \to Rg_*\bQ_{p, T'}$ split. Since the inverse system of trace maps $R\psi_*W_n\cO_{S'} = \psi_*W_n\cO_{S'} \to W_n\cO_S$ are Frobenius--equivariant, we obtain by \cite[Theorem 3.2.3]{Baudin_Duality_between_Witt_Cartier_crystals_and_perverse_sheaves} (see also \cite[Theorem 9.6.1]{Bhatt_Lurie_RH_corr_pos_char}) an inverse system of maps $R\psi_*(\bZ/p^n\bZ)_{S'} \to (\bZ/p^n\bZ)_{S}$ such that composing with $(\bZ/p^n\bZ)_{S} \to R\psi_*(\bZ/p^n\bZ)_{S'}$ gives multiplication by an integer. Taking inverse limits and inverting $p$ shows that $\bQ_{p, S} \to R\psi_*\bQ_{p, S'}$ splits. 

	We will apply a similar idea for $g$: by \autoref{traces_and_stuff}, we have a direct system of maps $W_n\omega_{S'} \to Rg_*W_n\omega_{T'}$ such that composition with the trace map $Rg_*W_n\omega_{T'} \to W_n\omega_{S'}$ is multiplication by an integer. By \cite[Theorem 5.2.7]{Baudin_Duality_between_Witt_Cartier_crystals_and_perverse_sheaves}, we obtain again an inverse system of maps $Rg_*(\bZ/p^n\bZ)_{S'} \to (\bZ/p^n\bZ)_{T'}$ such that composing with $\bZ/p^n\bZ_{T'} \to Rg_*(\bZ/p^n\bZ_{S'})$ gives multiplication by an integer. We then conclude exactly as in the previous paragraph. \\
	
	Now, let us move to the case of $\bQ_p$--pseudo--rational singularities and Witt--pseudo--rational singularities. As above, it is enough to show that for any alteration $f \colon T \to S$ between topologically finite quotients, the natural trace map $f_*W_n\omega_T \to W_n\omega_S$ is surjective up to uniform $p$--power torsion. By \autoref{invariance_univ_homeo}, we may assume that $S$ is a finite quotient. Consider a diagram as in the previous part of the proof. By \autoref{split_trace_separable} and \autoref{invariance_univ_homeo}, the maps $\psi_*W_n\omega_{S'} \to W_n\omega_S$ are surjective up to uniform $p$--power torsion, so it is enough to show that $g_*W_n\omega_{T'} \to W_n\omega_{S'}$ is surjective up to uniform $p$--power torsion. This follows from \autoref{traces_and_stuff}.
\end{proof}

Note that given a Grothendieck category $\cA$, the same argument as in the proof of \cite[Proposition 3.7.4]{Rulling_Chatzimatiaou_Hodge_Witt_cohomology_and_Witt_rational_singularities}) shows that this $D(\cA_{\bQ})$ is a $\bQ$--linear category.

\begin{lem}\label{derived_cat_nonsense}	
	Let $\cA$ be a Grothendieck category, and let $\cM^{\bullet}, \cN^{\bullet} \in D^b(\cA)$. Then the natural map \[ \Hom_{D(\cA)}(\cM^{\bullet}, \cN^{\bullet}) \otimes \bQ \to \Hom_{D(\cA_{\bQ})}(\cM^{\bullet}_{\bQ}, \cN^{\bullet}_{\bQ}) \] is an isomorphism. In particular, this holds for $\cA = \Mod(\bZ_{p, X})$ or $\cA = \Mod(W\cO_X)$.
\end{lem}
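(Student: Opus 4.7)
The plan is to reduce via triangulated devissage to the case of complexes concentrated in a single degree, and then identify the resulting $\Ext$-groups directly. First I would verify well-definedness: for any $M \in \cA$, the kernel and cokernel of multiplication by $n$ on $M$ are killed by $n$, hence zero in $\cA_\bQ$, so multiplication by $n$ is invertible on $M_\bQ$. Applied termwise, this shows that multiplication by $n$ is invertible on any complex in $D(\cA_\bQ)$, so $\Hom_{D(\cA_\bQ)}(\cM^\bullet_\bQ, \cN^\bullet_\bQ)$ is a $\bQ$-vector space and the natural map factors through $\otimes \bQ$.

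Both sides are cohomological bifunctors on $D^b(\cA) \times D^b(\cA)$: the left side by exactness of $\otimes \bQ$, the right side tautologically. Using the truncation triangles $\tau^{<k}\cM^\bullet \to \cM^\bullet \to \tau^{\geq k}\cM^\bullet$ on both variables together with the five-lemma, induction on amplitude reduces the claim to showing, for all $M, N \in \cA$ and $d \geq 0$,
\[
\Ext^d_\cA(M, N) \otimes \bQ \xrightarrow{\sim} \Ext^d_{\cA_\bQ}(M_\bQ, N_\bQ).
\]

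For $d = 0$, this follows from the standard description of $\Hom$ in a Serre quotient,
\[
\Hom_{\cA_\bQ}(M_\bQ, N_\bQ) = \colim \Hom_\cA(M', N/N'),
\]
with $M/M'$ and $N'$ torsion: the pairs $(nM, N[n])$, indexed by $n \geq 1$ ordered by divisibility, form a cofinal subsystem, and the transition maps, induced by multiplication by integers, identify the colimit with $\Hom_\cA(M, N) \otimes \bQ$. For $d \geq 1$, I would use the Yoneda description of $\Ext^d$ as equivalence classes of $d$-fold extensions together with exactness of $(-)_\bQ$: a $d$-fold extension in $\cA_\bQ$ is lifted term by term to a $d$-fold exact sequence in $\cA$, via the $d=0$ case applied iteratively, at the cost of multiplying by an integer; equivalences of extensions lift by the same mechanism.

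The main obstacle will be this Yoneda bookkeeping for $d \geq 1$: one must track denominators through $d$ successive lifts and verify that equivalences of $d$-fold extensions in $\cA_\bQ$ come from equivalences in $\cA$ up to integer multiples. Once the identification holds for all $d$, the final assertion about $\Mod(\bZ_{p,X})$ and $\Mod(W\cO_X)$ is immediate, as both are Grothendieck categories.
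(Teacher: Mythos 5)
Your reduction to objects concentrated in a single degree via truncation triangles is exactly one of the two options the paper itself mentions, and your $d=0$ argument is essentially a proof of the statement the paper simply cites for that case (R\"ulling--Chatzistamatiou, Proposition 3.7.4). One small imprecision there: the transition maps in the filtered colimit computing $\Hom$ in a Serre quotient are restriction and further quotienting, not multiplication by integers; the multiplications enter only when clearing denominators, i.e.\ a representative $g \colon nM \to N/N[k]$ is composed with $M \xrightarrow{n} nM$ and with $N/N[k] \xrightarrow{k} N$ to produce an honest morphism $M \to N$ that equals $nk \cdot g$ in the colimit, which is what gives surjectivity. Up to that point the proposal is sound.

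The genuine gap is the case $d \geq 1$. You propose a Yoneda-extension argument and yourself flag the ``bookkeeping'' as the main obstacle, but you never resolve it: lifting a $d$-fold extension in $\cA_{\bQ}$ to one in $\cA$ requires choosing representatives of every object and every morphism, pulling back and pushing out along maps with torsion kernel and cokernel at each of the $d$ stages, and one must then check that the equivalence relation on $d$-fold extensions (a zig-zag of morphisms of extensions) is respected up to integer multiples in both directions. This can presumably be pushed through, but it is not done here, and it is far harder than necessary. The missing idea, which is how the paper proceeds, is that injectives descend to the quotient: if $\cI \in \cA$ is injective, then $\Hom_{\cA_{\bQ}}(-, \cI_{\bQ}) \cong \Hom_{\cA}(-, \cI) \otimes \bQ$ by the $d=0$ case, and this functor is exact (every short exact sequence in $\cA_{\bQ}$ is isomorphic to the localization of one in $\cA$, and $\otimes\, \bQ$ is exact), so $\cI_{\bQ}$ is injective in $\cA_{\bQ}$. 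Since $(\cdot)_{\bQ}$ is exact, an injective resolution $\cN \to \cI^{\bullet}$ in $\cA$ localizes to an injective resolution of $\cN_{\bQ}$, and therefore
\[ \Ext^d_{\cA_{\bQ}}(\cM_{\bQ}, \cN_{\bQ}) = H^d\bigl(\Hom_{\cA_{\bQ}}(\cM_{\bQ}, \cI^{\bullet}_{\bQ})\bigr) = H^d\bigl(\Hom_{\cA}(\cM, \cI^{\bullet}) \otimes \bQ\bigr) = \Ext^d_{\cA}(\cM, \cN) \otimes \bQ, \]
the last equality again by exactness of $\otimes\, \bQ$. I would replace your Yoneda step by this two-line argument.
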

\begin{proof}
	Throughout, we will implicitly use that $(\cdot)_{\bQ}$ is exact (see \stacksproj{02MS}). Let $\cM, \cN \in \cA$ for now, we will show that the natural map $\Ext^i_{\cA}(\cM, \cN) \otimes \bQ \to \Ext^i_{\cA_{\bQ}}(\cM_{\bQ}, \cN_{\bQ})$ is an isomorphism. If $i = 0$, then this is \cite[Proposition 3.7.4]{Rulling_Chatzimatiaou_Hodge_Witt_cohomology_and_Witt_rational_singularities}. In general, note that if $\cI$ is an injective object in $\cA$, then $\cI_{\bQ}$ is again injective. Indeed, $\Hom_{\cA_{\bQ}}(-, \cI_{\bQ}) = \Hom_{\cA}(-, \cI) \otimes \bQ$, which is exact. In particular, the case of any $i \geq 0$ follows immediately from the case when $i = 0$.
	
	Now, the result for general bounded complexes follows from hypercohomology spectral sequences arguments (or equivalently, one can truncate both complexes further and further via the exact triangles in \stacksproj{08J5}, so that the result follows from the case of objects supported in a single degree).
\end{proof}

We now have all the tools to prove the main theorem of this section.

\begin{proof}[Proof of \autoref{thm_notions_of_singularities}]
	Throughout, we fix a quasi--resolution $\pi \colon Y \to X$. Let us start with the second statement (and hence we are in the case where $d \coloneqq \dim(X) \leq 3$). 
	
	The equivalence between $W\cO$--rationality and Witt--rationality is an immediate consequence of \autoref{GR_for_top_finite_quot}. Let us now show that Witt--pseudo--rationality implies $W\cO$--rationality. By definition, the maps $\pi_*W_n\omega_Y \to W_n\omega_X$ are surjective up to uniform $p$--power torsion. Note that they are also injective up to uniform $p$--power torsion, since the finite part of $Y \to X$ is purely inseparable. Furthermore, the maps $\pi_*W_n\omega_Y \to R\pi_*W_n\omega_Y$ are isomorphisms up to uniform $p$--power torsion by \autoref{GR_for_top_finite_quot}. Given that both $Y$ and $X$ are Witt--CM (see \autoref{top_fin_quot_is_Witt_CM}), we have proven that $R\pi_*W_n\omega_Y^{\bullet} \to W_n\omega_X^{\bullet}$ are isomorphisms up to uniform $p$--power torsion. By Grothendieck duality, this is also the case of $W_n\cO_X \to R\pi_*W_n\cO_Y$, so $X$ has $W\cO$--rational singularities by \autoref{Patakfalvi_Zdanowicz_criterion} and \autoref{enough_to_verify_easy_version}.
	
	Now, let us show that $W\cO$--rational singularities imply Witt--pseudo--rational singularities. By \autoref{enough_to_check_on_one_alteration}, it is enough to check the hypothesis for $\pi \colon Y \to X$. By $W\cO$--rationality and \autoref{Patakfalvi_Zdanowicz_criterion}, the cones of $W_n\cO_X \to R\pi_*W_n\cO_Y$ are of uniform $p$--power torsion. Applying Grothendieck duality gives us that the cones of \[ R\pi_*W_n\omega_Y^{\bullet} \to W_n\omega_X^{\bullet} \] are also of uniform $p$--power torsion. Since $Y$ is Witt--CM (\autoref{top_fin_quot_is_Witt_CM}), we automatically deduce by Witt--GR vanishing (\autoref{Witt_GR_threefolds}) that $X$ is Witt--CM and that the maps $\pi_*W_n\omega_Y \to W_n\omega_X$ are surjective up to uniform $p$--power torsion. \\
	
	Now, let us show the $\bQ_p$--versions of these statements. First, note that the very same argument as in the proof of \cite[Proposition 3.10]{Patakfalvi_Zdanowicz_Ordinary_varieties_with_trivial_canonical_bundle_are_not_uniruled} shows that $\bQ_p$--rationality of $X$ is equivalent to requiring the fact that the cones of $(\bZ/p^n\bZ)_X \to R\pi_*(\bZ/p^n\bZ)_Y$ are of uniform $p$--power torsion. By the compatibilities in the Riemann--Hilbert correspondence \cite[Theorem 3.2.3]{Baudin_Duality_between_Witt_Cartier_crystals_and_perverse_sheaves} (see also the original reference \cite[Section 9]{Bhatt_Lurie_RH_corr_pos_char}), this is in turn equivalent to the fact that the cones of $W_n\cO_X \to R\pi_*W_n\cO_Y$ are nilpotent up to uniform $p$--power torsion. With this in mind, the proof that having $\bQ_p$--pseudo--rationality is equivalent to $\bQ_p$--rationality is identical to the Witt vector case, replacing uses of Grothendieck duality by \cite[Theorem 5.2.7]{Baudin_Duality_between_Witt_Cartier_crystals_and_perverse_sheaves}.
	
	The fact that $\bQ_p$--rationality implies $BE$--$\bQ_p$--rationality is immediate: since the hypothesis of $BE$--$\bQ_p$--rationality is trivially satisfied on $\pi$, we are done by \autoref{enough_to_check_on_one_alteration}. Let us now conclude this proof by showing that $BE$--$\bQ_p$--rationality implies $\bQ_p$--pseudo--rationality, so assume the existence of a splitting $s \colon R\pi_*\bQ_{p, Y} \to \bQ_{p, X}$. By \autoref{derived_cat_nonsense}, there exists $g \colon R\pi_*\bZ_{p, Y} \to \bZ_{p, X}$ such that $r \cdot g_{\bQ} = s$ for some $r \in \bQ$. Consider the composition $\bZ_{p, X} \to R\pi_*\bZ_{p, Y} \xto{g} \bZ_{p, X}$. This corresponds to a global section $t \in \bZ_{p, X}$, such that $rt_{\bQ} = 1$. In particular, $t$ is automatically an element of $\bZ_{(p)}$. Applying $- \otimes^L_{\bZ_p} \bZ/p^n\bZ$  gives commutative diagrams \[ \begin{tikzcd}
		(\bZ/p^n\bZ)_X \arrow[rr, "nat"] \arrow[rrrr, "t"', bend right = 15] &  & R\pi_*(\bZ/p^n\bZ)_Y \arrow[rr, "g_n"] &  & (\bZ/p^n\bZ)_X.
	\end{tikzcd}\] Applying \cite[Theorem 5.2.7]{Baudin_Duality_between_Witt_Cartier_crystals_and_perverse_sheaves} and perfection (\autoref{Gabber_finiteness}), we obtain commutative diagrams 
	\[ \begin{tikzcd}
		W_n\omega_X^{\bullet, \perf} \arrow[rr] \arrow[rrrr, "t"', bend 	right = 15] &  & R\pi_*W_n\omega_Y^{\bullet, \perf} \arrow[rr, "\Tr"] &  & W_n\omega_X^{\bullet, \perf}.
	\end{tikzcd}\] Since $Y$ is Witt--CM by \autoref{top_fin_quot_is_Witt_CM}, we deduce from \autoref{GR_for_top_finite_quot} that $X$ is also Witt--CM. Furthermore, applying $\cH^{-\dim X}$ also shows that the maps $\pi_*W_n\omega_Y^{\perf} \to W_n\omega_X^{\perf}$ are surjective up to uniform $p$--power torsion. The proof is therefore complete, thanks to \autoref{enough_to_check_on_one_alteration}.
\end{proof}

\subsection{Witt--Cohen--Macaulayness and rationality of certain singularities}

\begin{cor}
	Let $X$ be a threefold of klt type over $k$. Then $X$ has Witt-CM singularities.
\end{cor}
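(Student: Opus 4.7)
The plan is to combine two ingredients, both already at our disposal. First, I would invoke the fact, recalled in the introduction and due to Hacon--Witaszek \cite{Hacon_Witaszek_On_the_relative_MMP_for_threefolds_in_low_char} (together with \cite{Baudin_Bernasconi_Kawakami_Frobenius_GR_fails}), that three--dimensional klt singularities are $W\cO$--rational in every residue characteristic. Since $W\cO$--rationality is a property of the underlying variety $X$ alone (no boundary is involved in its definition), the klt type hypothesis suffices: if there is some $\Delta \geq 0$ with $(X, \Delta)$ klt, then $X$ is $W\cO$--rational.

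Second, I would feed this into the equivalence established in the three--dimensional part of \autoref{thm_notions_of_singularities}, which asserts that for a threefold, being $W\cO$--rational is equivalent to being Witt--pseudo--rational. By \autoref{def_Witt-CM_and_so_on}, Witt--pseudo--rationality already includes Witt--Cohen--Macaulayness as part of its definition, so this immediately yields that $X$ is Witt--CM.

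The conceptual point is that \autoref{thm_notions_of_singularities} acts as the bridge between the condition of $W\cO$--rationality (proved for klt threefolds by MMP techniques) and the finer Cohen--Macaulay--type statement we are after, and the input which makes this bridge crossable in dimension three is precisely \autoref{Witt_GR_threefolds}, i.e.\ Witt--GR vanishing for threefolds, proven in this paper. Given this, no further obstacle should arise: the proof is a two--line deduction. If one wanted to make the reduction from ``klt type'' to ``klt'' fully explicit, one could simply note that the cited $W\cO$--rationality results apply to any threefold admitting a klt boundary, which is the very definition of klt type.
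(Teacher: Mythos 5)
Your proof is correct and follows essentially the same route as the paper: deduce $W\cO$--rationality of klt threefolds from the MMP results and then conclude via the three--dimensional equivalence in \autoref{thm_notions_of_singularities}, since Witt--pseudo--rationality contains Witt--CM by definition. The only detail the paper makes explicit that you elide is that \cite[Corollary 1.3]{Hacon_Witaszek_On_the_relative_MMP_for_threefolds_in_low_char} is stated under a $\bQ$--factoriality hypothesis, which is removed by appealing to the non--$\bQ$--factorial relative MMP of \cite{Kollar_Relative_MMP_without_Q_factoriality}.
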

\begin{proof}
	If $X$ is $\bQ$--factorial, then it has $W\cO$--rational singularities by \cite[Corollary 1.3]{Hacon_Witaszek_On_the_relative_MMP_for_threefolds_in_low_char}. In fact, the $\bQ$--factoriality assumption is not needed, since we can use the non--$\bQ$--factorial MMP from \cite{Kollar_Relative_MMP_without_Q_factoriality}. We then conclude the proof by \autoref{thm_notions_of_singularities}.
\end{proof}

To make sense of the second application to singularities, we will need the following definitions:

\begin{defn}\label{def_F_p_notions}
	Let $X$ be a variety.
	\begin{itemize}
		\item We say that $X$ has \emph{$\bF_p$--Cohen--Macaulay} (in short, $\bF_p$--CM) singularities if for all $i \neq \dim X$, $\cH^i(\omega_X^{\bullet})^{\perf} = 0$\footnote{In \cite{Baudin_Bernasconi_Kawakami_Frobenius_GR_fails}, we named this notion \emph{Cohen--Macaulayness up to nilpotence}. We decided to introduce this new name here because it fits better with the terminologies for $\bQ_p$--Cohen--Macaulayness and Witt--Cohen--Macaulayness}.
		\item If $X$ is irreducible, then we say that $X$ has \emph{$\bF_p$--rational singularities} if there exists a resolution of singularities $\pi \colon Y \to X$ such that for all $i > 0$, $R^i\pi_*\bF_{p, Y} = 0$.
	\end{itemize}
\end{defn}

\begin{prop}\label{nice_vanishing_Witt_ps_rat}
	Let $X$ be a variety with $\bQ_p$--rational singularities, and let $\pi \colon Y \to X$ be a quasi--resolution. Then $\pi_*W\omega_Y = W\omega_X$.
	
	If in addition $X$ has $\bF_p$--CM singularities, then also $R^1\pi_*W\omega_Y = 0$ and $\pi_*\omega_Y^{\perf} = \omega_X^{\perf}$.
\end{prop}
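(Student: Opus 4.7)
The plan is to leverage $\bQ_p$--rationality through the duality of \cite{Baudin_Duality_between_Witt_Cartier_crystals_and_perverse_sheaves} in order to obtain a derived identification $R\pi_*W\omega_Y^{\bullet} \simeq W\omega_X^{\bullet}$ up to uniform $p$--power torsion, and then to extract the sheaf--level statement by taking $\cH^{-\dim X}$, exploiting $\bQ_p$--Grauert--Riemenschneider vanishing for $\pi$ (\autoref{GR_for_top_finite_quot}) and the Witt--Cohen--Macaulay property of $Y$ (\autoref{top_fin_quot_is_Witt_CM}).

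For the first statement, I would start by noting that $\bQ_p$--rationality means that the cone of $(\bZ/p^n)_X \to R\pi_*(\bZ/p^n)_Y$ is uniformly killed by some $p^e$ (via a criterion analogous to \autoref{Patakfalvi_Zdanowicz_criterion}). Applying \cite[Theorem 5.2.7]{Baudin_Duality_between_Witt_Cartier_crystals_and_perverse_sheaves} together with perfection (\autoref{Gabber_finiteness}) then translates this into the statement that the cone of $R\pi_*W_n\omega_Y^{\bullet, \perf} \to W_n\omega_X^{\bullet, \perf}$ is uniformly $p^e$--killed. Taking $\Rlim_n$ and using \autoref{Witt_omega_coperfect} to identify $W\omega^{\bullet}$ with $\Rlim_n W_n\omega^{\bullet, \perf}$ yields that the cone of $R\pi_*W\omega_Y^{\bullet} \to W\omega_X^{\bullet}$ is $p^e$--killed. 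Taking $\cH^{-\dim X}$, the spectral sequence $R^p\pi_*\cH^q(W\omega_Y^{\bullet}) \Rightarrow \cH^{p+q}(R\pi_*W\omega_Y^{\bullet})$ combined with $\bQ_p$--GR and Witt--CM of $Y$ identifies $\cH^{-\dim X}(R\pi_*W\omega_Y^{\bullet})$ with $\pi_*W\omega_Y$ up to uniform $p$--power torsion, while $\cH^{-\dim X}(W\omega_X^{\bullet}) = W\omega_X$ follows directly from the fact that each $W_n\omega_X^{\bullet}$ vanishes in degrees below $-\dim X$. Hence $\pi_*W\omega_Y \to W\omega_X$ is an isomorphism up to uniform $p$--power torsion; injectivity is immediate from the $p$--torsion--freeness of $\pi_*W\omega_Y$ (\autoref{Witt_omega_p_torsion_free}), and surjectivity would be deduced via an $S_2$/reflexivity argument, using that the two sheaves agree on the open locus where $\pi$ is an isomorphism.

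For the second statement, the same strategy applied at level $n = 1$ together with the extra $\bF_p$--CM assumption provides the needed upgrade. Under $\bF_p$--CM, $\omega_X^{\bullet, \perf} = \omega_X^{\perf}[\dim X]$ strictly, and the duality--derived map $R\pi_*\omega_Y^{\bullet, \perf} \to \omega_X^{\bullet, \perf}$ is an isomorphism in the quasi--crystal category; since the kernel and cokernel of a map between perfect Cartier modules are again perfect and a locally nilpotent perfect module vanishes, this yields $\pi_*\omega_Y^{\perf} = \omega_X^{\perf}$ strictly. For $R^1\pi_*W\omega_Y = 0$, I would combine the cone--vanishing for $R\pi_*W\omega_Y^{\bullet} \to W\omega_X^{\bullet}$ with the long exact sequence of \autoref{cone_of_p_Witt_omega} applied to $W\omega_X^{\bullet}$ at $n = 1$: the $\bF_p$--CM hypothesis forces multiplication by $p$ to be surjective on $\cH^{-\dim X + 1}(W\omega_X^{\bullet})$, which combined with the uniform $p^e$--torsion coming from the cone--vanishing would force this cohomology to vanish, and therefore also $R^1\pi_*W\omega_Y = 0$. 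The principal obstacle in this plan is the final upgrade from ``iso/vanishing up to $p^e$--torsion'' to the strict version, which is exactly what the CM hypotheses enable for the second part.
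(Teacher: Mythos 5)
Your framework --- translating $\bQ_p$--rationality via the duality theorem into the statement that $R\pi_*W\omega_Y^{\bullet} \to W\omega_X^{\bullet}$ is an isomorphism up to uniform $p$--power torsion, then extracting $\cH^{-\dim X}$ --- matches the paper's, and you correctly isolate the real difficulty: upgrading ``isomorphism up to $p^e$--torsion'' to strict equality. But the mechanisms you propose for that upgrade do not work, and you never invoke the device the paper actually uses, namely reduction mod $p$. For the first statement, deferring surjectivity of $\pi_*W\omega_Y \to W\omega_X$ to ``an $S_2$/reflexivity argument'' is a genuine gap: no depth or reflexivity theory for $W\cO_X$--modules is developed or available, and nothing guarantees that a $p^e$--torsion quotient of $W\omega_X$ supported on a proper closed subset vanishes. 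The paper instead shows the cokernel $\cC$ of the injection $\pi_*W\omega_Y \hookrightarrow W\omega_X$ satisfies $\cC[p]=0$: both sheaves are $p$--torsion--free (\autoref{Witt_omega_p_torsion_free}), so applying $-\otimes^L_{\bZ}\bF_p$ together with \autoref{cone_of_p_Witt_omega} reduces the question to the injectivity of the classical trace $\pi_*\omega_Y^{\perf}\to\omega_X^{\perf}$, which holds unconditionally; since $\cC$ is $p^e$--torsion, $\cC[p]=0$ forces $\cC=0$. Note this requires no $\bF_p$--CM hypothesis, whereas your route does not clearly deliver the first statement in that generality.

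The second part of your argument contains two errors. First, $\bQ_p$--rationality controls the cones of $(\bZ/p^n)_X \to R\pi_*(\bZ/p^n)_Y$ only up to uniform $p$--power torsion, which at level $n=1$ is vacuous; so it does \emph{not} follow that $R\pi_*\omega_Y^{\bullet,\perf}\to\omega_X^{\bullet,\perf}$ is an isomorphism of quasi--crystals --- that would be $\bF_p$--rationality, which is not assumed. The paper instead obtains $\pi_*\omega_Y^{\perf}=\omega_X^{\perf}$ by reducing the already--established equality $\pi_*W\omega_Y=W\omega_X$ modulo $p$ and using that $\bF_p$--CM gives $W\omega_X/p=\omega_X^{\perf}$. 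Second, your argument for $R^1\pi_*W\omega_Y=0$ is aimed at the wrong object: under $\bF_p$--CM the group $\cH^{-\dim X+1}(W\omega_X^{\bullet})$ vanishes outright, and what must be controlled is $\cH^{-\dim X+1}(R\pi_*W\omega_Y^{\bullet})\supseteq R^1\pi_*W\omega_Y$. Moreover, surjectivity of multiplication by $p$ on $R^1\pi_*W\omega_Y$ would require $R^1\pi_*\omega_Y^{\perf}=0$, i.e. $\bF_p$--GR vanishing, which fails in general. The paper again runs the mod--$p$ trick on the cone $\cT$ of $R\pi_*W\omega_Y^{\bullet}\to W\omega_X^{\bullet}$, showing $\cH^{-\dim X}(\cT)[p]=0$ via injectivity of the trace on perfections and concluding from $p^e$--torsion of $\cT$.
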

\begin{proof}
	Since $X$ has $\bQ_p$--pseudo--rational singularities by \autoref{thm_notions_of_singularities}, we deduce that the cokernel $\cC$ of the trace map $\pi_*W\omega_Y \to W\omega_X$ is of $p^e$--torsion for some $e > 0$. Note that this trace map is actually injective. Indeed, let 
	\[ \begin{tikzcd}
		Y \arrow[r, "f"] \arrow[rr, "\pi"', bend right] & Z \arrow[r, "g"] & X
	\end{tikzcd} \] denote the Stein factorization of $\pi$. Surely, $f_*W\omega_Y \to W\omega_Z$ is injective (since $f_*\omega_Y \to \omega_Z$ is). Furthermore, since $g$ is purely inseparable, there is a factorization 
	\[ \begin{tikzcd}
		X \arrow[r, "h"] \arrow[rr, "F^s"', bend right] & Z \arrow[r, "g"] & X,
	\end{tikzcd} \] so we also have a factorization $F^s_*g_*W\omega_Z \to F^s_*W\omega_X \to g_*W\omega_Z \to W\omega_X$. Since both the trace maps of $W\omega_X$ and $W\omega_Z$ are isomorphisms by \autoref{Witt_omega_coperfect}, we deduce that $g_*W\omega_Z = W\omega_X$. In particular, we have proven our sought injectivity, so we have a short exact sequence \[\begin{tikzcd}
	0 \arrow[rr] &  & \pi_*W\omega_Y \arrow[rr, "\Tr"] &  & W\omega_X \arrow[rr] &  & \cC \arrow[rr] &  & 0.
	\end{tikzcd} \] Applying $- \otimes^L_{\bZ} \bF_p$ gives an exact sequence \[ 0 \to \pi_*W\omega_Y[p] \to W\omega_X[p] \to \cC[p] \to (\pi_*W\omega_Y)/p  \to (W\omega_X)/p \to \cC/p \to 0. \]
	By \autoref{Witt_omega_p_torsion_free}, both $\pi_*W\omega_Y$ and $W\omega_X$ are $p$--torsion--free, so the first two terms of this sequence vanish. Using the sequence from \autoref{cone_of_p_Witt_omega} on both $X$ and $Y$, we obtain a commutative diagram 
	\begin{equation}\label{cool_square}
		\begin{tikzcd}
			(\pi_*W\omega_Y)/p \arrow[r] \arrow[d, hook] & W\omega_X/p \arrow[d, hook] \\
			\pi_*\omega_Y^{\perf} \arrow[r]              & \omega_X^{\perf}.
		\end{tikzcd} 
	\end{equation} 
	Since the trace map $\pi_*\omega_Y \to \omega_{X_{\red}}$ is injective and $\omega_{X_{\red}} \to \omega_X$ is an isomorphism up to nilpotence, we deduce that $\pi_*\omega_Y^{\perf} \to \omega_X^{\perf}$ is in fact injective. The top arrow in the diagram above must then be injective too, so $\cC[p] = 0$. Since $\cC$ is $p^e$--torsion, this forces $\cC = 0$, so $\pi_*W\omega_Y = W\omega_X$. \\
	
	Now, assume furthermore that $X$ has $\bF_p$--CM singularities. Since $\omega_X^{\bullet, \perf} = \omega_X^{\perf}[\dim X]$ by assumption, we deduce from \autoref{Witt_omega_coperfect} and \autoref{Witt_omega_Mittag_Leffler} that $W\omega_X^{\bullet} = W\omega_X[\dim X]$. Then $W\omega_X/p = \omega_X^{\perf}$ by \autoref{cone_of_p_Witt_omega}, so given that $\pi_*W\omega_Y = W\omega_X$, we deduce by taking modulo $p$ and \autoref{cool_square} that $\pi_*\omega_Y^{\perf} = \omega_X^{\perf}$. We are left to show the vanishing $R^1\pi_*W\omega_Y = 0$. We will play a similar game as in the first part of the proof: consider the exact triangle 
	\[ \begin{tikzcd}
		R\pi_*W\omega_Y^{\bullet} \arrow[rr, "\Tr"] &  & W\omega_X^{\bullet} \arrow[rr] &  & \cT \arrow[rr, "+1"] &  & {}
	\end{tikzcd} \]
	By $\bQ_p$--GR vanishing (\autoref{GR_for_top_finite_quot}) and $\bQ_p$--Cohen--Macaulayness of both $X$ and $Y$ (see \autoref{top_fin_quot_is_Witt_CM}), we deduce that $\cT$ is $p^e$--torsion for some $e > 0$.	Given that $\pi_*W\omega_Y = W\omega_X$ and that $W\omega_X^{\bullet}$ is concentrated in degree $-\dim X$, we deduce that $\cT \in D^{\geq - \dim X}$ and that $\cH^{-\dim X}(\cT) = \cH^{- \dim X  + 1}(R\pi_*W\omega_Y^{\bullet})$. Given that $R^1\pi_*W\omega_Y \inj \cH^{- \dim X  + 1}(R\pi_*W\omega_Y^{\bullet})$, it is enough to show that $\cH^{-\dim X}(\cT) = 0$ in order to obtain the vanishing we want. 
	
	Taking $- \otimes^L_{\bZ} \bF_p$ gives the exact triangle 
	\[ \begin{tikzcd}
		{R\pi_*\omega_Y^{\bullet, \perf}} \arrow[rr, "\Tr"] &  & {\omega_X^{\bullet, \perf}} \arrow[rr] &  & \cT \otimes^L_{\bZ} \bF_p \arrow[rr, "+1"] &  & {}
	\end{tikzcd} \] 
	so the first few terms of the associated long exact sequence give \[ \begin{tikzcd}
		0 \arrow[rr] &  & {\cH^{-\dim X}(\cT)[p]} \arrow[rr] &  & {\pi_*\omega_Y^{\perf}} \arrow[rr, "\Tr"] &  & {\omega_X^{\perf}} \arrow[rr] &  & \dots.
	\end{tikzcd} \]
	Again, since the trace map is injective, this shows that $\cH^{-\dim X}(\cT)[p] = 0$, so also $\cH^{-\dim X}(\cT) = 0$ since this module is $p^e$--torsion.
\end{proof}

For the definition of quasi--$F$--rational singularities and its properties, we refer the reader to \cite{Quasi_F_splittings_III}. In particular, a variety with strongly $F$--regular or $F$--rational singularities also has quasi--$F$--rational singularities.

\begin{thm}
	Let $X$ be a variety with quasi--$F$--rational singularities. Then $X$ has $\bQ_p$--rational singularities. If furthermore $\dim(X) \leq 3$, then $X$ has both Witt--rational singularities and $\bF_p$--rational singularities.
\end{thm}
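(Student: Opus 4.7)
The plan is to apply the characterization theorem \autoref{thm_notions_of_singularities} to reduce all three claims to verifying pseudo--rationality--type conditions on a single alteration. For the $\bQ_p$--rationality statement, by \autoref{enough_to_check_on_one_alteration} it suffices to fix a quasi--resolution $\pi \colon Y \to X$ and show that $X$ is $\bQ_p$--Cohen--Macaulay and that the cokernel of the trace $\pi_* W_n \omega_Y^{\perf} \to W_n \omega_X^{\perf}$ is of uniform $p$--power torsion. The definition of quasi--$F$--rational singularities from \cite{Quasi_F_splittings_III} provides, at some fixed Witt level $n_0$, a trace--type splitting of the Cartier operator on $\omega_X$; passing to perfection, this should yield both the $\bQ_p$--CM property and the desired surjectivity of the trace after perfection, while injectivity of the trace comes from the Stein--factorization argument in the proof of \autoref{nice_vanishing_Witt_ps_rat}.

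When $\dim(X) \leq 3$, I would upgrade to Witt--rationality via \autoref{Witt_GR_threefolds}: unconditional Witt--GR vanishing for threefolds gives that all higher direct images $R^i \pi_* W_n \omega_Y$ (not merely their perfections) are of uniform $p$--power torsion. Combined with the perfection--level surjectivity of the trace established above, a finite--length argument exploiting \autoref{crystals_have_finite_length} together with the exactness of perfection (\autoref{Gabber_finiteness}) lifts nilpotence control to uniform torsion control at the $W_n\omega_Y$ level, yielding Witt--pseudo--rationality and hence Witt--rationality by \autoref{thm_notions_of_singularities}.

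For $\bF_p$--rationality in dimension $\leq 3$, I would apply \autoref{nice_vanishing_Witt_ps_rat} (whose $\bF_p$--CM hypothesis follows from Witt--CM and \autoref{cone_of_p_Witt_omega}) to obtain $\pi_* \omega_Y^{\perf} = \omega_X^{\perf}$ and $R^i \pi_* W \omega_Y = 0$ for $i > 0$. Applying the $\Sol$ functor to these vanishings, in the spirit of the proof of \autoref{motivic_vanishing}, converts them into vanishing of $R^i \pi_* W_n \omega_{Y, \log}$ up to uniform $p$--power torsion; finally, passing to the $\bF_p$--level via the Artin--Schreier / Riemann--Hilbert correspondence machinery developed in \cite{Baudin_Duality_between_Witt_Cartier_crystals_and_perverse_sheaves} yields $R^i \pi_* \bF_{p,Y} = 0$ for $i > 0$.

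The main obstacle will be the first step: precisely extracting from the definition of quasi--$F$--rationality the trace--splitting data needed to conclude both $\bQ_p$--Cohen--Macaulayness and trace surjectivity up to uniform $p$--power torsion, rather than only up to Cartier--nilpotence. The interplay of quasi--$F$--rationality with the perfection functor and the finite--length properties of Witt--Cartier crystals will be the key technical ingredients for this conversion.
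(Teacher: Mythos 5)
There is a genuine gap, and it sits exactly where you flag ``the main obstacle'': the step you defer is the entire content of the theorem, and the surrounding architecture you build around it does not survive without it. The paper's actual argument does not extract any ``trace--type splitting of the Cartier operator'' from quasi--$F$--rationality (that is not what the definition provides). Instead it uses the colimit characterization $\colim \tau(W_n\omega_X) = \colim W_n\omega_X$ from \cite[Proposition 3.34.(2)]{Quasi_F_splittings_III}: given $s \in W_n\omega_X$, one finds $m \geq n$ and $t \in \pi_*W_m\omega_Y$ hitting the image of $s$ in $W_m\omega_X$, and then a diagram chase using the exact sequences $0 \to W_n\omega \to W_m\omega \to F_*^{n-1}W_{m-n}\omega$ together with a \emph{uniform} bound $p^e \cdot \ker(\pi_*W_n\omega_Y \to W_n\omega_X) = 0$ (coming from the purely inseparable part of the Stein factorization and normality of $X$) forces $p^e s$ into the image of $\pi_*W_n\omega_Y$. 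This directly yields Witt--pseudo--rationality at \emph{finite level}, in \emph{all} dimensions, which is what \autoref{thm_notions_of_singularities} then converts into all the claims except $\bF_p$--rationality. Without carrying out this step your proof establishes nothing.

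Your second paragraph compounds the problem by running the implication in the wrong direction: you propose to first obtain perfection--level (crystal--level) control and then ``lift'' it to uniform torsion of the sheaves $R^i\pi_*W_n\omega_Y$ and of $\coker(\pi_*W_n\omega_Y \to W_n\omega_X)$ via finite length of crystals and exactness of perfection. Perfection kills nilpotent Cartier modules, so a statement about $(\cdot)^{\perf}$ only controls these objects up to nil--isomorphism; a coherent sheaf with nilpotent structural map can be very far from $p$--power torsion, and no finite--length argument recovers the finite--level statement (this is precisely the asymmetry emphasized in the paper between $W\omega$ and $W_n\omega$, and why $\bQ_p$--rationality and Witt--rationality are kept as separate equivalence classes in \autoref{thm_notions_of_singularities}). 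The paper avoids this entirely by proving the finite--level statement first. A further minor error: $\bF_p$--Cohen--Macaulayness of $X$ does not follow from Witt--CM plus \autoref{cone_of_p_Witt_omega} (uniform $p$--power torsion is vacuous for the $p$--torsion sheaves $\cH^i(\omega_X^{\bullet})$, and a nonzero $p^e$--torsion module has nonzero reduction mod $p$); it holds simply because quasi--$F$--rational varieties are Cohen--Macaulay by definition. Your outline for the $\bF_p$--rationality step is otherwise in the right spirit, though the paper argues more directly: from $R^1\pi_*W\omega_Y = 0$, $R^2\pi_*\omega_Y = 0$ and $\pi_*\omega_Y^{\perf} = \omega_X^{\perf}$ one gets that $R\pi_*\omega_Y \to \omega_X$ is an isomorphism of Cartier crystals, and then applies the Riemann--Hilbert correspondence once, with no detour through $W_n\omega_{Y,\log}$.
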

\begin{proof}
	We will start by showing that $X$ has Witt--pseudo--rational singularities in any dimension. Since Witt--pseudo--rational singularities are $\bQ_p$--pseudo--rational by definition, this will show that $X$ has $\bQ_p$--rational singularities by \autoref{thm_notions_of_singularities}. This will also show that $X$ has Witt--rational singularities when $\dim(X) \leq 3$ by \emph{loc. cit.}.
	
	Since $X$ is CM by definition, it is immediate to see that it is Witt--CM. Let $\pi \colon Y \to X$ be a quasi--resolution. By \autoref{enough_to_check_on_one_alteration}, it is enough to show that the maps $\pi_*W_n\omega_Y \to W_n\omega_X$ are surjective up to uniform $p$--power torsion. Throughout, we will use notations from \cite{Quasi_F_splittings_III}.
	
	First of all, note that $X$ is integral by definition of quasi--$F$--rationality, and it is in fact normal by applying \cite[Proposition 3.11.(3)]{Quasi_F_splittings_III} to the normalization morphism. Now, let $Y \xto{g} Z \xto{f} X$ denote the Stein factorization of $\pi$. Then each map $g_*W_n\omega_Y \to W_n\omega_Z$ is injective. By normality of $X$ and the fact that $Z \to X$ is purely inseparable, we deduce that for some $e > 0$, $\cO_Z^{p^e} \inc \cO_X$. This shows that the kernel of $f_*W_n\omega_Z \to W_n\omega_X$ is killed by the $e$'th power of the Cartier operator. Combining both observations, we deduce that the kernel of $\pi_*W_n\omega_Y \to W_n\omega_X$ is $p^e$--torsion for any $n \geq 1$.
	
	Now, fix $n \geq 1$, and let $s \in W_n\omega_X$. Since \[ \colim \tau(W_n\omega_X) = \colim W_n\omega_X \] by \cite[Proposition 3.34.(2)]{Quasi_F_splittings_III}, there exists $m \geq n$ and $t \in \pi_*W_m\omega_Y$ which is sent to the image of $s$ in $W_m\omega_X$ (recall that $W_n\omega_X \inc W_m\omega_X$).  Consider the diagram 
	\[ \begin{tikzcd}
		0 \arrow[r] & \pi_*W_n\omega_Y \arrow[r] \arrow[d] & \pi_*W_m\omega_Y \arrow[r] \arrow[d] & F^{n - 1}_*\pi_*W_{m - n}\omega_Y \arrow[d] \\
		0 \arrow[r] & W_n\omega_X \arrow[r]                & W_m\omega_X \arrow[r]                & F^{n - 1}_*W_{m - n}\omega_X               
	\end{tikzcd}\] with exact rows. Then $t$ is sent to the kernel of $F^{n - 1}_*\pi_*W_{m - n}\omega_Y \to F^{n - 1}_*W_{n - m}\omega_X$, so by the previous paragraph, $p^et \in \pi_*W_n\omega_Y$, whence $p^es \in \im(\pi_*W_n\omega_Y \to W_n\omega_X)$. In other words, we have proven that the cokernel of $\pi_*W_n\omega_Y \to W_n\omega_X$ is $p^e$--torsion, so $X$ indeed has Witt--pseudo--rational singularities.
	
	We are now left to show the $\bF_p$--rationality statement, so assume that $\dim(X) = 3$, and let $\pi \colon Y \to X$ be a resolution of singularities. By \autoref{nice_vanishing_Witt_ps_rat}, we know that $R^1\pi_*W\omega_Y = 0$. Since \autoref{cone_of_p_Witt_omega} gives a short exact sequence \[ 0 \to W\omega_Y \to W\omega_Y \to \omega_Y^{\perf} \to 0 \] and $R^2\pi_*\omega_Y = 0$ by \autoref{GR_surfaces}, we deduce that $R^1\pi_*\omega_Y^{\perf} = 0$. Since $\pi_*\omega_Y^{\perf} = \omega_X^{\perf}$ (see \autoref{nice_vanishing_Witt_ps_rat}), we deduce that the trace map $R\pi_*\omega_Y \to \omega_X$ is an isomorphism in $D^b(\Crys_X^C)$. By Grothendieck duality for crystals (\cite[Theorem 5.2.7]{Baudin_Duality_between_perverse_sheaves_and_Cartier_crystals}), we conclude that the map $\bF_{p, X} \to R\pi_*\bF_{p, Y}$ is an isomorphism. In other words, $X$ has $\bF_p$--rational singularities.
\end{proof}

\bibliographystyle{alpha}
\bibliography{bibliography}

@book {Hartshorne_Algebraic_Geometry,
    AUTHOR = {Hartshorne, R.},
     TITLE = {Algebraic geometry},
      NOTE = {Graduate Texts in Mathematics, No. 52},
 PUBLISHER = {Springer-Verlag, New York-Heidelberg},
      YEAR = {1977},
     PAGES = {xvi+496},
      ISBN = {0-387-90244-9},
   MRCLASS = {14-01},
  MRNUMBER = {0463157},
MRREVIEWER = {Robert Speiser},
}

@book {Kollar_Mori_Birational_geometry_of_algebraic_varieties,
    AUTHOR = {Koll{\'a}r, J. and Mori, S.},
     TITLE = {Birational geometry of algebraic varieties},
    SERIES = {Cambridge Tracts in Mathematics},
    VOLUME = {134},
      NOTE = {With the collaboration of C. H. Clemens and A. Corti,
              Translated from the 1998 Japanese original},
 PUBLISHER = {Cambridge University Press},
   ADDRESS = {Cambridge},
      YEAR = {1998},
     PAGES = {viii+254},
      ISBN = {0-521-63277-3},
   MRCLASS = {14E30},
  MRNUMBER = {MR1658959 (2000b:14018)},
MRREVIEWER = {Mark Gross},
}

@article{Bhatt&Co_Applications_of_perverse_sheaves_in_commutative_algebra,
	TITLE={Applications of perverse sheaves in commutative algebra},
	AUTHOR={Bhatt, B. and Blickle, M. and Lyubeznik, G. and Singh, A. K. and Zhang, W.},
	YEAR={2023},
	note = {Available at \href{https://arxiv.org/abs/2308.03155}{arXiv:2308.03155v1}}
}

@article{Ishii_Yoshida_On_vanishing_of_higher_direct_images_of_the_structure_sheaf,
	TITLE={On vanishing of higher direct images of the structure sheaf},
	AUTHOR={Ishii, S. and Yoshida, K.},
	YEAR={2024},
	note = {Available at \href{https://arxiv.org/abs/2410.15282}{arXiv:2410.15282v3}}
}

@article{Rulling_Ren_Duality_for_Hodge_Witt_cohomology_with_modulus,
	TITLE={Duality for {H}odge-{W}itt cohomology with modulus},
	AUTHOR={Ren, F. and R{ü}lling, K.},
	YEAR={2024},
	note = {Available at \href{https://arxiv.org/abs/2403.18763}{arXiv:2403.18763v1}}
}

@article {Kato_Duality_theories_for_p_primary_etale_coh_II,
	AUTHOR = {Kato, K.},
	TITLE = {Duality theories for {$p$}-primary etale cohomology. {II}},
	JOURNAL = {Compositio Math.},
	FJOURNAL = {Compositio Mathematica},
	VOLUME = {63},
	YEAR = {1987},
	NUMBER = {2},
	PAGES = {259--270},
	ISSN = {0010-437X},
	MRCLASS = {14F30 (11G25)},
	MRNUMBER = {906374},
	MRREVIEWER = {Wayne Raskind},
	URL = {http://www.numdam.org/item?id=CM_1987__63_2_259_0},
}

@article {Geisser_Levine_The_K_theory_of_fields_in_char_p,
	AUTHOR = {Geisser, T. and Levine, M.},
	TITLE = {The {$K$}-theory of fields in characteristic {$p$}},
	JOURNAL = {Invent. Math.},
	FJOURNAL = {Inventiones Mathematicae},
	VOLUME = {139},
	YEAR = {2000},
	NUMBER = {3},
	PAGES = {459--493},
	ISSN = {0020-9910},
	MRCLASS = {19D50 (14C15 19D45 19E08)},
	MRNUMBER = {1738056},
	MRREVIEWER = {Michel Gros},
	DOI = {10.1007/s002220050014},
	URL = {https://doi.org/10.1007/s002220050014},
}

@article {Matsuue_On_relative_and_overconvergent_de_Rhamm_Witt_cohomology_for_log_schemes,
	AUTHOR = {Matsuue, H.},
	TITLE = {On relative and overconvergent de {R}ham--{W}itt cohomology
	for log schemes},
	JOURNAL = {Math. Z.},
	FJOURNAL = {Mathematische Zeitschrift},
	VOLUME = {286},
	YEAR = {2017},
	NUMBER = {1-2},
	PAGES = {19--87},
	ISSN = {0025-5874},
	MRCLASS = {14F30 (55T05)},
	MRNUMBER = {3648492},
	MRREVIEWER = {Piotr Achinger},
	DOI = {10.1007/s00209-016-1755-1},
	URL = {https://doi.org/10.1007/s00209-016-1755-1},
}

@article {Arvidsson_Bernasconi_Lacini_KVV_for_log_dP_surfaces_in_pos_char,
	AUTHOR = {Arvidsson, E. and Bernasconi, F. and Lacini, J.},
	TITLE = {On the {K}awamata-{V}iehweg vanishing theorem for log del
	{P}ezzo surfaces in positive characteristic},
	JOURNAL = {Compos. Math.},
	FJOURNAL = {Compositio Mathematica},
	VOLUME = {158},
	YEAR = {2022},
	NUMBER = {4},
	PAGES = {750--763},
	ISSN = {0010-437X},
	MRCLASS = {14E30 (14F17 14G17 14J17)},
	MRNUMBER = {4438290},
	MRREVIEWER = {Ana Bravo},
	DOI = {10.1112/S0010437X22007394},
	URL = {https://doi.org/10.1112/S0010437X22007394},
}

@article{Bernasconi_Kollar_vanishing_theorems_for_threefolds_in_char_>5,
	AUTHOR = {Bernasconi, F. and Koll\'{a}r, J.},
	TITLE = {Vanishing theorems for three-folds in characteristic
	{$p>5$}},
	JOURNAL = {Int. Math. Res. Not. IMRN},
	FJOURNAL = {International Mathematics Research Notices. IMRN},
	YEAR = {2023},
	NUMBER = {4},
	PAGES = {2846--2866},
	ISSN = {1073-7928},
	MRCLASS = {14F17 (14G17 14J30)},
	MRNUMBER = {4565629},
	DOI = {10.1093/imrn/rnab316},
	URL = {https://doi.org/10.1093/imrn/rnab316},
}

@article{Hacon_Witaszek_On_the_relative_MMP_for_threefolds_in_low_char,
	AUTHOR = {Hacon, C. D. and Witaszek, J.},
	TITLE = {On the relative minimal model program for threefolds in low
	characteristics},
	JOURNAL = {Peking Math. J.},
	FJOURNAL = {Peking Mathematical Journal},
	VOLUME = {5},
	YEAR = {2022},
	NUMBER = {2},
	PAGES = {365--382},
	ISSN = {2096-6075},
	MRCLASS = {14E30 (13A35 14J17)},
	MRNUMBER = {4492657},
	DOI = {10.1007/s42543-021-00037-7},
	URL = {https://doi.org/10.1007/s42543-021-00037-7},
}

@book {Milne_Etale_Cohomology,
    AUTHOR = {Milne, J. S.},
     TITLE = {\'{E}tale cohomology},
    SERIES = {Princeton Mathematical Series, No. 33},
 PUBLISHER = {Princeton University Press, Princeton, N.J.},
      YEAR = {1980},
     PAGES = {xiii+323},
      ISBN = {0-691-08238-3},
   MRCLASS = {14-02 (14F20 18F99)},
  MRNUMBER = {559531},
MRREVIEWER = {G. Horrocks},
}

@article {Blickle_Bockle_Cartier_modules_finiteness_results,
    AUTHOR = {Blickle, M. and B\"{o}ckle, G.},
     TITLE = {Cartier modules: finiteness results},
   JOURNAL = {J. Reine Angew. Math.},
  FJOURNAL = {Journal f\"{u}r die Reine und Angewandte Mathematik. [Crelle's
              Journal]},
    VOLUME = {661},
      YEAR = {2011},
     PAGES = {85--123},
      ISSN = {0075-4102},
   MRCLASS = {14F05},
  MRNUMBER = {2863904},
MRREVIEWER = {Nicolas Perrin},
       DOI = {10.1515/CRELLE.2011.087},
       URL = {https://doi.org/10.1515/CRELLE.2011.087},
}

@article{Baudin_Duality_between_Witt_Cartier_crystals_and_perverse_sheaves,
	TITLE = {Duality between {$W_n$}-{C}artier crystals and perverse {$\mathbb{Z}/p^n\mathbb{Z}$}-sheaves},
	AUTHOR = {Baudin, J.},
	YEAR = {2025},
	note = {Available at \href{https://arxiv.org/abs/2506.13346}{arXiv:2506.13346v1}}
}

@article{Baudin_On_the_Euler_characteristic_of_weakly_ordinary_varieties_of_maximal_albanese_dimension,
	TITLE = {On the {E}uler characteristic of weakly ordinary varieties of maximal {A}lbanese dimension},
	AUTHOR = {Baudin, J.},
	YEAR = {2025},
	note = {Available at \href{https://arxiv.org/abs/2507.01797}{arXiv:2507.01797v1}}
}

@article{Baudin_Duality_between_perverse_sheaves_and_Cartier_crystals,
	TITLE = {Duality between {C}artier crystals and perverse $\mathbb{F}_p$-sheaves, and application to generic vanishing},
    AUTHOR = {Baudin, J.},
      YEAR = {2023},
      note = {Available at \href{https://arxiv.org/abs/2306.05378}{arXiv:2306.05378v2}}
}

@article{Baudin_Bernasconi_Kawakami_Frobenius_GR_fails,
	TITLE = {The {F}robenius--stable version of the {G}rauert--{R}iemenschneider vanishing theorem fails},
	AUTHOR = {Baudin, J. and Bernasconi, F. and Kawakami, T.},
	YEAR = {2023},
	JOURNAL={{T}o appear in {J}ournal of {A}lgebraic {G}eometry},
	note = {Available at \href{https://arxiv.org/abs/2312.13456}{arXiv:2312.13456v5}}
}

@article {Mumford_Pathologies_of_modular_alg_surfaces,
	AUTHOR = {Mumford, D.},
	TITLE = {Pathologies of modular algebraic surfaces},
	JOURNAL = {Amer. J. Math.},
	FJOURNAL = {American Journal of Mathematics},
	VOLUME = {83},
	YEAR = {1961},
	PAGES = {339--342},
	ISSN = {0002-9327},
	MRCLASS = {14.20},
	MRNUMBER = {124328},
	MRREVIEWER = {M. Rosenlicht},
	DOI = {10.2307/2372959},
	URL = {https://doi.org/10.2307/2372959},
}

@incollection {Beilinson_Bernstein_Deligne_Faisceaux_Pervers,
	AUTHOR = {Be\u{\i}linson, A. A. and Bernstein, J. and Deligne, P.},
	TITLE = {Faisceaux pervers},
	BOOKTITLE = {Analysis and topology on singular spaces, {I} ({L}uminy,
	1981)},
	SERIES = {Ast\'{e}risque},
	VOLUME = {100},
	PAGES = {5--171},
	PUBLISHER = {Soc. Math. France, Paris},
	YEAR = {1982},
	MRCLASS = {32C38},
	MRNUMBER = {751966},
	MRREVIEWER = {Zoghman Mebkhout},
}

@article {Hacon_Pat_GV_Characterization_Ordinary_AV,
    AUTHOR = {Hacon, C. D. and Patakfalvi, Zs.},
     TITLE = {Generic vanishing in characteristic {$p>0$} and the
              characterization of ordinary abelian varieties},
   JOURNAL = {Amer. J. Math.},
  FJOURNAL = {American Journal of Mathematics},
    VOLUME = {138},
      YEAR = {2016},
    NUMBER = {4},
     PAGES = {963--998},
      ISSN = {0002-9327},
   MRCLASS = {14F17 (14G17 14K05)},
  MRNUMBER = {3538148},
MRREVIEWER = {Thomas Kr\"{a}mer},
       DOI = {10.1353/ajm.2016.0031},
       URL = {https://doi.org/10.1353/ajm.2016.0031},
}

@article {Hacon_Pat_GV_Geom_Theta_Divs,
    AUTHOR = {Hacon, C. D. and Patakfalvi, Zs.},
     TITLE = {Generic vanishing in characteristic {$p>0$} and the
              geometry of theta divisors},
   JOURNAL = {Boll. Unione Mat. Ital.},
  FJOURNAL = {Bollettino dell'Unione Matematica Italiana},
    VOLUME = {15},
      YEAR = {2022},
    NUMBER = {1-2},
     PAGES = {215--244},
      ISSN = {1972-6724},
   MRCLASS = {14F17 (14G17 14K05)},
  MRNUMBER = {4390550},
       DOI = {10.1007/s40574-021-00304-6},
       URL = {https://doi.org/10.1007/s40574-021-00304-6},
}

@article {Rulling_Chatzistamatiou_GR_for_smooth,
    AUTHOR = {Chatzistamatiou, A. and R\"{u}lling, K.},
     TITLE = {Vanishing of the higher direct images of the structure sheaf},
   JOURNAL = {Compos. Math.},
  FJOURNAL = {Compositio Mathematica},
    VOLUME = {151},
      YEAR = {2015},
    NUMBER = {11},
     PAGES = {2131--2144},
      ISSN = {0010-437X},
   MRCLASS = {14E05 (14F17)},
  MRNUMBER = {3427575},
MRREVIEWER = {Juan Rafael Sendra},
       DOI = {10.1112/S0010437X15007435},
       URL = {https://doi.org/10.1112/S0010437X15007435},
}

@article {Rulling_Chatzimatiaou_Hodge_Witt_cohomology_and_Witt_rational_singularities,
	AUTHOR = {Chatzistamatiou, A. and R\"{u}lling, K.},
	TITLE = {Hodge-{W}itt cohomology and {W}itt-rational singularities},
	JOURNAL = {Doc. Math.},
	FJOURNAL = {Documenta Mathematica},
	VOLUME = {17},
	YEAR = {2012},
	PAGES = {663--781},
	ISSN = {1431-0635},
	MRCLASS = {14F43 (13F35 14B05)},
	MRNUMBER = {3001634},
	MRREVIEWER = {Markus Szymik},
}

@incollection {Hacon_Kovacs_GV_fails_in_pos_char,
    AUTHOR = {Hacon, C. D. and Kov\'{a}cs, S. J.},
     TITLE = {Generic vanishing fails for singular varieties and in
              characteristic {$p>0$}},
 BOOKTITLE = {Recent advances in algebraic geometry},
    SERIES = {London Math. Soc. Lecture Note Ser.},
    VOLUME = {417},
     PAGES = {240--253},
 PUBLISHER = {Cambridge Univ. Press, Cambridge},
      YEAR = {2015},
   MRCLASS = {14F17 (14G17)},
  MRNUMBER = {3380452},
MRREVIEWER = {Yukihide Takayama},
}

@article {Bhatt_Derived_Splinters_In_Positive_Characteristic,
    AUTHOR = {Bhatt, B.},
     TITLE = {Derived splinters in positive characteristic},
   JOURNAL = {Compos. Math.},
  FJOURNAL = {Compositio Mathematica},
    VOLUME = {148},
      YEAR = {2012},
    NUMBER = {6},
     PAGES = {1757--1786},
      ISSN = {0010-437X},
   MRCLASS = {14F05 (14F17 14G15)},
  MRNUMBER = {2999303},
MRREVIEWER = {Scott R. Nollet},
       DOI = {10.1112/S0010437X12000309},
       URL = {https://doi.org/10.1112/S0010437X12000309},
}

@article {Bhatt_Lurie_RH_corr_pos_char,
	AUTHOR = {Bhatt, B. and Lurie, J.},
	TITLE = {A {R}iemann-{H}ilbert correspondence in positive
	characteristic},
	JOURNAL = {Camb. J. Math.},
	FJOURNAL = {Cambridge Journal of Mathematics},
	VOLUME = {7},
	YEAR = {2019},
	NUMBER = {1-2},
	PAGES = {71--217},
	ISSN = {2168-0930},
	MRCLASS = {14G17 (13A35 14F20)},
	MRNUMBER = {3922360},
	MRREVIEWER = {Stefan Schr\"{o}er},
	DOI = {10.4310/CJM.2019.v7.n1.a3},
	URL = {https://doi.org/10.4310/CJM.2019.v7.n1.a3},
}

@misc{Stacks_Project,
  author       = {The {Stacks project authors}},
  title        = {The {S}tacks project},
  howpublished = {\url{https://stacks.math.columbia.edu}},
  year         = {2026},
}

@article {Bhatt_Scholze_Projectivity_of_the_Witt_vector_Grassmannian,
	AUTHOR = {Bhatt, B. and Scholze, P.},
	TITLE = {Projectivity of the {W}itt vector affine {G}rassmannian},
	JOURNAL = {Invent. Math.},
	FJOURNAL = {Inventiones Mathematicae},
	VOLUME = {209},
	YEAR = {2017},
	NUMBER = {2},
	PAGES = {329--423},
	ISSN = {0020-9910},
	MRCLASS = {14F05 (14M15 19G12)},
	MRNUMBER = {3674218},
	MRREVIEWER = {Marc-Hubert Nicole},
	DOI = {10.1007/s00222-016-0710-4},
	URL = {https://doi.org/10.1007/s00222-016-0710-4},
}

@article {Deligne_Illusie_Relevements_modulo_p_2_et_decomposition_du_complexe_de_de_Rham,
	AUTHOR = {Deligne, P. and Illusie, L.},
	TITLE = {Rel\`evements modulo {$p^2$} et d\'{e}composition du complexe de de
	{R}ham},
	JOURNAL = {Invent. Math.},
	FJOURNAL = {Inventiones Mathematicae},
	VOLUME = {89},
	YEAR = {1987},
	NUMBER = {2},
	PAGES = {247--270},
	ISSN = {0020-9910},
	MRCLASS = {14F40 (14C30)},
	MRNUMBER = {894379},
	MRREVIEWER = {Thomas Zink},
	DOI = {10.1007/BF01389078},
	URL = {https://doi.org/10.1007/BF01389078},
}

@incollection {Berthelot_Geometrie_rigide_et_cohomologie_des_varieties_algebriques_de_caracteristique_p,
	AUTHOR = {Berthelot, P.},
	TITLE = {G\'{e}om\'{e}trie rigide et cohomologie des vari\'{e}t\'{e}s alg\'{e}briques de
	caract\'{e}ristique {$p$}},
	NOTE = {Introductions aux cohomologies $p$-adiques (Luminy, 1984)},
	JOURNAL = {M\'{e}m. Soc. Math. France (N.S.)},
	FJOURNAL = {M\'{e}moires de la Soci\'{e}t\'{e} Math\'{e}matique de France. Nouvelle S\'{e}rie},
	NUMBER = {23},
	YEAR = {1986},
	PAGES = {3, 7--32},
	ISSN = {0037-9484},
	MRCLASS = {14F30},
	MRNUMBER = {865810},
	MRREVIEWER = {F. Baldassarri},
}

@article {Langer_Zink_De_Rham_Witt_cohomology_for_a_proper_and_smooth_morphism,
	AUTHOR = {Langer, A. and Zink, T.},
	TITLE = {De {R}ham-{W}itt cohomology for a proper and smooth morphism},
	JOURNAL = {J. Inst. Math. Jussieu},
	FJOURNAL = {Journal of the Institute of Mathematics of Jussieu. JIMJ.
	Journal de l'Institut de Math\'{e}matiques de Jussieu},
	VOLUME = {3},
	YEAR = {2004},
	NUMBER = {2},
	PAGES = {231--314},
	ISSN = {1474-7480},
	MRCLASS = {14F30 (14F40)},
	MRNUMBER = {2055710},
	MRREVIEWER = {Martin C. Olsson},
	DOI = {10.1017/S1474748004000088},
	URL = {https://doi.org/10.1017/S1474748004000088},
}

@article {Gongyo_Takagi_Kollar_injectivity_for_globally_F_regular_varieties,
	AUTHOR = {Gongyo, Y. and Takagi, S.},
	TITLE = {Koll\'{a}r's injectivity theorem for globally {$F$}-regular
	varieties},
	JOURNAL = {Eur. J. Math.},
	FJOURNAL = {European Journal of Mathematics},
	VOLUME = {5},
	YEAR = {2019},
	NUMBER = {3},
	PAGES = {872--880},
	ISSN = {2199-675X},
	MRCLASS = {14F17 (13A35 14G17)},
	MRNUMBER = {3993268},
	MRREVIEWER = {Frank Gounelas},
	DOI = {10.1007/s40879-018-0230-4},
	URL = {https://doi.org/10.1007/s40879-018-0230-4},
}

@article {Kothari_Arbitrarily_large_jumps_in_de_Rham_and_Hodge_coh_of_families_in_char_p,
	AUTHOR = {Kothari, C.},
	TITLE = {Arbitrarily large jumps in the de {R}ham and {H}odge cohomology of families in characteristic p},
	year = {2023},
	note = {Available at \href{https://arxiv.org/abs/2304.07628}{arXiv:2304.07628v2}},
}

@article {Di_Cerbo_Fanelli_Effective_Matsusaka_theorem_for_surfaces_in_char_p,
	AUTHOR = {Di Cerbo, G. and Fanelli, A.},
	TITLE = {Effective {M}atsusaka's theorem for surfaces in characteristic
	{$p$}},
	JOURNAL = {Algebra Number Theory},
	FJOURNAL = {Algebra \& Number Theory},
	VOLUME = {9},
	YEAR = {2015},
	NUMBER = {6},
	PAGES = {1453--1475},
	ISSN = {1937-0652},
	MRCLASS = {14G17 (14J27 14J29)},
	MRNUMBER = {3397408},
	MRREVIEWER = {Stefan Schr\"{o}er},
	DOI = {10.2140/ant.2015.9.1453},
	URL = {https://doi.org/10.2140/ant.2015.9.1453},
}

@article {Ekedahl_Canonical_models_of_surfaces_of_general_type_in_positive_characteristic,
	AUTHOR = {Ekedahl, T.},
	TITLE = {Canonical models of surfaces of general type in positive
	characteristic},
	JOURNAL = {Inst. Hautes \'{E}tudes Sci. Publ. Math.},
	FJOURNAL = {Institut des Hautes \'{E}tudes Scientifiques. Publications
	Math\'{e}matiques},
	NUMBER = {67},
	YEAR = {1988},
	PAGES = {97--144},
	ISSN = {0073-8301},
	MRCLASS = {14J29 (14G15 14J10)},
	MRNUMBER = {972344},
	MRREVIEWER = {Yoichi Miyaoka},
	URL = {http://www.numdam.org/item?id=PMIHES_1988__67__97_0},
}

@article {Kawakami_Tanaka_Vanishing_Theorems_for_Fano_threefolds_in_positive_characteristic,
	AUTHOR = {Kawakami, T. and Tanaka, H.},
	TITLE = {Vanishing theorems for {F}ano threefolds in positive characteristic},
	year = {2024},
	note = {Available at \href{https://arxiv.org/abs/2404.04764}{arXiv:2404.04764v1}},
}

@article {Bernasconi_Martin_Bounding_geom_integral_del_Pezzo_surfaces,
	AUTHOR = {Bernasconi, F. and Martin, G.},
	TITLE = {Bounding geometrically integral del {P}ezzo surfaces},
	JOURNAL = {Forum Math. Sigma},
	FJOURNAL = {Forum of Mathematics. Sigma},
	VOLUME = {12},
	YEAR = {2024},
	PAGES = {Paper No. e81, 24},
	MRCLASS = {14J26 (14D10 14E30 14G17 14J45)},
	MRNUMBER = {4807862},
	DOI = {10.1017/fms.2024.80},
	URL = {https://doi.org/10.1017/fms.2024.80},
}

@article {Kawakami_On_the_KVV_for_log_CY_in_large_char,
	AUTHOR = {Kawakami, T},
	TITLE = {On the {K}awamata-{V}iewheg vanishing theorem for log {C}alabi-{Y}au surfaces in large characteristic},
	year = {2022},
	note = {Available at \href{https://arxiv.org/abs/2211.08751}{arXiv:2211.08751v2}},
}

@article {Tanaka_KVV_for_toric_varieties,
	AUTHOR = {Tanaka, H.},
	TITLE = {Kawamata-{V}iewheg vanishing for toric varieties},
	year = {2022},
	note = {Available at \href{https://arxiv.org/abs/2208.09680}{arXiv:2208.09680v3}},
	}

@article {Mehta_Ramanathan_Frobenius_splitting_and_cohomology_vanishing_for_Schubert_varieties,
	AUTHOR = {Mehta, V. B. and Ramanathan, A.},
	TITLE = {Frobenius splitting and cohomology vanishing for {S}chubert
	varieties},
	JOURNAL = {Ann. of Math. (2)},
	FJOURNAL = {Annals of Mathematics. Second Series},
	VOLUME = {122},
	YEAR = {1985},
	NUMBER = {1},
	PAGES = {27--40},
	ISSN = {0003-486X},
	MRCLASS = {14M15 (20G10)},
	MRNUMBER = {799251},
	MRREVIEWER = {H. H. Andersen},
	DOI = {10.2307/1971368},
	URL = {https://doi.org/10.2307/1971368},
}

@article {Gongyo_Nakamura_Tanaka_Rational_points_on_log_fano_threefolds_over_a_finite_field,
	AUTHOR = {Gongyo, Y. and Nakamura, Y. and Tanaka, H.},
	TITLE = {Rational points on log {F}ano threefolds over a finite field},
	JOURNAL = {J. Eur. Math. Soc. (JEMS)},
	FJOURNAL = {Journal of the European Mathematical Society (JEMS)},
	VOLUME = {21},
	YEAR = {2019},
	NUMBER = {12},
	PAGES = {3759--3795},
	ISSN = {1435-9855},
	MRCLASS = {14J45 (14E30 14G05)},
	MRNUMBER = {4022715},
	MRREVIEWER = {Marta Pieropan},
	DOI = {10.4171/JEMS/913},
	URL = {https://doi.org/10.4171/JEMS/913},
}

@article {Patakfalvi_Zdanowicz_Ordinary_varieties_with_trivial_canonical_bundle_are_not_uniruled,
	AUTHOR = {Patakfalvi, Zs. and Zdanowicz, M.},
	TITLE = {Ordinary varieties with trivial canonical bundle are not
	uniruled},
	JOURNAL = {Math. Ann.},
	FJOURNAL = {Mathematische Annalen},
	VOLUME = {380},
	YEAR = {2021},
	NUMBER = {3-4},
	PAGES = {1767--1799},
	ISSN = {0025-5831},
	MRCLASS = {14G17 (14J45 14M17 14M25)},
	MRNUMBER = {4297199},
	MRREVIEWER = {Ana Bravo},
	DOI = {10.1007/s00208-021-02165-y},
	URL = {https://doi.org/10.1007/s00208-021-02165-y},
}

@incollection {Raynaud_Failure_Kodaira_vanishing,
	AUTHOR = {Raynaud, M.},
	TITLE = {Contre-exemple au ``vanishing theorem'' en caract\'{e}ristique
	{$p>0$}},
	BOOKTITLE = {C. {P}. {R}amanujam---a tribute},
	SERIES = {Tata Inst. Fundam. Res. Stud. Math.},
	VOLUME = {8},
	PAGES = {273--278},
	PUBLISHER = {Springer, Berlin-New York},
	YEAR = {1978},
	MRCLASS = {14F12 (14J10 14J25)},
	MRNUMBER = {541027},
	MRREVIEWER = {Daniel Comenetz},
}

@article {Totaro_The_failure_of_KV_and_terminal_singularities_that_are_not_CM,
	AUTHOR = {Totaro, B.},
	TITLE = {The failure of {K}odaira vanishing for {F}ano varieties, and
	terminal singularities that are not {C}ohen-{M}acaulay},
	JOURNAL = {J. Algebraic Geom.},
	FJOURNAL = {Journal of Algebraic Geometry},
	VOLUME = {28},
	YEAR = {2019},
	NUMBER = {4},
	PAGES = {751--771},
}

@article{Totaro_Terminal_3folds_that_are_not_CM,
	TITLE={Terminal {3}-folds that are not {C}ohen-{M}acaulay},
	AUTHOR={Totaro, B.},
	YEAR={2024},
	note = {Available at \href{https://arxiv.org/abs/2407.02608}{arXiv:2407.02608v2}}
}

@article {Berthelot_Esnault_Rulling_Rational_points_over_finite_fields_for_regular_modules_of_algebraic_varieties_of_Hodge_type,
	AUTHOR = {Berthelot, P. and Esnault, H. and R\"{u}lling, K.},
	TITLE = {Rational points over finite fields for regular models of
	algebraic varieties of {H}odge type {$\geq1$}},
	JOURNAL = {Ann. of Math. (2)},
	FJOURNAL = {Annals of Mathematics. Second Series},
	VOLUME = {176},
	YEAR = {2012},
	NUMBER = {1},
	PAGES = {413--508},
	ISSN = {0003-486X},
	MRCLASS = {14G05 (14F30 14G20)},
	MRNUMBER = {2925388},
	MRREVIEWER = {Lei Fu},
	DOI = {10.4007/annals.2012.176.1.8},
	URL = {https://doi.org/10.4007/annals.2012.176.1.8},
}

@article {Berthelot_Bloch_Esnault_On_Witt_vector_cohomology_for_singular_varieties,
	AUTHOR = {Berthelot, P. and Bloch, S. and Esnault, H.},
	TITLE = {On {W}itt vector cohomology for singular varieties},
	JOURNAL = {Compos. Math.},
	FJOURNAL = {Compositio Mathematica},
	VOLUME = {143},
	YEAR = {2007},
	NUMBER = {2},
	PAGES = {363--392},
	ISSN = {0010-437X},
	MRCLASS = {14F30 (11G25 14G05)},
	MRNUMBER = {2309991},
	MRREVIEWER = {Nobuo Tsuzuki},
	DOI = {10.1112/S0010437X06002533},
	URL = {https://doi.org/10.1112/S0010437X06002533},
}

@article {de_Jong_smoothness_semistability_and_alterations,
	AUTHOR = {de Jong, A. J.},
	TITLE = {Smoothness, semi-stability and alterations},
	JOURNAL = {Inst. Hautes \'{E}tudes Sci. Publ. Math.},
	FJOURNAL = {Institut des Hautes \'{E}tudes Scientifiques. Publications
	Math\'{e}matiques},
	NUMBER = {83},
	YEAR = {1996},
	PAGES = {51--93},
	ISSN = {0073-8301},
	MRCLASS = {14E15 (14B05 14H10)},
	MRNUMBER = {1423020},
	MRREVIEWER = {Marko Roczen},
	URL = {http://www.numdam.org/item?id=PMIHES_1996__83__51_0},
}

@article {de_Jong_Families_of_curves_and_alterations,
	AUTHOR = {de Jong, A. J.},
	TITLE = {Families of curves and alterations},
	JOURNAL = {Ann. Inst. Fourier (Grenoble)},
	FJOURNAL = {Universit\'{e} de Grenoble. Annales de l'Institut Fourier},
	VOLUME = {47},
	YEAR = {1997},
	NUMBER = {2},
	PAGES = {599--621},
	ISSN = {0373-0956},
	MRCLASS = {14H10 (14E15 14L30)},
	MRNUMBER = {1450427},
	MRREVIEWER = {D.-M. Popescu},
	DOI = {10.5802/aif.1575},
	URL = {https://doi.org/10.5802/aif.1575},
}

@article {Blickle_Esnault_Rational_Singularities_and_rational_points,
	AUTHOR = {Blickle, M. and Esnault, H.},
	TITLE = {Rational singularities and rational points},
	JOURNAL = {Pure Appl. Math. Q.},
	FJOURNAL = {Pure and Applied Mathematics Quarterly},
	VOLUME = {4},
	YEAR = {2008},
	NUMBER = {3, Special Issue: In honor of Fedor Bogomolov. Part 2},
	PAGES = {729--741},
	ISSN = {1558-8599},
	MRCLASS = {14F30 (14B05 14C15 14G05 14G15)},
	MRNUMBER = {2435842},
	MRREVIEWER = {Kay M. R\"{u}lling},
	DOI = {10.4310/PAMQ.2008.v4.n3.a5},
	URL = {https://doi.org/10.4310/PAMQ.2008.v4.n3.a5},
}

@article {Kovacs_Characteristion_of_rational_singularities,
	AUTHOR = {Kov\'{a}cs, S. J.},
	TITLE = {A characterization of rational singularities},
	JOURNAL = {Duke Math. J.},
	FJOURNAL = {Duke Mathematical Journal},
	VOLUME = {102},
	YEAR = {2000},
	NUMBER = {2},
	PAGES = {187--191},
	ISSN = {0012-7094},
	MRCLASS = {14B05 (14E15 14E30)},
	MRNUMBER = {1749436},
	MRREVIEWER = {Tohsuke Urabe},
	DOI = {10.1215/S0012-7094-00-10221-9},
	URL = {https://doi.org/10.1215/S0012-7094-00-10221-9},
}

@article {Grauert_Riemenschneider_vanishing,
	AUTHOR = {Grauert, H. and Riemenschneider, O.},
	TITLE = {Verschwindungss\"{a}tze f\"{u}r analytische {K}ohomologiegruppen auf
	komplexen {R}\"{a}umen},
	JOURNAL = {Invent. Math.},
	FJOURNAL = {Inventiones Mathematicae},
	VOLUME = {11},
	YEAR = {1970},
	PAGES = {263--292},
	ISSN = {0020-9910},
	MRCLASS = {32C35 (32L10)},
	MRNUMBER = {302938},
	MRREVIEWER = {H. B. Laufer},
	DOI = {10.1007/BF01403182},
	URL = {https://doi.org/10.1007/BF01403182},
}

@article {Elkik_Singularites_rationnelles_et_deformations,
	AUTHOR = {Elkik, R.},
	TITLE = {Singularit\'{e}s rationnelles et d\'{e}formations},
	JOURNAL = {Invent. Math.},
	FJOURNAL = {Inventiones Mathematicae},
	VOLUME = {47},
	YEAR = {1978},
	NUMBER = {2},
	PAGES = {139--147},
}

@article {Esnault_Viewheg_Two_dimensionaL_quotient_singularities_deform_to_quotient_singularities,
	AUTHOR = {Esnault, H. and Viehweg, E.},
	TITLE = {Two-dimensional quotient singularities deform to quotient
	singularities},
	JOURNAL = {Math. Ann.},
	FJOURNAL = {Mathematische Annalen},
	VOLUME = {271},
	YEAR = {1985},
	NUMBER = {3},
	PAGES = {439--449},
}

@article {Elkik_Rationalite_des_singularites_canoniques,
	AUTHOR = {Elkik, R.},
	TITLE = {Rationalit\'{e} des singularit\'{e}s canoniques},
	JOURNAL = {Invent. Math.},
	FJOURNAL = {Inventiones Mathematicae},
	VOLUME = {64},
	YEAR = {1981},
	NUMBER = {1},
	PAGES = {1--6},
}

@article {Tanaka_Vanishing_theorems_of_Kodaira_type_for_Witt_canonical_sheaves,
	AUTHOR = {Tanaka, H.},
	TITLE = {Vanishing theorems of {K}odaira type for {W}itt canonical
	sheaves},
	JOURNAL = {Selecta Math. (N.S.)},
	FJOURNAL = {Selecta Mathematica. New Series},
	VOLUME = {28},
	YEAR = {2022},
	NUMBER = {1},
	PAGES = {Paper No. 12, 50},
	ISSN = {1022-1824},
	MRCLASS = {14F30 (14F17)},
	MRNUMBER = {4346509},
	MRREVIEWER = {Haohao Wang},
	DOI = {10.1007/s00029-021-00736-0},
	URL = {https://doi.org/10.1007/s00029-021-00736-0},
}

@article {Esnault_Varieties_over_a_finite_field_with_trivial_Chow_group_of_0_cycles_have_a_rational_point,
	AUTHOR = {Esnault, H.},
	TITLE = {Varieties over a finite field with trivial {C}how group of
	0-cycles have a rational point},
	JOURNAL = {Invent. Math.},
	FJOURNAL = {Inventiones Mathematicae},
	VOLUME = {151},
	YEAR = {2003},
	NUMBER = {1},
	PAGES = {187--191},
	ISSN = {0020-9910},
	MRCLASS = {14C25 (14F30 14G05 14G15)},
	MRNUMBER = {1943746},
	MRREVIEWER = {Jean-Claude Douai},
	DOI = {10.1007/s00222-002-0261-8},
	URL = {https://doi.org/10.1007/s00222-002-0261-8},
}

@article {Bloch_Algebraic_K_theory_and_crys_cohomology,
	AUTHOR = {Bloch, S.},
	TITLE = {Algebraic {$K$}-theory and crystalline cohomology},
	JOURNAL = {Inst. Hautes \'{E}tudes Sci. Publ. Math.},
	FJOURNAL = {Institut des Hautes \'{E}tudes Scientifiques. Publications
	Math\'{e}matiques},
	NUMBER = {47},
	YEAR = {1977},
	PAGES = {187--268 (1978)},
	ISSN = {0073-8301},
	MRCLASS = {14F30 (14C35 14L05 18F25)},
	MRNUMBER = {488288},
	URL = {http://www.numdam.org/item?id=PMIHES_1977__47__187_0},
}

@article {Illusie_Complexe_de_de_Rham_Witt_et_cohomologie_cristalline,
	AUTHOR = {Illusie, L.},
	TITLE = {Complexe de de {R}ham-{W}itt et cohomologie cristalline},
	JOURNAL = {Ann. Sci. \'{E}cole Norm. Sup. (4)},
	FJOURNAL = {Annales Scientifiques de l'\'{E}cole Normale Sup\'{e}rieure. Quatri\`eme
	S\'{e}rie},
	VOLUME = {12},
	YEAR = {1979},
	NUMBER = {4},
	PAGES = {501--661},
	ISSN = {0012-9593},
	MRCLASS = {14F30},
	MRNUMBER = {565469},
	MRREVIEWER = {William E. Lang},
	URL = {http://www.numdam.org/item?id=ASENS_1979_4_12_4_501_0},
}

@article {Abe_Caro_Theory_of_weights_in_p_adic_cohomology,
	AUTHOR = {Abe, T. and Caro, D.},
	TITLE = {Theory of weights in {$p$}-adic cohomology},
	JOURNAL = {Amer. J. Math.},
	FJOURNAL = {American Journal of Mathematics},
	VOLUME = {140},
	YEAR = {2018},
	NUMBER = {4},
	PAGES = {879--975},
	ISSN = {0002-9327},
	MRCLASS = {14F10 (14F30)},
	MRNUMBER = {3828038},
	MRREVIEWER = {Piotr Achinger},
	DOI = {10.1353/ajm.2018.0021},
	URL = {https://doi.org/10.1353/ajm.2018.0021},
}

@article{Quasi_F_splittings_III,
	TITLE={Quasi-{F}-splittings in birational geometry {III}},
	AUTHOR={Kawakami, T. and Takamatsu, T. and Tanaka, H. and Witaszek, J. and Yobuko, F. and Yoshikawa, S.},
	YEAR={2024},
	note = {Available at \href{https://arxiv.org/abs/2408.01921}{arXiv:2408.01921v1}},
}

@article{Lazda_Rigidification_of_arith_D_mods_and_overconv_RH_corr,
	TITLE={Rigidification of arithmetic {D}-modules and an overconvergent {R}iemann-{H}ilbert correspondence},
	AUTHOR={Lazda, C.},
	YEAR={2023},
	note = {Available at \href{https://arxiv.org/abs/2304.07181}{arXiv:2304.07181v1}},
}

@article{Noot_Huyghe_D_affinite_de_l_espace_projectif,
	TITLE={{$\mathcal{D}^{\dagger}$}-affinit{\'e} de l'espace projectif},
	AUTHOR={Noot-Huyghe, C.},
	YEAR={1997},
	JOURNAL={Compositio Math.},
	FJOURNAL = {Compositio Mathematica},
	VOLUME = {108},
	NUMBER = {3},
	PAGES = {277--318},
	note = {With an appendix by P. Berthelot},
}

@article {Abe_Around_the_nearby_cycle_functor_for_arithmetic_D-modules,
	AUTHOR = {Abe, T.},
	TITLE = {Around the nearby cycle functor for arithmetic {D}-modules},
	JOURNAL = {Nagoya Math. J.},
	FJOURNAL = {Nagoya Mathematical Journal},
	VOLUME = {236},
	YEAR = {2019},
	PAGES = {1--28},
	ISSN = {0027-7630},
	MRCLASS = {14F10 (14F30)},
	MRNUMBER = {4094411},
	MRREVIEWER = {Haoyu Hu},
	DOI = {10.1017/nmj.2019.23},
	URL = {https://doi.org/10.1017/nmj.2019.23},
}

@article {Abe_Langlands_correspondence_for_isocrystals,
	AUTHOR = {Abe, T.},
	TITLE = {Langlands correspondence for isocrystals and the existence of
	crystalline companions for curves},
	JOURNAL = {J. Amer. Math. Soc.},
	FJOURNAL = {Journal of the American Mathematical Society},
	VOLUME = {31},
	YEAR = {2018},
	NUMBER = {4},
	PAGES = {921--1057},
	ISSN = {0894-0347},
	MRCLASS = {14F30 (11R39 11S37)},
	MRNUMBER = {3836562},
	MRREVIEWER = {Sazzad Ali Biswas},
	DOI = {10.1090/jams/898},
	URL = {https://doi.org/10.1090/jams/898},
}

@article {Nakkajima_Weight_filtration_and_slope_filtration,
	AUTHOR = {Nakkajima, Y.},
	TITLE = {Weight filtration and slope filtration on the rigid cohomology
	of a variety in characteristic {$p>0$}},
	JOURNAL = {M\'{e}m. Soc. Math. Fr. (N.S.)},
	FJOURNAL = {M\'{e}moires de la Soci\'{e}t\'{e} Math\'{e}matique de France. Nouvelle S\'{e}rie},
	NUMBER = {130-131},
	YEAR = {2012},
	PAGES = {vi+250},
	ISSN = {0249-633X},
	ISBN = {978-2-85629-376-8},
	MRCLASS = {14F30 (14F05 14G17)},
	MRNUMBER = {3136733},
	MRREVIEWER = {Bruno Chiarellotto},
	DOI = {10.24033/msmf.441},
	URL = {https://doi.org/10.24033/msmf.441},
}

@article {Tsuzuki_Cohomological_descent_of_rigid_cohomology_for_proper_coverings,
	AUTHOR = {Tsuzuki, N.},
	TITLE = {Cohomological descent of rigid cohomology for proper
	coverings},
	JOURNAL = {Invent. Math.},
	FJOURNAL = {Inventiones Mathematicae},
	VOLUME = {151},
	YEAR = {2003},
	NUMBER = {1},
	PAGES = {101--133},
	ISSN = {0020-9910},
	MRCLASS = {14F30 (14F40 14G22)},
	MRNUMBER = {1943743},
	MRREVIEWER = {Elmar Grosse-Kl\"{o}nne},
	DOI = {10.1007/s00222-002-0252-9},
	URL = {https://doi.org/10.1007/s00222-002-0252-9},
}

@book {Le_Stum_Rigid_cohomology,
	AUTHOR = {Le Stum, B.},
	TITLE = {Rigid cohomology},
	SERIES = {Cambridge Tracts in Mathematics},
	VOLUME = {172},
	PUBLISHER = {Cambridge University Press, Cambridge},
	YEAR = {2007},
	PAGES = {xvi+319},
	ISBN = {978-0-521-87524-0},
	MRCLASS = {14F30 (14G22)},
	MRNUMBER = {2358812},
	MRREVIEWER = {Lorenzo Ramero},
	DOI = {10.1017/CBO9780511543128},
	URL = {https://doi.org/10.1017/CBO9780511543128},
}

@article {Berthelot_Dualite_Poincare_et_Kunneth_en_cohom_rigide,
	AUTHOR = {Berthelot, P.},
	TITLE = {Dualit\'{e} de {P}oincar\'{e} et formule de {K}\"{u}nneth en cohomologie
	rigide},
	JOURNAL = {C. R. Acad. Sci. Paris S\'{e}r. I Math.},
	FJOURNAL = {Comptes Rendus de l'Acad\'{e}mie des Sciences. S\'{e}rie I.
	Math\'{e}matique},
	VOLUME = {325},
	YEAR = {1997},
	NUMBER = {5},
	PAGES = {493--498},
	ISSN = {0764-4442},
	MRCLASS = {14F30},
	MRNUMBER = {1692313},
	DOI = {10.1016/S0764-4442(97)88895-7},
	URL = {https://doi.org/10.1016/S0764-4442(97)88895-7},
}

@article {Kollar_Relative_MMP_without_Q_factoriality,
	AUTHOR = {Koll\'{a}r, J.},
	TITLE = {Relative {MMP} without {$\mathbb{Q}$}-factoriality},
	JOURNAL = {Electron. Res. Arch.},
	FJOURNAL = {Electronic Research Archive},
	VOLUME = {29},
	YEAR = {2021},
	NUMBER = {5},
	PAGES = {3193--3203},
}

@article {Rulling_Chatsistamatiaou_Higher_direct_image_in_positive_characteristic,
	AUTHOR = {Chatzistamatiou, A. and R\"{u}lling, K.},
	TITLE = {Higher direct images of the structure sheaf in positive
	characteristic},
	JOURNAL = {Algebra Number Theory},
	FJOURNAL = {Algebra \& Number Theory},
	VOLUME = {5},
	YEAR = {2011},
	NUMBER = {6},
	PAGES = {693--775},
	ISSN = {1937-0652},
	MRCLASS = {14E05 (14C17)},
	MRNUMBER = {2923726},
	MRREVIEWER = {Claudio Pedrini},
	DOI = {10.2140/ant.2011.5.693},
	URL = {https://doi.org/10.2140/ant.2011.5.693},
}

@article {Ekedahl_Duality_Hodge_Witt,
	AUTHOR = {Ekedahl, T.},
	TITLE = {On the multiplicative properties of the de {R}ham-{W}itt
	complex. {I}},
	JOURNAL = {Ark. Mat.},
	FJOURNAL = {Arkiv f\"{o}r Matematik},
	VOLUME = {22},
	YEAR = {1984},
	NUMBER = {2},
	PAGES = {185--239},
	ISSN = {0004-2080},
	MRCLASS = {14F30},
	MRNUMBER = {765411},
	MRREVIEWER = {F. Baldassarri},
	DOI = {10.1007/BF02384380},
	URL = {https://doi.org/10.1007/BF02384380},
}

@article {Gros_Classes_de_Chern_et_classes_de_cycles_en_coho_de_hodge_witt_log,
	AUTHOR = {Gros, M.},
	TITLE = {Classes de {C}hern et classes de cycles en cohomologie de
	{H}odge-{W}itt logarithmique},
	JOURNAL = {M\'{e}m. Soc. Math. France (N.S.)},
	FJOURNAL = {M\'{e}moires de la Soci\'{e}t\'{e} Math\'{e}matique de France. Nouvelle S\'{e}rie},
	NUMBER = {21},
	YEAR = {1985},
	PAGES = {87},
	ISSN = {0037-9484},
	MRCLASS = {14F30},
	MRNUMBER = {844488},
	MRREVIEWER = {Thomas Zink},
	URL = {https://eudml.org/doc/94860},
}

@article {Berthelot_D_modules_arithmetiques_I,
	AUTHOR = {Berthelot, P.},
	TITLE = {{${\scr D}$}-modules arithm\'{e}tiques. {I}. {O}p\'{e}rateurs
	diff\'{e}rentiels de niveau fini},
	JOURNAL = {Ann. Sci. \'{E}cole Norm. Sup. (4)},
	FJOURNAL = {Annales Scientifiques de l'\'{E}cole Normale Sup\'{e}rieure. Quatri\`eme
	S\'{e}rie},
	VOLUME = {29},
	YEAR = {1996},
	NUMBER = {2},
	PAGES = {185--272},
	ISSN = {0012-9593},
	MRCLASS = {14F10 (14F30 14G20)},
	MRNUMBER = {1373933},
	MRREVIEWER = {Johan de Jong},
	URL = {http://www.numdam.org/item?id=ASENS_1996_4_29_2_185_0},
}

@article {Berthelot_D_modules_arithmetiques_II,
	AUTHOR = {Berthelot, P.},
	TITLE = {{$\scr D$}-modules arithm\'{e}tiques. {II}. {D}escente par
	{F}robenius},
	JOURNAL = {M\'{e}m. Soc. Math. Fr. (N.S.)},
	FJOURNAL = {M\'{e}moires de la Soci\'{e}t\'{e} Math\'{e}matique de France. Nouvelle S\'{e}rie},
	NUMBER = {81},
	YEAR = {2000},
	PAGES = {vi+136},
	ISSN = {0249-633X},
	MRCLASS = {14F10 (14F30 14G20)},
	MRNUMBER = {1775613},
	MRREVIEWER = {Abdellah Mokrane},
}

@article {Caro_D_modules_arithmetiques_surholonomes,
	AUTHOR = {Caro, D.},
	TITLE = {{$\scr D$}-modules arithm\'{e}tiques surholonomes},
	JOURNAL = {Ann. Sci. \'{E}c. Norm. Sup\'{e}r. (4)},
	FJOURNAL = {Annales Scientifiques de l'\'{E}cole Normale Sup\'{e}rieure. Quatri\`eme
	S\'{e}rie},
	VOLUME = {42},
	YEAR = {2009},
	NUMBER = {1},
	PAGES = {141--192},
	ISSN = {0012-9593},
	MRCLASS = {14F10 (14G15)},
	MRNUMBER = {2518895},
	MRREVIEWER = {Luisa Fiorot},
	DOI = {10.24033/asens.2092},
	URL = {https://doi.org/10.24033/asens.2092},
}

@article {Caro_Tsuzuki_Overholonomicity_of_overconvergent_F_isocrystals_over_smooth_varieties,
	AUTHOR = {Caro, D. and Tsuzuki, N.},
	TITLE = {Overholonomicity of overconvergent {$F$}-isocrystals over
	smooth varieties},
	JOURNAL = {Ann. of Math. (2)},
	FJOURNAL = {Annals of Mathematics. Second Series},
	VOLUME = {176},
	YEAR = {2012},
	NUMBER = {2},
	PAGES = {747--813},
	ISSN = {0003-486X},
	MRCLASS = {14F30 (14F10)},
	MRNUMBER = {2950764},
	MRREVIEWER = {Adolfo Quir\'{o}s},
	DOI = {10.4007/annals.2012.176.2.2},
	URL = {https://doi.org/10.4007/annals.2012.176.2.2},
}

@article {Caro_Stabilite_de_l_holonomie_sur_les_vars_quasi_projs,
	AUTHOR = {Caro, D.},
	TITLE = {Stabilit\'{e} de l'holonomie sur les vari\'{e}t\'{e}s quasi-projectives},
	JOURNAL = {Compos. Math.},
	FJOURNAL = {Compositio Mathematica},
	VOLUME = {147},
	YEAR = {2011},
	NUMBER = {6},
	PAGES = {1772--1792},
	ISSN = {0010-437X},
	MRCLASS = {14F10 (14F30)},
	MRNUMBER = {2862062},
	DOI = {10.1112/S0010437X11005574},
	URL = {https://doi.org/10.1112/S0010437X11005574},
}

\Addresses

\end{document}